\theoremstyle{plain}
\newtheorem{thm}{Theorem}[section]
\newtheorem{prop}[thm]{Proposition}
\newtheorem{lemma}[thm]{Lemma}
\newtheorem{claim}[thm]{Claim}
\newtheorem*{claim*}{Claim}
\theoremstyle{definition}
\newtheorem{defi}[thm]{Definition}
\newtheorem{obs}[thm]{Remark}
\newtheorem{ex}[thm]{Example}
\newtheorem*{defi*}{Definition}
\theoremstyle{plain}
\newtheorem*{namedthm}{\namedthmname}
\newcounter{namedthm}
\newenvironment{named}[1]
{\def\namedthmname{#1}%
	\refstepcounter{namedthm}%
	\namedthm\def\@currentlabel{#1}}
{\endnamedthm}
\def\dist{\mathrm{dist}}
\title{Boundaries of hyperbolic and simply parabolic Baker domains}
\author[1]{Anna Jov\'e\thanks{This work is supported by the Spanish government grants FPI PRE2021-097372 and PID2023-147252NB-I00. Corresponding author. \url{ajovecam7@alumnes.ub.edu}.}}
\affil[1]{\small Departament de Matemàtiques i Informàtica, Universitat de Barcelona, Barcelona, Spain}
\date{\today}
\begin{document}
\maketitle
\begin{abstract}
	We study the boundaries of non-univalent simply connected Baker domains of transcendental maps (both entire and meromorphic), of hyperbolic and simply parabolic type. We prove non-ergodicity and non-recurrence for the boundary map, and additional properties concerning the Julia set and the set of singularities of the associated inner function, and the topology and the dynamics on the boundary of the Baker domain. In particular, we prove the existence of points on the boundary whose orbit does not converge to  infinity through the dynamical access, in the sense of Carathéodory. Finally, under mild conditions on the postsingular set, we prove the existence of periodic points on the boundary of such Baker domains.
\end{abstract}

\section{Introduction}

Consider the discrete  dynamical system generated by  a transcendental meromorphic map $ f \colon\mathbb{C}\to\widehat{\mathbb{C}} $, i.e.   the sequences of iterates $ \left\lbrace f^n(z)\right\rbrace _n $, where $ z\in{\mathbb{C}} $.  These dynamical systems arise naturally, for example, from the popular Newton’s root-finding method applied to entire functions. For general background on the iteration of meromorphic maps, we refer to \cite{bergweiler93}. Since the class of meromorphic maps is not closed under composition, and hence unsuitable to deal with iteration, 	we shall work with functions in class $ \mathbb{K} $, which are meromorphic outside a compact countable set of singularities, and for which the general theory of iteration of meromorphic maps extends successfully  (see e.g. \cite{Bolsch-thesis, BakerDominguezHerring}). Formally, $ f\in\mathbb{K} $ if there exists a compact countable set $ E(f)\subset \widehat{\mathbb{C}} $ such that \[f\colon\widehat{\mathbb{C}}\smallsetminus E(f)\to \widehat{\mathbb{C}} \] is meromorphic in $ \Omega\coloneqq  \widehat{\mathbb{C}}\smallsetminus E(f)$ but in no larger set. 

In this situation, the Riemann sphere $ \widehat{\mathbb{C}} $, regarded as the phase space of the dynamical system, is divided into two totally invariant sets: the {\em Fatou set} $ \mathcal{F}(f) $,  the set of points $ z\in\mathbb{C} $ such that $ \left\lbrace f^n\right\rbrace _{n\in\mathbb{N}} $ is well-defined and forms a normal family in some neighbourhood of $ z$; and the {\em Julia set} $ \mathcal{J}(f) $, its complement, where the dynamics is chaotic. By definition, the set of  singularities $ E(f) $ belongs to the Julia set.

The Fatou set is open and consists in general of infinitely many components,  called \textit{Fatou components}. Due to the invariance of the Fatou and Julia sets, Fatou components are either periodic, preperiodic or wandering. Periodic Fatou components, which can be assumed  to be invariant, are classified into \textit{rotation domains} (\textit{Siegel disks} or \textit{Herman rings}), \textit{attracting} and \textit{parabolic basins}, and \textit{Baker domains}, being the latter exclusive of transcendental functions. Indeed, a Baker domain is, by definition, an invariant  Fatou component in which iterates converge to an essential singularity of $ f $ uniformly on compact sets. Conjugating $ f $ by a Möbius transformation if needed, we shall assume that this essential singularity is placed at $ \infty $.

A standard approach to study invariant simply connected Fatou components is to conjugate the dynamics in the Fatou component $ U $ to the dynamics of an inner function of the unit disk $ \mathbb{D} $. Indeed, consider a Riemann map $ \varphi\colon\mathbb{D}\to U $, and the function \[g\colon\mathbb{D}\to\mathbb{D}, \hspace{0.5cm} g\coloneqq \varphi^{-1}\circ f\circ \varphi.\]The map $ g $ is the {\em inner function associated with $ f|_U $}, and it is unique up to conjugacy with automorphisms of $ \mathbb{D} $.

The dynamics of $ g|_{\mathbb{D}} $, and hence of $ f|_U $, follows essentially from the Denjoy-Wolff Theorem, which asserts the existence of a distinguished point $ p\in\overline{\mathbb{D}} $ towards which all points converge under iteration, except in the case when $ g $ is conjugate to a rotation (the case of Siegel disks). Note that $ p\in\mathbb{D} $ if $ U $ is an attracting basin (and $ g $ is called {\em elliptic}), and $ p\in\partial \mathbb{D} $ if $ U $ is a parabolic basin or a Baker domain.  

Holomorphic self-maps of $ \mathbb{D} $, $ g\colon\mathbb{D}\to\mathbb{D} $,
with Denjoy-Wolff point $ p\in \partial\mathbb{D} $ are classified into \textit{doubly parabolic}, \textit{hyperbolic} and \textit{simply parabolic}, according to the dynamics near the Denjoy-Wolff point,  following a celebrated result by Cowen \cite{cowen}. In the particular case when the map extends analytically to a neighbourhood of $ p $ (and thus $ p $ is fixed by $ g $), the previous classification depends on the character of $ p $ as a fixed point of $ g $. More preciely,  $ g $ is {\em hyperbolic} if if $ p $ is an attracting fixed point; $ g $ is {\em simply parabolic} if parabolic fixed point with a single petal;
 and $ g $ is {\em doubly parabolic} if $ p $ is a parabolic fixed point with two petals. One should keep in mind that, however, that Cowen's result does not require the holomorphic extension around the Denjoy-Wolff point, and it is stated in terms of internal dynamics in a one-sided neighbourhood of the Denjoy-Wolff point. For more details, see Section \ref{sect-inner}, and specifically, Theorem \ref{teo-cowen}. 
 
 \begin{figure}[htb!]\centering
	\captionsetup[subfigure]{labelformat=empty}
	\hfill
 	\begin{subfigure}[b]{0.27\textwidth}
 		\includegraphics[width=\textwidth]{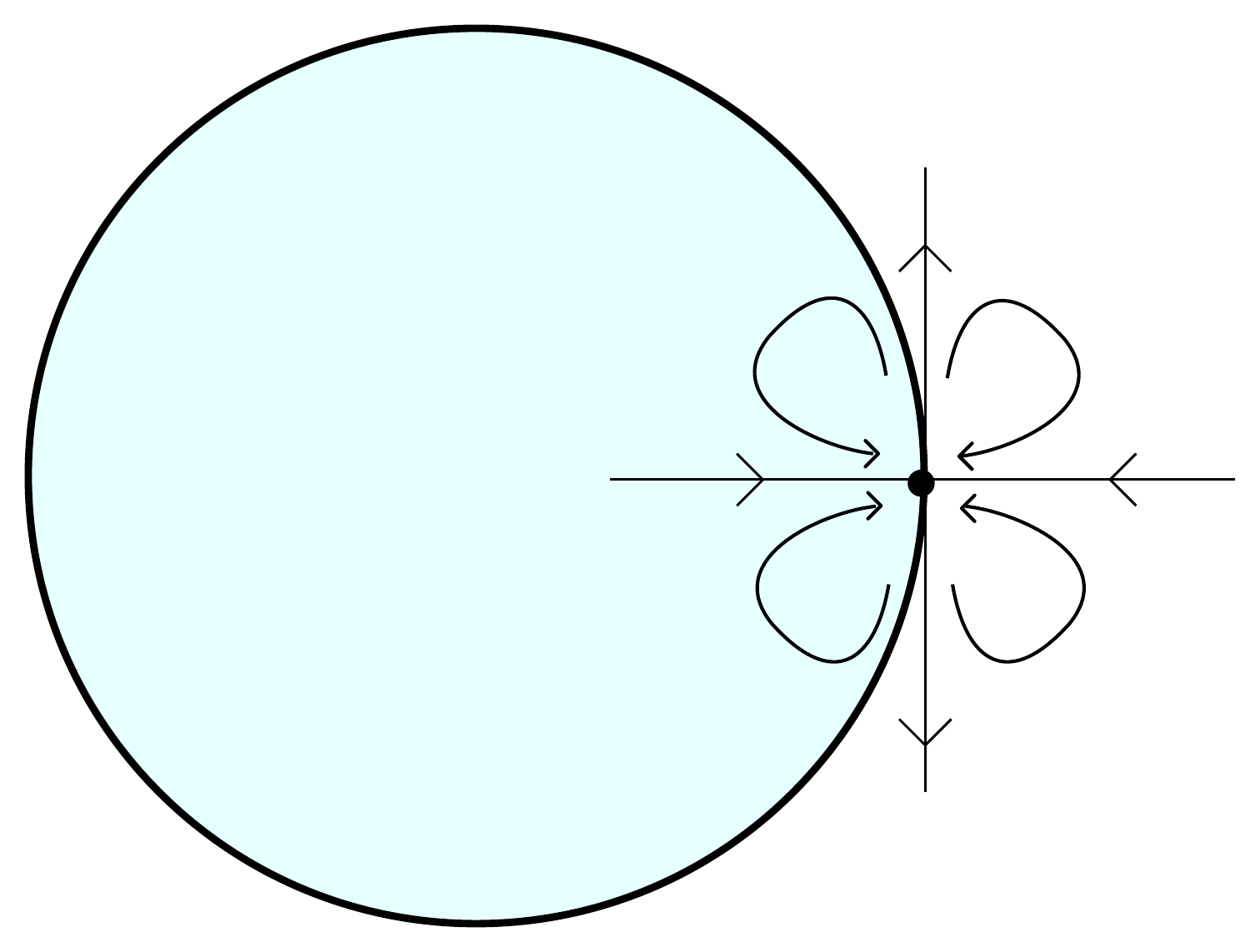}
 		\caption{\footnotesize Doubly parabolic}
 		
 	\end{subfigure}
  	 \hfill
  	 \hfill
 	\begin{subfigure}[b]{0.27\textwidth}
 		\includegraphics[width=\textwidth, ]{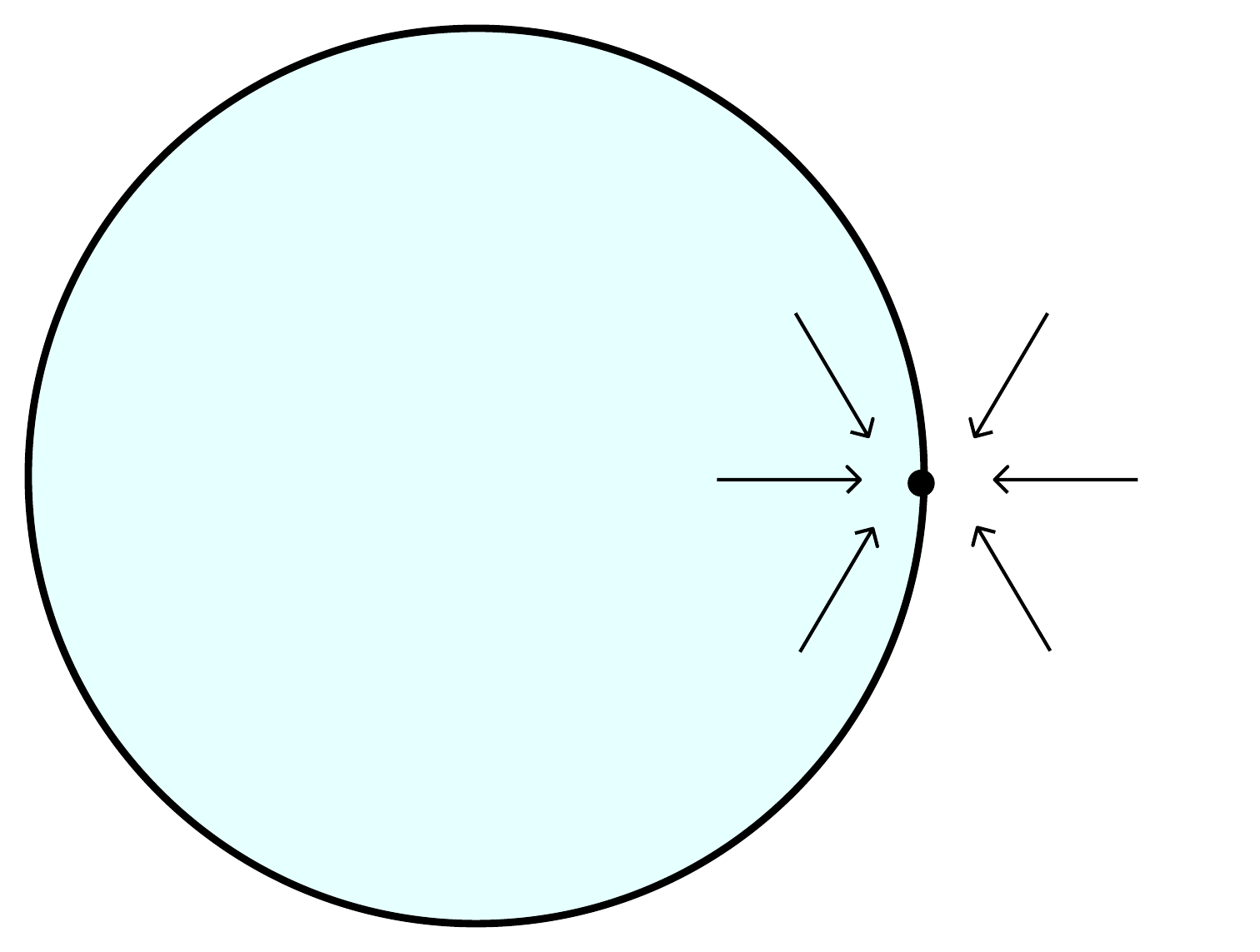}
 		\caption{\footnotesize Hyperbolic}
 		
 	\end{subfigure}
 \hfill
 \hfill
 	\begin{subfigure}[b]{0.27\textwidth}
 		\includegraphics[width=\textwidth]{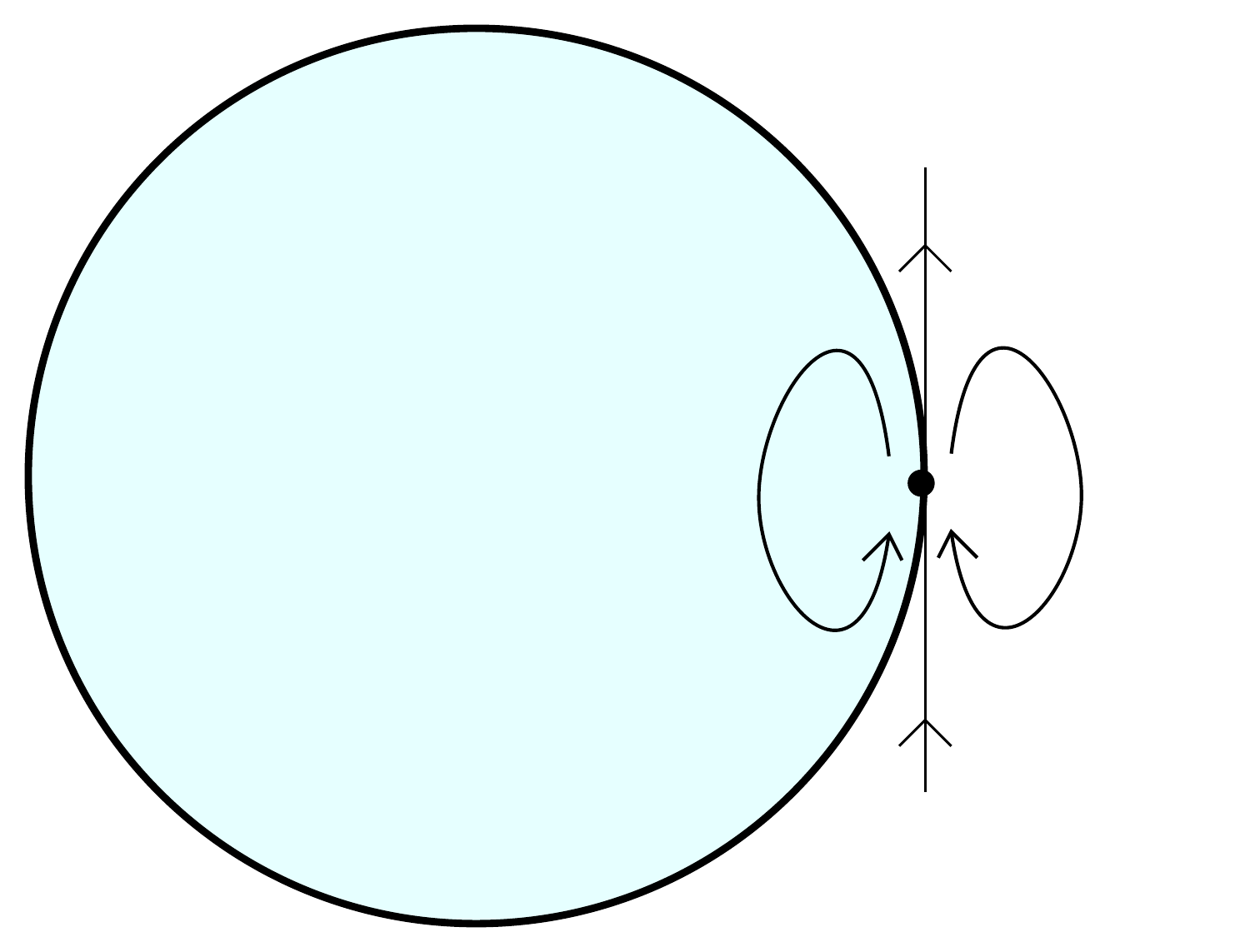}
 		\caption{\footnotesize Simply parabolic}
 		
 	\end{subfigure}
\hfill
 	\caption{{\footnotesize Possible dynamics around the Denjoy-Wolff point, when it is not a singularity.}}
 \end{figure}

Inner functions associated with parabolic basins are always of doubly parabolic type, while Baker domains can exhibit any of the three possible behaviours \cite{Konig,FH}, thus  establishing a classification among them.

The associated inner function is also a powerful tool to study  the dynamics of $ f $ on the boundary of the corresponding Fatou component $ U $ (we consider the boundary restricted to the domain $ \Omega(f) $ of definition of the iterated function $ f $, and  denote it by $ \partial U $). 
Although neither $ g $ nor $ \varphi $ extend continuously to $ \overline{\mathbb{D}} $ in general, some connections can be set in the sense of radial limits and cluster sets  (see Sect. \ref{sect-inner}). Indeed, both measure-theoretical  \cite{Aaronson, DM91} and topological properties \cite{BakerDominguez, Bargmann} of inner functions  have been shown to transfer to the dynamical plane in a wide range of situations, see e.g. 
 \cite{DevaneyGoldberg, BakerDominguez,BakerWeinreich, BaranskiKarpinska_GeometricCodingTree, Bargmann,BFJK_Accesses,FJ23, JF23} and references therein. We note that all the Fatou components studied in the previous works are attracting and parabolic basins, and doubly parabolic Baker domains, which share the same ergodic properties of the boundary map ($ f|_{\partial U} $ is ergodic and recurrent with respect to harmonic measure $ \omega_U $, and $ \omega_U $-almost every point has a dense orbit in $ \partial U $ -- aspects that are used in a fundamental way in the previous works).

On the other hand, hyperbolic and simply parabolic Baker domains may exhibit a completely different boundary behaviour.  For instance, for the simply parabolic Baker domain  of the function	\[f(z)=z+2\pi i \alpha +e^z,\]
for appropriate $ \alpha\in \left[ 0,1\right] \smallsetminus\mathbb{Q} $, studied in  \cite[Thm. 4]{BakerWeinreich} (see also Ex. \ref{ex-sp-univalent}), all points in the boundary, which is a Jordan curve, converge to infinity under iteration. Thus, the previous techniques do not apply and such Baker domains remain somehow unexplored,
except for the results in 
 \cite{RipponStallar_UnivalentBD}, \cite[Thm. A]{BFJK-Escaping}, which establish the measure of the escaping set in $ \partial U $, under certain conditions on the associated inner function.  

The goal of this paper is to understand both topologically and dynamically the boundaries of hyperbolic and simply parabolic Baker domains.

First, from an ergodic point of view, a hyperbolic or simply parabolic inner function is non-ergodic and non-recurrent with respect to the Lebesgue measure $ \lambda $ \cite[Thm. 3.1]{DM91}. By relying on properties of the Riemann map, we show that both non-ergodicity and non-recurrence transfer to the boundary map, with respect to harmonic measure.
\begin{named}{Theorem A}{\bf (Ergodic properties of the boundary map)}\label{teo:A} 
	Let $ f\in\mathbb{K} $, and let $ U $ be a simply connected  Baker domain, of hyperbolic or simply parabolic type. Then, $ f|_{\partial U} $ is non-ergodic and non-recurrent with respect to the harmonic measure $ \omega_U $.
\end{named}

If we restrict ourselves to the cases where the inner function $ g $ is well-defined around its Denjoy-Wolff point $ p\in\partial\mathbb{D} $ (i.e. $ p $ is not a singularity for $ g $), a stronger control on the  dynamics is achieved. Indeed, following the approach taken in \cite{benini2023boundary} to describe the boundary dynamics of wandering domains, one can define the {\em Denjoy-Wolff set} of $ f|_U $ as the set of points $ x\in\partial U $ such that $ f^n(x)\to\infty $ (i.e. $\dist_{\widehat{\mathbb{C}}}(f^n(x), \infty)\to 0$, see \cite[Sect. 9]{benini2023boundary}). In \cite{RipponStallar_UnivalentBD}, \cite[Thm. A]{BFJK-Escaping} it is proven that, if $ U $ is a hyperbolic or simply parabolic Baker domain such that the Denjoy-Wolff point of the associated inner function is not a singularity (in particular, if $ f|_U $ is univalent or has finite degree), then the Denjoy-Wolff set has full harmonic measure.

However, the main limitation of the Denjoy-Wolff set is  that it does not capture in which direction boundary orbits converge to infinity. Indeed, for points inside the Baker domain, the convergence takes place through the same access to infinity (known as the {\em dynamical access}, see \cite{BFJK_Accesses}). Thus, we introduce the notion of {\em Carathéodory set} as the set of points in $ \partial U $ which converge to the image under $ \varphi^* $ of the Denjoy-Wolff point with respect to the Carathéodory topology of $ \partial U $ (or, morally, the points in $ \partial U $ which converge to $ \infty $ through the dynamical access). 
\begin{defi*}{\bf (Carathéodory set)}
		Let $ f\in\mathbb{K}$, and let $ U $ be an invariant simply connected Baker domain. Let $ \varphi\colon\mathbb{D}\to U $ be a Riemann map, and let $ g=\varphi^{-1}\circ f\circ \varphi $ be the inner function associated  with $ f|_U $ by $ \varphi $.  We say that $ x\in\partial U $ is in the {\em Carathéodory set} if, for any crosscut neighbourhood $ N\subset \mathbb{D} $ at the Denjoy-Wolff point $ p\in\partial \mathbb{D} $,  there exists $ k_0\geq0 $ such that, for all $ k\geq k_0 $, \[f^{k}(x)\in\overline{\varphi(N)}.\]
\end{defi*}

By the results of Doering and Mañé for inner functions \cite{DM91}, the Carathéodory set has full harmonic measure (and, in particular, it is dense in $ \partial U $) for simply connected Baker domains, of hyperbolic or simply parabolic type. 
Moreover the escaping points constructed in \cite{RipponStallar_UnivalentBD}, \cite[Thm. A]{BFJK-Escaping} are also in the Carathéodory set (and since they are escaping, they are also in the Denjoy-Wolff set). However,  points in the Carathéodory set may fail to converge to infinity in general, if the cluster set of the Denjoy-Wolff point is non-degenerate. 

In view of these results, one shall ask if there exist points in $ \partial U $ which are not in the Carathéodory set.
The answer is negative in general, as shown by the univalent Baker domain of $ f(z)=z+2\pi i \alpha +e^z $  introduced above (Ex. \ref{ex-sp-univalent}). In Section \ref{subs-caratheodory}, we analyse other examples of univalent Baker domains which have either none or a single non-Carathéodory point in the boundary. We note that, in this case, the associated inner function is a Möbius transformation, and every point in $ \partial \mathbb{D} $ (with at most one exception) converges to the Denjoy-Wolff point. 

On the other hand, if the Baker domain $ U $ is non-univalent, there exists a perfect set in $ \partial\mathbb{D} $ in which iterates do not converge to the Denjoy-Wolff point locally uniformly (the Julia set $ \mathcal{J}(g) $), and thus one expects plenty of non-Carathéodory points in the boundary of $ U $. 

Our result reads as follows, recalling that the {\em post-singular set} of $ f $ is defined as 
\[ P(f)\coloneqq \overline{\bigcup\limits_{s\in SV(f)}\bigcup\limits_{n\geq 0} f^n(s)}, \] where $ SV(f) $ denotes the set of singular values of $ f $.

\begin{named}{Theorem B}{\bf (Non-empty non-Carathéodory set)}\label{teo:B} 
	Let $ f\colon \mathbb{C}\to\mathbb{C}$ be an entire function, and let $ U $ be a non-univalent hyperbolic or simply parabolic Baker domain. Let $ \varphi\colon\mathbb{D}\to U $ be a Riemann map, and let $ g=\varphi^{-1}\circ f\circ \varphi $ be the inner function associated  with $ f|_U $ by $ \varphi $.  
Assume there exists a crosscut neighbourhood $ N_\xi $ of $ \xi\in\mathcal{J}(g) $ such that  $ {\varphi(N_\xi)} \cap P(f) =\emptyset$.  Then, there are uncountably many points which are not in the Carathéodory set, which are furthermore accessible from $ U $.
\end{named}
 
 The hypothesis of the existence of a crosscut neighbourhood $ N_\xi $ of $ \xi\in\mathcal{J}(g) $ such that  $ {\varphi(N_\xi)} \cap P(f) =\emptyset$ is always satisfied when $ SV(f)\cap U $ is compactly contained in $ U $, in particular, if $ f|_U $ has finite degree (see Prop. \ref{prop-compactlySV}). Note also that, if $ SV(f)\cap U $ is compactly contained in $ U $, then the Denjoy-Wolff point of the inner function is not a singularity (Prop. \ref{prop-compactlySV}), and by \cite[Thm. A]{BFJK-Escaping}, the Denjoy-Wolff set (and the Carathéodory set) has full harmonic measure. However, it follows from \ref{teo:B} that not all points in $ \partial U $ are in the Carathéodory set.
 
 The previous result relies strongly on the topology of the boundary of unbounded Fatou components of entire functions, studied in \cite{BakerDominguez, Bargmann, JF23}. 
 Recall that, for non-univalent Fatou components of entire functions, the set of accesses to infinity
\[\Theta\coloneqq \left\lbrace \xi\in\partial \mathbb{D}\colon \varphi^*(\xi)=\infty\right\rbrace \] is related with the Julia set $ \mathcal{J}(g) $ in the sense that $ \mathcal{J}(g) \subset \overline{\Theta}$ (see  Sect. \ref{sect-inner}).
For hyperbolic and simply parabolic Baker domains, this translates into the following properties, concerning the topology of the boundary, the Julia set $ \mathcal{J}(g) $ and the singularities of the associated inner function. 
\begin{named}{Proposition C}{\bf (Baker domains of entire functions)}\label{prop:C} 
	Let $ f\colon \mathbb{C}\to\mathbb{C}$ be an entire function, and let $ U $ be a non-univalent hyperbolic or simply parabolic Baker domain. Let $ \varphi\colon\mathbb{D}\to U $ be a Riemann map, and let $ g=\varphi^{-1}\circ f\circ \varphi $ be the inner function associated  with $ f|_U $ by $ \varphi $.  Then, the following holds.

	\begin{enumerate}[label={\em(\alph*)}]
		\item 	{\em (Topology of $ \partial U $)} $ \partial U $ has infinitely many components. Moreover, for all $ \xi\in\partial \mathbb{D} $, the cluster set $ Cl(\varphi, \xi) \cap\mathbb{C}$ is contained in either one or two  components of $ \partial U $; in the later case, $ \varphi^*(\xi)=\infty $.
		
			\noindent  Each component $ C $ of $ \partial U $ contains points of  the cluster set $ Cl(\varphi, \xi)\cap\mathbb{C} $ of a unique $\xi\in  \mathcal{J}(g) $, with at most countably many exceptions. 
			
		\item  {\em (Associated inner function)} Assume $ \overline{\Theta}\neq \partial\mathbb{D} $. Then, $ \mathcal{J}(g) $ is a Cantor set, and  the set of singularities of $ g $, $ \textrm{\em sing} (g) $, is nowhere dense in $ \partial\mathbb{D} $. If, in addition, the Denjoy-Wolff point $ p\in\partial\mathbb{D} $ is not a singularity, then both $ \mathcal{J}(g) $ and $ \textrm{\em sing} (g)$ have zero $ \lambda $-measure. \label{thmB-item2}
			\item {\em (Periodic points)} Assume $ \mathcal{J}(g)\neq \partial\mathbb{D} $.  Then, periodic points are not dense on $ \partial U $. 
	\end{enumerate}
\end{named}

Examples of non-univalent hyperbolic or simply parabolic Baker domains of entire functions are given in \cite{RipponStallard_BD}, \cite{Rippon_BD}, \cite[Ex. 3.6]{Bargmann} \cite{BergweilerZheng}, compare also with \cite[Sect. 2.5]{Bargmann} for examples of meromorphic functions (thought as inner functions with a unique singularity). The examples in \cite{BergweilerZheng} satisfy that $ \overline{\Theta}\neq \partial\mathbb{D} $, and in fact $ \varphi $ extends continuously to an arc on $ \partial\mathbb{D} $. 

The question on the size of the singularities of the inner function associated to a Fatou component has been widely studied \cite{efjs,FatousAssociates, JF23,Jov24}, see also \cite[Part III]{ivrii2023inner}, although all these inner functions are  associated with attracting or parabolic basins, or doubly parabolic Baker domains.  This is the first time that inner functions of hyperbolic or simply parabolic type are addressed. Note also that, although for finite degree Blaschke products the Julia set is either a Cantor set or the unit circle, this no longer holds  for inner functions of infinite degree \cite[Sect. 2.5]{Bargmann}, and hence {\em \ref{thmB-item2}} provides additional requirements on the inner functions that can be associated to Fatou components.

Finally, one can go one step further and ask whether there are periodic points in the boundary of such Baker domains. Note that it is an open question if, given any Baker domain, there exists a periodic point in its boundary. In a seminal paper
\cite{przytycki_zdunik}, Przytycki and Zdunik proved that accessible periodic points are dense in the  boundaries of basins of rational maps.  This result was extended to unbounded basins and doubly parabolic Baker domains \cite{Jov24}, under certain conditions on the postsingular set.
Even if periodic points are not dense in the boundaries in general, we prove the following.
\begin{named}{Theorem D}{\bf (Boundary dynamics)}\label{teo:D} 
	Let $ f\in\mathbb{K} $ and let $ U $ be a non-univalent simply connected Baker domain, of hyperbolic or simply parabolic  type. Let $ \varphi\colon\mathbb{D}\to U $ be a Riemann map, and let $ g=\varphi^{-1}\circ f\circ \varphi $ be the inner function associated  with $ f|_U $ by $ \varphi $. Assume there exists a crosscut neighbourhood $ N_\xi $ of $ \xi\in\mathcal{J}(g) $ such that  $ \overline{\varphi(N_\xi)} \cap P(f) =\emptyset$.
	
	\noindent Then, the following holds.
	\begin{enumerate}[label={\em(\alph*)}]
		\item There exist countably many accessible periodic points on $ \partial U $. More precisely, for any $ \zeta\in \mathcal{J}(g) $ and any crosscut neighbourhood $ N_\zeta $ of $ \zeta $, there is an accessible periodic point in $ \overline{\varphi(N_\zeta )}$.
	
		\item For any countable collection of crosscut neighbourhoods $ \left\lbrace N_{\xi_k} \right\rbrace _k$ of $ \xi_k\in\mathcal{J}(g) $, there exists an accessible point $ x\in\partial U $ and a sequence $ n_k\to\infty $ such that $ f^{n_k}(x)\in  \overline{\varphi(N_{\xi_k} )}$. Moreover, if $ f|_U $ has infinite degree, $ x $ can be taken {\em bungee}, i.e. such that $ \left\lbrace f^n(x)\right\rbrace _n $ neither escapes nor is compactly contained in $ \Omega (f) $.
			\item If $ \mathcal{J}(g) =\partial\mathbb{D} $ and $ \omega_U(P(f))=0 $, then periodic points are dense on $ \partial U $. Moreover, if $ f|_U $ has infinite degree, there are accessible points on $ \partial U $ whose orbit is dense on $ \partial U $.
	\end{enumerate}
\end{named}

The assumptions in (c) are satisfied for the hyperbolic Baker domain of the function
\[f(z)=2z-3+e^z,\] studied by Bargmann \cite[Ex. 3.6]{Bargmann} (see Ex. \ref{ex-hyp-bargmann}). Note the wide range of boundary dynamics for hyperbolic and simply parabolic Baker domains: from examples for which every point in $ \partial U $ is escaping, to others for which both periodic and bungee points are dense on $ \partial U $.

We remark that the construction of periodic boundary points for basins of rational maps in \cite{przytycki_zdunik} relies strongly on the ergodic properties of the boundary map (more precisely, ergodicity and recurrence), as well as the constructions in \cite{JF23, Jov24}. Instead, we use the topological properties of $ \mathcal{J}(g) $, but the complexity of the proof increases substantially.

\vspace{0.4cm}
{\bf Notation.}
Throughout this article, $ \mathbb{C} $ and $ \widehat{\mathbb{C}} $ denote the complex plane and the Riemann sphere, respectively. Given $ f\in\mathbb{K} $, we denote by $ E(f) $ the set of singularities of $ f $, and $ \Omega(f)\coloneqq\widehat{\mathbb{C}}\smallsetminus E(f) $ (if $ f $ is transcendental entire, $ \Omega(f)=\mathbb{C} $ and $ E(f)=\left\lbrace \infty\right\rbrace  $).
 Given a set $ A\subset \mathbb{C} $, we denote by $ \overline{ A } $ and $ \partial A $, its closure and its boundary taken in $ \Omega(f)$; and by $ \widehat{ A } $ and $ \widehat{\partial} A $, its closure and its boundary when considered in $ \widehat{\mathbb{C}} $.    
 
 If $ U $ is a  simply connected domain, $ \omega_U $ stands for the harmonic measure in $ \partial U $. We denote by $ \mathbb{D} $, the unit disk; by $ \partial\mathbb{D} $, the unit circle; and by $ \lambda $, the Lebesgue measure on $ \partial\mathbb{D} $, normalized so that $ \lambda ( \partial\mathbb{D} )=1 $.
 
 \vspace{0.4cm}
 {\bf Acknowledgements.} I would like to express my deep gratitude to my thesis advisor, Prof. Núria Fagella, for her guidance and support. I would like to thank Phil Rippon and Gwyneth Stallard for helpful discussions and comments, and specially for pointing out the reference \cite{Stallard}, which inspired the proof of \ref{teo:B}.
\section{Inner function associated with a hyperbolic or simply parabolic Baker domain}\label{sect-inner}
The main tool when working with a simply connected invariant Fatou component of $ f\in\mathbb{K} $ is its {\em associated inner function}, defined as \[g\colon \mathbb{D}\to\mathbb{D}, \hspace{0.5cm} g\coloneqq \varphi^{-1}\circ f\circ \varphi,\] for a Riemann map $ \varphi\colon \mathbb{D}\to U $. One can check that $ g $ is indeed an inner function \cite[Prop. 5.6]{Jov24}, meaning that, for $ \lambda $-almost every $ \xi\in\partial\mathbb{D}$, the radial limit \[g^*(\xi)\coloneqq \lim\limits_{r\to 1^-}g(r\xi)\] belongs to $ \partial\mathbb{D} $. Note that $ g $ is unique up to conjugacy by a conformal automorphism of $ \mathbb{D} $. 

In the case when $ f|_U $ has finite degree, then $ g $ is a finite Blaschke product; otherwise $ g $ has infinite degree and has at least a {\em singularity} on $ \partial\mathbb{D} $, i.e. a point  $ \xi\in\partial\mathbb{D} $ such that $ g $ cannot be continued analytically to any neighbourhood of it. We denote the set of singularities of $ g $ by $ \textrm{sing}(g) $. 

Throughout this paper (and specially in Sect. \ref{subsect-inverse-branches}), we assume that every inner function $ g $ is always continued analytically to $ \widehat{\mathbb{C}}\smallsetminus\overline{\mathbb{D}} $ by the reflection principle, and to $ \partial\mathbb{D}\smallsetminus\textrm{sing}(g) $ by analytic continuation. In other words, $ g $ is considered as is maximal meromorphic extension $$ g\colon\widehat{\mathbb{C}}\smallsetminus\textrm{sing}(g) \to\widehat{\mathbb{C}} .$$

\subsection{Iteration of inner functions associated with Baker domains}
Let $ g\colon\mathbb{D}\to\mathbb{D} $ be the inner function associated with a Baker domain. It is clear that $ g|_\mathbb{D} $ has no fixed points; thus, by the Denjoy-Wolff Theorem, there exists a point $ p\in\partial\mathbb{D} $ towards which all points converge under iteration. We say that $ p $ is the {\em Denjoy-Wolff point} of $ g $. It is customary to assume either that the Denjoy-Wolff point is $ 1\in\partial\mathbb{D} $ (by precomposing the Riemann map by a rotation), or to consider the inner function defined in the upper half-plane $ \mathbb{H}$ instead of the unit disk $ \mathbb{D} $, with $ \infty$ being its Denjoy-Wolff point.

The dynamics of holomorphic self-maps of $ \mathbb{D} $ with Denjoy-Wolff point in $ \partial\mathbb{D} $ have been carefully described by Cowen.

\begin{thm}{\bf (Cowen's classification of self-maps of $ \mathbb{D} $, {\normalfont \cite{cowen}})}\label{teo-cowen}
	Let $ g$ be a holomorphic self-map of  $ \mathbb{D} $ with Denjoy-Wolff point $ p\in\partial \mathbb{D} $. Then, there exists a simply connected domain $ V \subset\mathbb{D}$, a domain $ \Omega $ equal to $ \mathbb{C} $ or $ \mathbb{H}=\left\lbrace \textrm{\em Im }z>0\right\rbrace  $, a holomorphic map $ \psi\colon \mathbb{D}\to\Omega  $, and a Möbius transformation $ T\colon\Omega\to\Omega $, such that: \begin{enumerate}[label={\em (\alph*)}]
		\item $ V $ is an {\em absorbing domain} for $ g $ in $ \mathbb{D} $, i.e. $ g(V)\subset V $ and for every compact set $ K\subset\mathbb{D} $, there exists $ n\geq 0 $ such that $ g^n(K)\subset V $;
		\item $ \psi(V) $ is an absorbing domain for $ T $ in $ \Omega $;
		\item $ \psi\circ g=T\circ \psi $ in $ \mathbb{D} $;
		\item $ \psi $ is univalent in $ V $.
	\end{enumerate}
	
	Moreover, $ T $  and $ \Omega $ depend only on the map $ g $, not  on the absorbing domain $ V $. In fact (up to a conjugacy of $ T $ by a Möbius transformation preserving $ \Omega $), one of the following cases holds:\begin{itemize}
		\item $ \Omega= \mathbb{C} $, $ T=\textrm{\em id}_\mathbb{C} +1 $ {\em (doubly parabolic type)},
		\item $ \Omega= \mathbb{H} $, $ T=\lambda\textrm{\em id}_\mathbb{H}$, for some $ \lambda>1 $ {\em (hyperbolic type)},
		\item $ \Omega= \mathbb{H} $, $ T=\textrm{\em id}_\mathbb{H} \pm1 $ {\em (simply parabolic type)}.
	\end{itemize}
\end{thm}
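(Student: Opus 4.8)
The statement is the classical linearization theorem of Cowen \cite{cowen}; I sketch the strategy one would follow to prove it. The plan is to pass to a half-plane model and read the type off the Denjoy--Wolff data. After conjugating by a Möbius transformation sending $p$ to $\infty$, one regards $g$ as a holomorphic self-map $h$ of $\mathbb{H}$ whose iterates tend to $\infty$ locally uniformly. The Julia--Wolff--Carathéodory theorem supplies the angular derivative $\alpha:=g'(p)\in(0,1]$, equivalently the boundary dilation $\lambda:=1/\alpha\geq 1$ of $h$ at $\infty$, characterized by $\lim_{w\to\infty}h(w)/w=\lambda$ along nontangential approach. The governing dichotomy is the sign of the hyperbolic step $s:=\lim_n\rho_{\mathbb{D}}(g^n(z),g^{n+1}(z))$, a quantity that is nonincreasing in $n$ by the Schwarz--Pick inequality (hence the limit exists) and, by a standard argument, independent of $z$. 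One checks $s>0\iff\lambda>1$ (hyperbolic case) and $s=0\iff\lambda=1$ (parabolic case), which already separates the hyperbolic type from the two parabolic types.

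Next I would construct the absorbing domain $V$ and the intertwining map $\psi$. For $V$ one takes a forward-invariant region deep into the model: a sub-horodisk at $p$ in the parabolic case, or a Stolz-type region (equivalently a union of hyperbolic balls tracking the orbit) in the hyperbolic case. Forward invariance $g(V)\subset V$ and the absorbing property follow from Julia's lemma, which sends horodisks at $p$ into themselves, together with Denjoy--Wolff convergence; choosing $V$ far enough into the model region, where $h$ closely mimics its linear part, makes $g|_V$ injective and will force $\psi|_V$ univalent. The map $\psi$ is then built as a renormalized limit of iterates: in the hyperbolic case a Koenigs--Schröder limit $\psi=\lim_n\lambda^{-n}h^n$, which satisfies $\psi\circ g=\lambda\,\psi$; in the parabolic case an Abel limit $\psi=\lim_n(h^n-n)$, which satisfies $\psi\circ g=\psi+1$ (the defining telescoping identity $\psi(h(w))=\psi(w)+1$ is immediate once convergence is known). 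Having defined $\psi$ univalently on $V$, one extends it to all of $\mathbb{D}$ by $\psi:=T^{-n}\circ\psi\circ g^n$ on $g^{-n}(V)$; this is well defined precisely because $T$ is injective and $V$ is absorbing, and it delivers $(c)$ globally while preserving $(a)$, $(b)$, $(d)$.

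Finally one identifies $\Omega$ and $T$ and separates the two parabolic subcases. In the hyperbolic case $T(w)=\lambda w$ with $\lambda>1$ preserves $\mathbb{H}$, and $\psi(V)$ is seen to be absorbing in $\Omega=\mathbb{H}$. In the parabolic case $T$ is a unit translation, and the delicate point is deciding whether the model domain is $\Omega=\mathbb{C}$ (doubly parabolic, $T=\mathrm{id}+1$) or $\Omega=\mathbb{H}$ (simply parabolic, $T=\mathrm{id}\pm 1$). This is governed by a finer asymptotic invariant measuring the transverse drift of the orbit in the half-plane model, namely whether the imaginary parts $\mathrm{Im}\,h^n(w)$ stay bounded (the renormalized images then fill a full strip and $\psi$ maps onto $\mathbb{C}$) or diverge (so $\psi$ maps only onto a half-plane). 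The hard part, and the main obstacle, is exactly this analytic core: proving convergence of the renormalized limits defining $\psi$ through careful estimates on $\rho_{\mathbb{D}}(g^n(z),g^{n+1}(z))$ and on the summable deviation of $h$ from its model (in the spirit of Baker--Pommerenke and Pommerenke), together with verifying that $\psi(V)$ is genuinely absorbing for $T$ and correctly detecting the doubly-versus-simply dichotomy; this is where essentially all the difficulty of the theorem concentrates.
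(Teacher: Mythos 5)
This statement is imported verbatim from Cowen's paper and is not proved in the article, so there is no internal proof to compare against; your sketch instead follows the renormalization route of Koenigs/Abel limits in the spirit of Baker--Pommerenke and Pommerenke, which is a legitimate alternative to Cowen's original argument (Cowen constructs a fundamental set $V$, forms the direct limit of $V\xrightarrow{g}V\xrightarrow{g}\cdots$ as an abstract Riemann surface, and reads off $\Omega$ and $T$ from the uniformization of that limit and the classification of its automorphisms). That said, your sketch contains a genuine error at the point where the trichotomy is supposed to emerge. You claim that the hyperbolic step satisfies $s>0\iff\lambda>1$ and $s=0\iff\lambda=1$. This is false: the translation $h(w)=w+1$ on $\mathbb{H}$ has boundary dilation $\lambda=\lim_{w\to\infty}h(w)/w=1$, hence is parabolic, yet its hyperbolic step $\rho_{\mathbb{H}}(w,w+1)$ is a positive constant. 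This is precisely the simply parabolic model map, so your two criteria contradict each other on the very case the theorem is designed to isolate. The correct use of the two invariants is: $\lambda>1$ characterizes the hyperbolic type; among the parabolic maps ($\lambda=1$), zero hyperbolic step gives the doubly parabolic type ($\Omega=\mathbb{C}$) and positive hyperbolic step gives the simply parabolic type ($\Omega=\mathbb{H}$). As written, your argument never correctly separates the three cases, and the subsequent construction of $\psi$ (Koenigs limit versus Abel limit, and the choice of $\Omega$) hinges on exactly this separation.

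A secondary issue: your proposed finer invariant for the doubly-versus-simply dichotomy, namely whether $\mathrm{Im}\,h^n(w)$ stays bounded or diverges, is not the correct criterion either; the invariant that actually governs whether $\psi(V)$ is absorbing in $\mathbb{C}$ or only in $\mathbb{H}$ is again the vanishing or not of the hyperbolic step (equivalently, whether $|h^{n+1}(w)-h^n(w)|/\mathrm{Im}\,h^n(w)\to 0$), and orbits of zero-step maps may converge to the Denjoy--Wolff point either tangentially or non-tangentially. The remainder of your outline (Julia's lemma for forward invariance of $V$, extension of $\psi$ to all of $\mathbb{D}$ by $T^{-n}\circ\psi\circ g^n$, and the identification of the convergence of the renormalized iterates as the analytic core) is sound and matches the standard literature, but the classification itself --- the actual content of the ``moreover'' part of the statement --- rests on the flawed dichotomy above and would need to be repaired before the sketch could be completed.
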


See Figure \ref{figura1}.

In this paper we are interested in hyperbolic and simply parabolic Baker domains, whose associated inner function is, by definition, hyperbolic or simply parabolic, respectively.

\begin{figure}[h]\centering	\captionsetup[subfigure]{labelformat=empty}
\hfill
	\begin{subfigure}[b]{0.24\textwidth}
		\includegraphics[width=\textwidth, ]{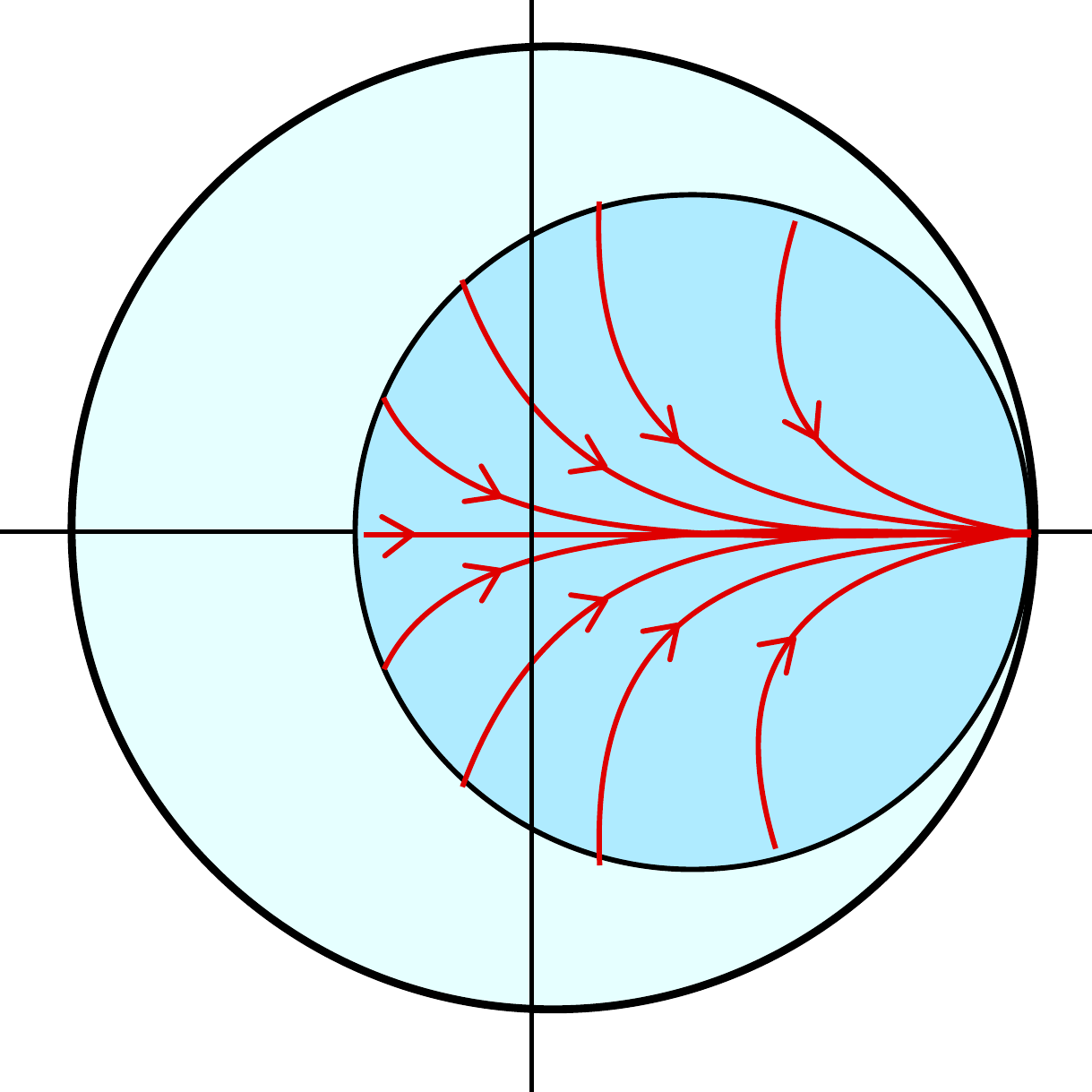}
		\caption{\footnotesize Doubly parabolic}
		
	\end{subfigure}
\hfill
\hfill
	\begin{subfigure}[b]{0.24\textwidth}
		\includegraphics[width=\textwidth]{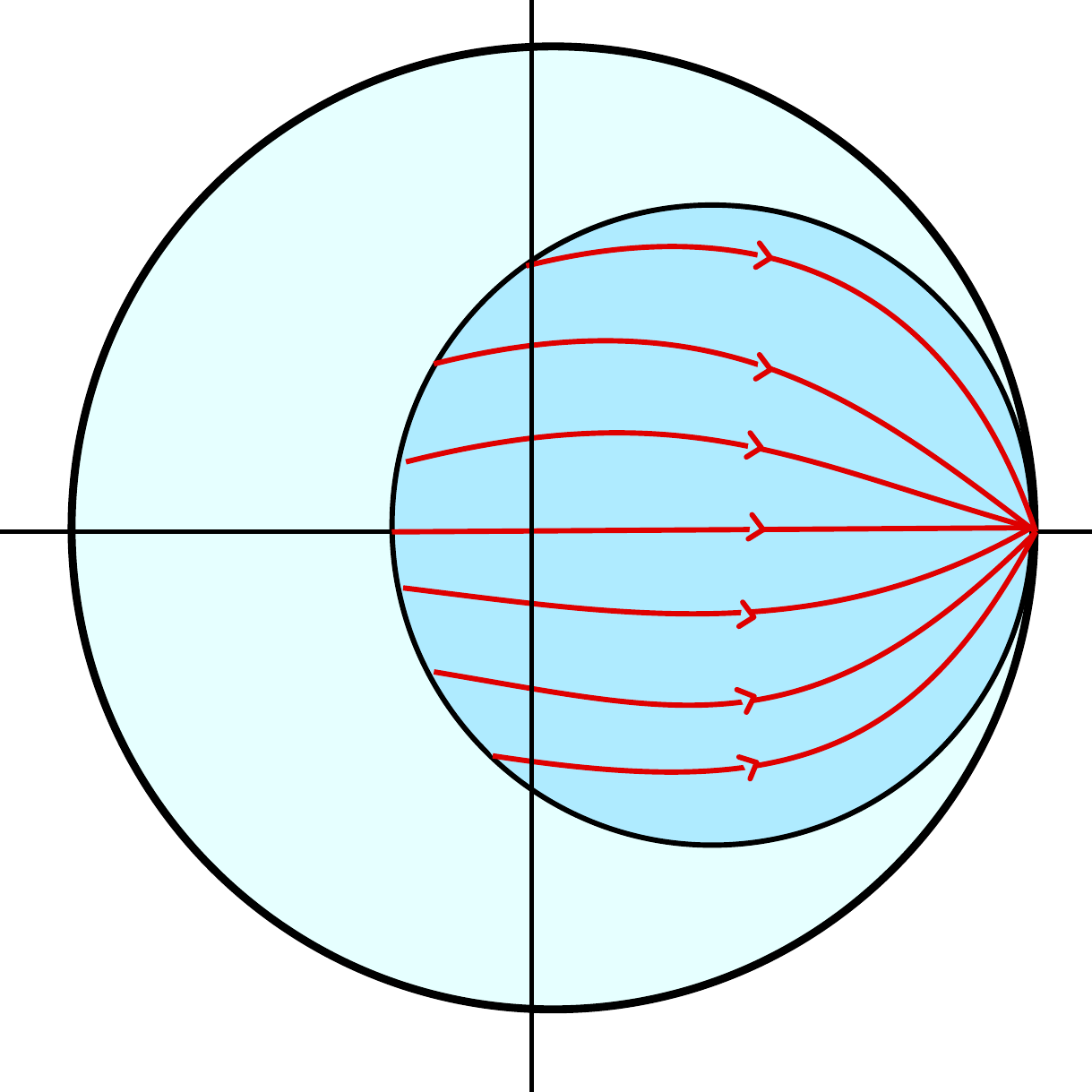}
		\caption{\footnotesize Hyperbolic}
		
	\end{subfigure}
\hfill
\hfill
	\begin{subfigure}[b]{0.24\textwidth}
		\includegraphics[width=\textwidth]{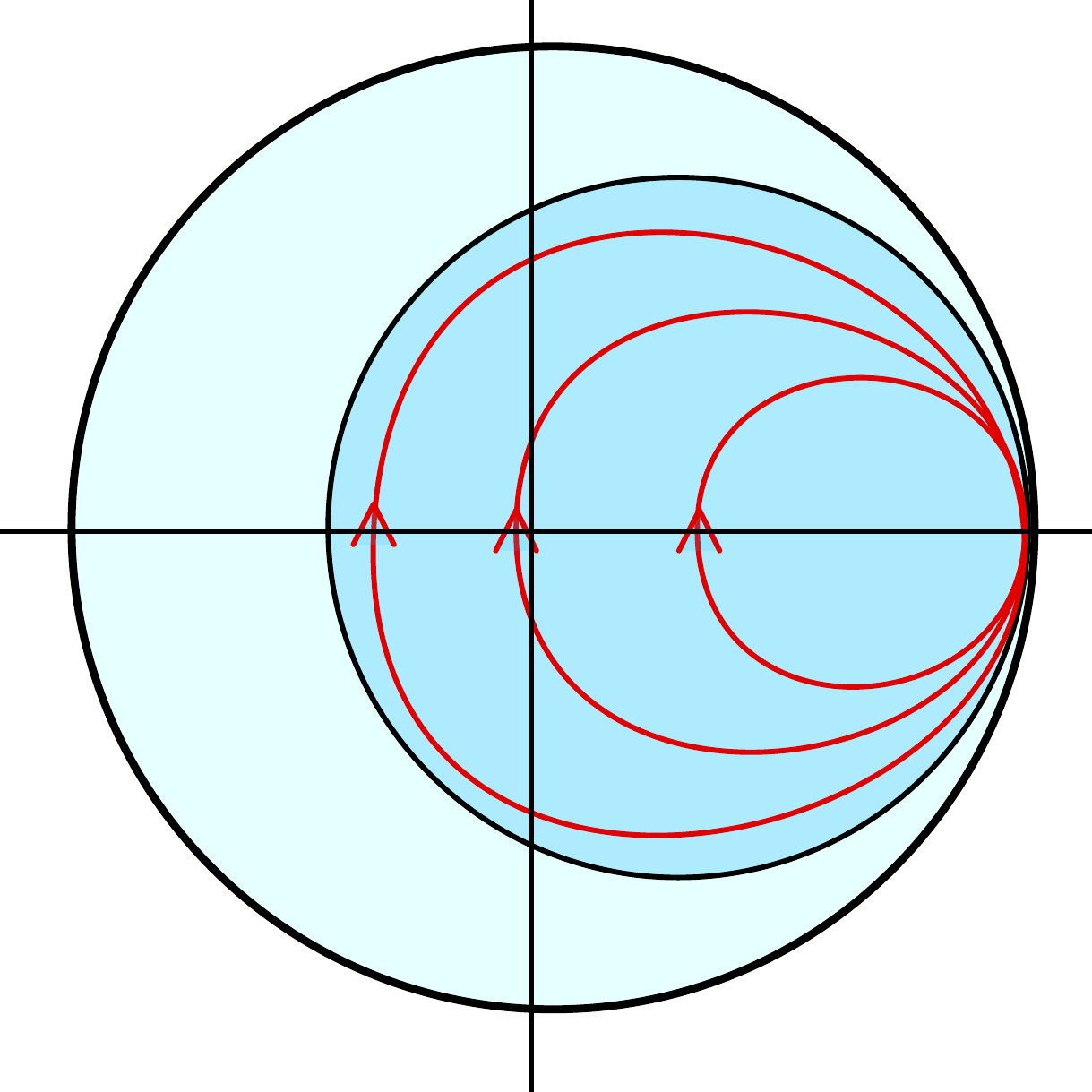}
		\caption{\footnotesize Simply parabolic}
		
	\end{subfigure}
\hfill
	\caption{{\footnotesize The different types of convergence to the Denjoy-Wolff point.}}\label{figura1}
\end{figure}

\subsubsection*{Ergodic properties of $ g^*\colon\partial\mathbb{D}\to\partial\mathbb{D} $} Any inner function $ g\colon\mathbb{D}\to\mathbb{D} $ induces a measure-theoretical dynamical system on $ \partial\mathbb{D} $, given by considering its radial extension\[g^*\colon\partial \mathbb{D}\to\partial\mathbb{D},\] which is well-defined $ \lambda $-almost everywhere and measurable. In \cite{Aaronson, DM91}, ergodic properties of such maps have been studied (see also \cite[Sect. 3.2]{Jov24}). 

Recall that given a measure space $ (X,\mathcal{A}, \mu) $ and a measurable transformation $ T\colon X\to X $ (not necessarily measure-preserving), $ T $ is said to be {\em ergodic} if $ T $ is non-singular and for every $ A\in\mathcal{A} $ with $ T^{-1}(A)=A $, it holds $ \mu (A)=0 $ or $ \mu (X\smallsetminus A)=0 $. The map $ T $ is {\em recurrent} if for every $ A\in\mathcal{A} $ and $ \mu $-almost every $ x\in A $ there exists a sequence $ n_k\to\infty $ such that $ T^{n_k}(x)\in A $. 

The radial extension of an inner function is always non-singular \cite[Prop. 6.1.1]{Aaronson}. Moreover, for hyperbolic and simply parabolic inner functions, the following holds (see \cite[Thm. 3.1, Thm. 4.1, Corol. 4.3]{DM91}, combined with \cite[Lemma 2.6]{Bargmann}).
\begin{thm}{\bf (Ergodic properties of $ g^* $)}\label{thm-ergodic-g}
	Let $ g\colon \mathbb{D}\to\mathbb{D} $ be an inner function, of hyperbolic or simply parabolic type. Then, $ g^*|_{\partial\mathbb{D}} $ is non-ergodic and non-recurrent. Moreover, $ \lambda $-almost every $ \xi \in\partial\mathbb{D}$ converges to the Denjoy-Wolff point under the iteration of $ g^* $.
\end{thm}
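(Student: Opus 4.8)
The plan is to read off all three conclusions from Cowen's linearizing model (Theorem \ref{teo-cowen}). For both types under consideration $\Omega=\mathbb{H}$, and the Möbius map is $T(w)=\lambda w$ with $\lambda>1$ in the hyperbolic case and $T(w)=w\pm1$ in the simply parabolic case; in either case the Denjoy--Wolff point $p$ corresponds to $\infty\in\partial\mathbb{H}$, which is the unique attracting boundary fixed point of $T$. Since $\psi\colon\mathbb{D}\to\mathbb{H}$ is a nonconstant holomorphic map into a half-plane, it is of bounded characteristic and admits finite radial limits $\psi^*(\xi)\in\partial\mathbb{H}=\widehat{\mathbb{R}}$ for $\lambda$-a.e. $\xi\in\partial\mathbb{D}$; moreover, by a classical boundary uniqueness theorem (Luzin--Privalov), the level sets $\{\psi^*=c\}$ are $\lambda$-null for every fixed $c\in\widehat{\mathbb{R}}$, as otherwise $\psi$ would be constant. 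The first technical step is to upgrade the semiconjugacy $\psi\circ g=T\circ\psi$ on $\mathbb{D}$ to the a.e.\ boundary identity $\psi^*\circ g^*=T\circ\psi^*$; this is exactly where the ingredient \cite[Lemma 2.6]{Bargmann} enters, using that $g^*$ is non-singular (so it does not send positive-measure sets into null sets) and that $T$ is a homeomorphism of $\widehat{\mathbb{R}}$.

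The heart of the argument is the convergence $g^{*n}(\xi)\to p$ for $\lambda$-a.e.\ $\xi$. For a.e.\ $\xi$ the value $\psi^*(\xi)$ is finite (and nonzero in the hyperbolic case), and then the model orbit $T^n(\psi^*(\xi))$ equals $\lambda^n\psi^*(\xi)$ or $\psi^*(\xi)\pm n$, which in both cases tends to $\infty=\psi^*(p)$ in $\widehat{\mathbb{R}}$. It remains to transfer this to genuine convergence on $\partial\mathbb{D}$, and I expect this to be the main obstacle. The mechanism is that the orbit of $\xi$ eventually enters the absorbing domain $V$, on which $\psi$ is univalent by Theorem \ref{teo-cowen}(d), so near $p$ the boundary correspondence $\psi^*$ is essentially injective and proper; convergence of $T^n(\psi^*(\xi))$ to the model Denjoy--Wolff point then forces $g^{*n}(\xi)\to p$. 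Controlling the boundary behaviour of $\psi$ near $p$, and ruling out that the multivaluedness of $\psi^*$ away from $V$ spoils the limit, is the delicate point. It is also precisely what makes the statement special to the hyperbolic and simply parabolic cases: in the doubly parabolic case one has $\Omega=\mathbb{C}$, which has no boundary and whose model fixed point is not a boundary point, so this transfer breaks down — consistently with $g^*$ being ergodic and recurrent there.

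With a.e.\ convergence in hand, both remaining conclusions follow. For \emph{non-recurrence}, I would fix an arc $A\subset\partial\mathbb{D}$ with $\lambda(A)>0$ and $p\notin\overline{A}$; by the previous step a.e.\ point of $A$ has orbit converging to $p\notin\overline{A}$, hence leaves $A$ and never returns, which contradicts the defining property of recurrence. For \emph{non-ergodicity}, I exploit that $T$ carries a rich supply of invariant boundary sets: in the coordinate $u=\log|x|$ the hyperbolic map becomes a fixed translation $u\mapsto u+\log\lambda$ on each half-line, and the simply parabolic map is already the unit translation $x\mapsto x\pm1$ on $\mathbb{R}$, so in both cases the $T$-invariant sets in $\widehat{\mathbb{R}}$ are exactly the periodic ones. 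Setting $A=(\psi^*)^{-1}(S)$ for such a periodic $S$ and using the boundary semiconjugacy gives $(g^*)^{-1}(A)=A$ mod null; since the pushforward measure $(\psi^*)_*\lambda$ is not a point mass (because $\psi^*$ is nonconstant and, by Luzin--Privalov, charges no single $T$-orbit), one can choose $S$ with $0<\lambda(A)<1$, producing an intermediate invariant set and hence non-ergodicity.
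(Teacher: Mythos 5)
A remark on context first: the paper does not prove this theorem at all; it is quoted from Doering--Mañé \cite{DM91} together with \cite[Lemma 2.6]{Bargmann}, and the machinery the paper actually extracts (Theorem \ref{thm-DoreingManeInner}) shows that the semiconjugating map used there is not Cowen's $\psi$ but a specially normalized limit $h=\lim_n T_n\circ g^n$ with $h(0)=0$, which is itself an \emph{inner} function and hence measure-preserving on $\partial\mathbb{D}$. This already points at the first gap in your plan: a holomorphic map $\psi\colon\mathbb{D}\to\mathbb{H}$ has finite radial limits $\lambda$-a.e., but they lie in $\overline{\mathbb{H}}$, not necessarily on $\partial\mathbb{H}$, and nothing in Theorem \ref{teo-cowen} forces the boundary values of Cowen's $\psi$ onto $\widehat{\mathbb{R}}$; the innerness of the conjugating map is a nontrivial fact that Doering--Mañé obtain only for their normalized $h$. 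For the non-ergodicity argument this is repairable ($T$ has plenty of invariant Borel subsets of $\overline{\mathbb{H}}$, and by Privalov the pushforward $(\psi^*)_*\lambda$ is non-atomic, hence cannot be carried by a single countable $T$-orbit), and your non-recurrence deduction from a.e.\ convergence is sound. But it should not be asserted that $\psi^*$ lands on $\partial\mathbb{H}$.

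The serious gap is the one you flag yourself, and the mechanism you propose does not close it. From $T^n(\psi^*(\xi))\to\infty$, i.e.\ $\psi^*(g^{*n}(\xi))\to\infty$ for a.e.\ $\xi$, one cannot conclude $g^{*n}(\xi)\to p$: the preimage under $\psi^*$ of a neighbourhood of $\infty$ need not shrink to $p$, since $\psi$ typically has infinite valence and $\psi^*$ may take values near $\infty$ on subsets of $\partial\mathbb{D}$ accumulating everywhere on the circle. The absorbing domain $V$ cannot rescue this: $V\subset\mathbb{D}$ absorbs compact subsets of $\mathbb{D}$, whereas the orbit $\{g^{*n}(\xi)\}$ lies on $\partial\mathbb{D}$ and never enters $V$, so "the orbit of $\xi$ eventually enters $V$" is simply false for boundary points. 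The actual proof of a.e.\ convergence to the Denjoy--Wolff point in \cite{DM91} runs on a different engine: for hyperbolic and simply parabolic inner functions one has $\sum_n\bigl(1-|g^n(0)|\bigr)<\infty$, and a harmonic-measure/Borel--Cantelli estimate then shows that the set of $\xi$ whose orbit returns infinitely often to a fixed closed arc avoiding $p$ has measure zero (and this is precisely what fails in the doubly parabolic case, where the series diverges). Without that input, or the measure-preserving inner factor $h$ of Theorem \ref{thm-DoreingManeInner}, the transfer from the model to $\partial\mathbb{D}$ is unproven, and the a.e.-convergence statement --- on which your non-recurrence argument rests entirely --- remains open in your write-up.
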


Moreover, we need the following result, which is extracted from the proof of \cite[Thm. 3.1]{DM91}.
\begin{thm}\label{thm-DoreingManeInner}
	Let $ g\colon \mathbb{D}\to\mathbb{D} $ be a hyperbolic or simply parabolic inner function. Then, there exists $ h\colon\mathbb{D}\to\mathbb{D} $ and a Möbius transformation $ T\colon\mathbb{D}\to\mathbb{D} $ such that $ h\circ g= T\circ h$. Moreover, $ h=\lim_n h_n $, where $ h_n=T_n\circ g^n $, $ T_n\colon\mathbb{D}\to\mathbb{D} $ Möbius,  and $ h_n(0)=0 $ for all $ n $.
\end{thm}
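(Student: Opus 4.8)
The plan is to reduce the whole statement to Cowen's linearization (Theorem \ref{teo-cowen}) together with a renormalization-by-automorphisms argument, the only genuinely delicate point being the convergence of the normalized iterates. First I would apply Theorem \ref{teo-cowen} to obtain a holomorphic $\psi\colon\mathbb{D}\to\mathbb{H}$ with $\psi\circ g=T_0\circ\psi$, where $T_0=\lambda\,\mathrm{id}_{\mathbb{H}}$ ($\lambda>1$, hyperbolic case) or $T_0=\mathrm{id}_{\mathbb{H}}\pm1$ (simply parabolic case), and with $\psi$ univalent on an absorbing domain $V$ satisfying $g(V)\subset V$. Fixing a conformal $R\colon\mathbb{H}\to\mathbb{D}$, the map $\Psi=R\circ\psi$ already satisfies $\Psi\circ g=\widehat{T}\circ\Psi$ with $\widehat{T}=R\circ T_0\circ R^{-1}\in\mathrm{Aut}(\mathbb{D})$ hyperbolic (resp.\ parabolic); so an intertwining with a Möbius map exists for free, and the real content is to realize one as the normalized limit $\lim_n T_n\circ g^n$. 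Writing $z_n=g^n(0)\to p$ (and assuming, after conjugating $g$ by an automorphism, that $0$ is not precritical, so $(g^n)'(0)\neq0$), I let $T_n\in\mathrm{Aut}(\mathbb{D})$ be the unique automorphism with $T_n(z_n)=0$ and $(T_n\circ g^n)'(0)>0$, and set $h_n=T_n\circ g^n\colon\mathbb{D}\to\mathbb{D}$, so $h_n(0)=0$ and $h_n'(0)>0$.

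The algebra of the intertwining comes from the cocycle relation $h_n\circ g=T_n\circ g^{n+1}=(T_n\circ T_{n+1}^{-1})\circ h_{n+1}=:S_n\circ h_{n+1}$, with $S_n\in\mathrm{Aut}(\mathbb{D})$: any locally uniform limits will satisfy $h\circ g=T\circ h$. The family $\{h_n\}$ is normal, being holomorphic self-maps of $\mathbb{D}$. For the normalizers, $\{S_n\}$ is precompact in $\mathrm{Aut}(\mathbb{D})$: indeed $d_{\mathbb{D}}(0,S_n(0))=d_{\mathbb{D}}(z_n,z_{n+1})$ is nonincreasing by Schwarz--Pick, hence bounded, so $S_n(0)$ stays in a compact subset of $\mathbb{D}$, while the rotational part lies in the compact circle; thus every subsequential limit of $\{S_n\}$ is again an automorphism.

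The heart of the proof is upgrading these subsequential limits to honest full-sequence limits, and this is where Cowen's map is used quantitatively. Writing $\|Dg(\cdot)\|_{\mathbb{D}}$ for the hyperbolic derivative and using that $T_0$ is a hyperbolic isometry of $\mathbb{H}$, multiplicativity of $\|D(\cdot)\|$ applied to $\psi\circ g^n=T_0^{\,n}\circ\psi$ gives
\[
\|Dg^n(0)\|_{\mathbb{D}}=\frac{\|D\psi(0)\|}{\|D\psi(g^n(0))\|}\ge\|D\psi(0)\|>0,
\]
since $\|D\psi\|\le1$ by Schwarz--Pick and $\psi'(0)\neq0$ (as $0$ is not precritical). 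As $\|Dg^n(0)\|_{\mathbb{D}}$ is also nonincreasing, it decreases to some $L\ge\|D\psi(0)\|>0$; equivalently $\sum_n\big(1-\|Dg(z_n)\|_{\mathbb{D}}\big)<\infty$. Since $h_n'(0)=\|Dg^n(0)\|_{\mathbb{D}}\to L>0$, every subsequential limit of $\{h_n\}$ is non-constant. For the consecutive terms I write $h_{n+1}=G_n\circ h_n$ with $G_n=T_{n+1}\circ g\circ T_n^{-1}$, $G_n(0)=0$ and $G_n'(0)=\|Dg(z_n)\|_{\mathbb{D}}\to1$; Schwarz-lemma rigidity then yields $\|G_n-\mathrm{id}\|_{\{|w|\le r\}}\le C_r\big(1-\|Dg(z_n)\|_{\mathbb{D}}\big)$, a summable bound. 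Combining precompactness with this summable consecutive displacement, once a subsequence converges to a non-constant $h$ (whose image is compactly contained in $\mathbb{D}$ on compacta) the tail estimate forces $\{h_n\}$ to be locally uniformly Cauchy, giving full convergence $h_n\to h$ with $h\colon\mathbb{D}\to\mathbb{D}$ non-constant and $h(0)=0$. Passing to the limit in the cocycle relation, and using that an automorphism is determined by agreeing with the non-constant $h\circ g=T\circ h$ on the open set $h(\mathbb{D})$, forces $S_n\to T\in\mathrm{Aut}(\mathbb{D})$ and $h\circ g=T\circ h$.

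Finally I would identify the type of $T$. From $\psi\circ g=T_0\circ\psi$ with $\psi|_V$ and $T_0$ univalent, $g|_V$ is univalent, hence so is each $T_n\circ g^n|_V$, and by Hurwitz (with $h$ non-constant) $h|_V$ is univalent as well. On $h(V)$ one then has the genuine conjugacy $T=h\circ g\circ(h|_V)^{-1}$, so $T$ is holomorphically conjugate to $g|_V$ and, via $\psi$, to $T_0|_{\psi(V)}$; since the type of a Möbius map is a conjugacy invariant (read off from the angular derivative at the common boundary fixed point), $T$ is hyperbolic (resp.\ parabolic), in particular $T\neq\mathrm{id}$ with fixed point on $\partial\mathbb{D}$, matching the type of $g$. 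I expect the main obstacle to be exactly the full-sequence convergence to a non-degenerate $h$: one must control the hyperbolic distortion of the renormalizing automorphisms $T_n$ along the escaping orbit. Cowen's semiconjugacy is precisely the tool that tames the hyperbolic derivative, supplying both the uniform lower bound $\|Dg^n(0)\|_{\mathbb{D}}\ge\|D\psi(0)\|>0$ (non-degeneracy of $h$) and its summable decay (which upgrades subsequential convergence to convergence of the full sequence).
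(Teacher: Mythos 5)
Your argument is correct, and it is worth noting at the outset that the paper does not prove this statement at all: Theorem \ref{thm-DoreingManeInner} is quoted as being ``extracted from the proof of [DM91, Thm.~3.1]'', so you are being compared against Doering--Ma\~n\'e rather than against anything in the text. Your renormalization scheme --- $h_n=T_n\circ g^n$ with $h_n(0)=0$, the cocycle $h_n\circ g=S_n\circ h_{n+1}$, precompactness of $\{S_n\}$ from the monotonicity of $d_{\mathbb{D}}(z_n,z_{n+1})$, and full convergence via the summable consecutive displacement $\sum_n\bigl(1-\|Dg(z_n)\|_{\mathbb{D}}\bigr)<\infty$ --- is essentially the Doering--Ma\~n\'e construction, which is forced by the ``moreover'' clause anyway. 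Where you genuinely diverge is in the source of the two crucial quantitative inputs (the uniform lower bound $\|Dg^n(0)\|_{\mathbb{D}}\geq\|D\psi(0)\|>0$ and its summable decay): you extract both from Cowen's semiconjugacy $\psi\circ g=T_0\circ\psi$ via the isometry of $T_0$ and Schwarz--Pick for $\psi\colon\mathbb{D}\to\mathbb{H}$, whereas Doering--Ma\~n\'e work directly with Julia's lemma and the positive hyperbolic step. Since the paper already quotes Cowen's theorem (Theorem \ref{teo-cowen}), this is a clean and economical shortcut, and it correctly isolates why the argument must fail in the doubly parabolic case ($\Omega=\mathbb{C}$ is not hyperbolic). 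Two minor remarks. First, your reduction ``conjugate so that $0$ is not precritical'' moves the base point, so the normalization $h_n(0)=0$ is then taken at the new origin; this is harmless here (the associated inner function is only defined up to conjugacy by automorphisms), but if one insists on the literal statement one can instead get non-degeneracy of the limit without derivatives from $d_{\mathbb{D}}\bigl(h_n(0),h_n(g(0))\bigr)=d_{\mathbb{D}}(z_n,z_{n+1})\geq c>0$. Second, your closing paragraph identifying the type of $T$ is not required by the statement, and the step ``the type of a M\"obius map is a conjugacy invariant'' is applied to a conjugacy between restrictions to proper subdomains, which does not by itself pin down the global type of $T$; the paper instead records separately, after the theorem, that $T$ has no fixed point in $\mathbb{D}$ and hence is hyperbolic or simply parabolic. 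Neither point affects the validity of your proof of the statement as given.
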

 It follows from \cite{Ferreira23} that $ h $ is inner (and $ h(0)=0 $, and hence preserves the Lebesgue measure on $ \partial \mathbb{D} $, \cite[Thm. A]{DM91}); moreover, $ h $ is a Möbius transformation if and only if $ g $ is univalent, and otherwise $ h $ has infinite degree. Note that $ T $ cannot have fixed points, and thus is hyperbolic or simply parabolic (doubly parabolic inner functions are never univalent).

Applying the Lehto-Virtanen Theorem \cite[Sect. 4.1]{Pom92} as usual, one deduces that $ h^* $ and $ h^*\circ g^* $ exists $ \lambda $-almost everywhere, and \[h^*\circ g^*=T\circ h^*,\]where defined.
\subsubsection*{The Fatou set $ \mathcal{F}(g) $ and the Julia set $ \mathcal{J}(g) $} Another approach to describe the dynamics of inner functions on $ \partial\mathbb{D} $ is developed in \cite{BakerDominguez, Bargmann}, where instead of considering the induced measure-theoretical dynamical system as in \cite{Aaronson, DM91}, it is considered the holomorphic dynamical system given by the maximal meromorphic extension of $ g $, and the normality of the sequences of iterates is studied.

 More precisely, assume $ g $ is not a Möbius transformation. Then, the {\em Fatou set} $ \mathcal{F}(g) $ is the set of all points $ z\in\widehat{\mathbb{C}} $ for which there exists an open neighbourhood $ V\subset \widehat{\mathbb{C}} $ of $ z $ such that $ \left\lbrace g^n|_V\right\rbrace _n $ is well-defined and normal. The {\em Julia set} $ \mathcal{J}(g) $ is the complement of $ \mathcal{F} (g)$ in $ \widehat{\mathbb{C}} $. In view of the Denjoy-Wolff Theorem, it is clear that the Fatou set is precisely the set of points for which iterates converge locally uniformly to the Denjoy-Wolff point.
 
 It follows from Montel's theorem that $ \mathcal{J}(g)\subset \partial\mathbb{D} $. Note also that, if $ g $ has finite degree, then it is a Blaschke product, and thus a rational map. In this case, the definition of the Fatou and Julia sets agrees with the usual one for rational functions. Moreover, the following holds.
 \begin{lemma}{\bf (Properties of Fatou and Julia sets, {\normalfont\cite[Lemma 8]{BakerDominguez}, \cite[Thm. 2.34]{Bargmann}
 		})}\label{lemma-properties-inner-function}
 	Let $ g\colon\mathbb{D}\to\mathbb{D} $ be an inner function. Then,
 	\begin{enumerate}[label={\em(\alph*)}]
 		\item $ g(\mathcal{F}(g))\subset \mathcal{F}(g) $;
 		\item if $ g $ is non-Möbius, then $ \mathcal{J}(g) $ is a perfect set;
 		\item if $ g $ is non-rational, then $ \mathcal{J}(g)=\overline{\bigcup_{n} \textrm{\em sing}(g^n)}$.
 	\end{enumerate}
 \end{lemma}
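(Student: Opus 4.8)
The plan is to take the three parts in the order (a), (c), (b): the first two are essentially formal once one uses the characterisation recorded above, that $\mathcal{F}(g)$ is exactly the set of points near which the iterates are well-defined and converge locally uniformly to the Denjoy--Wolff point $p$, together with $\mathcal{J}(g)\subset\dD$ closed and the reflection symmetry $g(1/\bar z)=1/\overline{g(z)}$ of the maximal extension; the genuine work is the perfectness in (b). For \textbf{(a)}, let $z\in\mathcal{F}(g)$ and $w=g(z)$. On a neighbourhood $V\ni z$ every $g^n$ is analytic and $g^n\to p$ locally uniformly, so $g$ is a non-constant analytic map at $z$, hence open, and its image covers a neighbourhood $W\ni w$. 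If $g'(z)\neq0$, pick the local inverse branch $\sigma\colon W\to V$ with $\sigma(w)=z$; then $g^m(\zeta)=g^{m+1}(\sigma(\zeta))$ for $\zeta\in W$, and since $\sigma$ carries compact subsets of $W$ into compact subsets of $V$ on which $g^{m+1}\to p$ uniformly, we get $g^m\to p$ locally uniformly on $W$, i.e. $w\in\mathcal{F}(g)$. If $z$ is a critical point (an isolated occurrence), the same computation applied to the finitely many local preimages of each $\zeta\in W\setminus\{w\}$ gives uniform convergence on $W\setminus\{w\}$, which extends across $w$ by continuity in the spherical metric. Hence $g(\mathcal{F}(g))\subset\mathcal{F}(g)$.

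For \textbf{(c)}, the inclusion $\supseteq$ is immediate: if $\xi\in\mathcal{F}(g)$ then, by definition of the holomorphic Fatou set, \emph{every} iterate $g^n$ is analytic near $\xi$, so $\xi\notin\mathrm{sing}(g^n)$; contrapositively $\bigcup_n\mathrm{sing}(g^n)\subset\mathcal{J}(g)$, and taking closures gives $\supseteq$. For $\subseteq$, let $\zeta\in\dD\setminus\overline{\bigcup_n\mathrm{sing}(g^n)}$ and choose a round neighbourhood $N$ of $\zeta$ on which every $g^n$ is analytic. On $N\cap\DD$ the iterates take values in $\DD$, hence are uniformly bounded and converge to $p$; by the reflection relation $g^n(1/\bar z)=1/\overline{g^n(z)}$ the functions $1/g^n$ are uniformly bounded on $N\setminus\overline{\DD}$, and the two estimates match across the singularity-free arc by analyticity. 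Thus $\{g^n\}$ is normal on $N$ in the spherical sense, and every subsequential limit, being the constant $p$ on the open set $N\cap\DD$, is identically $p$; therefore $g^n\to p$ locally uniformly near $\zeta$ and $\zeta\in\mathcal{F}(g)$. This proves $\mathcal{J}(g)\subset\overline{\bigcup_n\mathrm{sing}(g^n)}$.

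For \textbf{(b)}, $\mathcal{J}(g)$ is closed by definition, so it suffices to exclude isolated points. First $\mathcal{J}(g)$ is infinite: a finite Blaschke product of degree $\geq2$ is a rational map of degree $\geq2$ with infinite Julia set by the classical theory, while for non-rational $g$ the singularities of the iterates lie in $\mathcal{J}(g)$ by (c) and already form an infinite set, since an infinite-degree inner function has infinitely many boundary preimages of any singularity, producing new singularities of $g^2, g^3,\dots$. Being an infinite closed subset of the compact set $\dD$, $\mathcal{J}(g)$ possesses a non-isolated point, so its accumulation set $\mathcal{J}'$ is nonempty; moreover $\mathcal{J}'$ is closed and, because $g$ is an open local branched cover (so that images and preimages of approximating sequences again approximate), completely invariant. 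I then spread non-isolation using the standard blowing-up property: the backward orbit of a non-exceptional point of $\mathcal{J}'$ is dense in $\mathcal{J}(g)$ and remains in $\mathcal{J}'$ by backward invariance, whence $\mathcal{J}(g)\subseteq\overline{\mathcal{J}'}=\mathcal{J}'$; equivalently, the non-isolated points are dense in $\mathcal{J}(g)$, which is incompatible with the existence of any isolated point. Hence $\mathcal{J}(g)$ is perfect.

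The formal parts (a) and (c) carry little risk. \textbf{The main obstacle is the spreading step in (b)}: one must justify the density of backward orbits (the blowing-up/Montel argument together with complete invariance) for the \emph{maximal meromorphic extension} of $g$, handling the at most finitely many exceptional points and the singularities of $g$ on $\dD$ with care, rather than relying on the globally defined rational setting.
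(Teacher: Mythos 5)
First, a point of reference: the paper does not prove this lemma at all --- it is quoted verbatim from \cite[Lemma 8]{BakerDominguez} and \cite[Thm.~2.34]{Bargmann} --- so there is no in-paper proof to compare against. Judged on its own merits, your part (a) and the inclusion $\bigcup_n\mathrm{sing}(g^n)\subset\mathcal{J}(g)$ in (c) are fine, but the reverse inclusion in (c) contains a genuine gap, and it is the load-bearing step (your proof of (b) in the non-rational case routes through it via the density of backward orbits).

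The flawed step is ``the two estimates match across the singularity-free arc by analyticity, thus $\{g^n\}$ is normal on $N$.'' Having $|g^n|\le 1$ on $N\cap\mathbb{D}$ and $|1/g^n|\le 1$ on $N\smallsetminus\overline{\mathbb{D}}$, with each $g^n$ analytic across the arc, does \emph{not} imply normality on $N$: non-normality can live entirely on $N\cap\partial\mathbb{D}$. A concrete counterexample is the paper's own Example \ref{ex-dp-finite-deg}: for $g(z)=\frac{3z^2+1}{3+z^2}$ every iterate is analytic on all of $\widehat{\mathbb{C}}$, the two boundedness estimates hold near every $\zeta\in\partial\mathbb{D}$, and yet $\mathcal{J}(g)=\partial\mathbb{D}$. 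The tell is that your argument never invokes non-rationality, whereas the statement is false for finite Blaschke products (your argument would give $\mathcal{J}(g)=\overline{\emptyset}=\emptyset$ for them). The correct route, and the one in the cited sources, is to extract three omitted values: if every $g^{n}$ is analytic on $N$, then $g^n(N)$ is disjoint from $\mathrm{sing}(g^m)$ for all $m,n$ (a point of $N$ mapped by the open map $g^n$ onto a singularity of $g^m$ would be a singularity of $g^{n+m}$); since $g$ is non-rational, $\mathrm{sing}(g)\neq\emptyset$ and, by Lemma \ref{lemma-sing}, $\bigcup_m\mathrm{sing}(g^m)$ is infinite, so Montel's theorem applies on $N$ and the limit is identified as the constant $p$ from the behaviour on $N\cap\mathbb{D}$. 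With (c) repaired this way, your outline of (b) (infinitude of $\mathcal{J}(g)$, complete invariance of the derived set, spreading by density of backward orbits) goes through along standard lines --- so the ``main obstacle'' is not where you located it, but in (c).
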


It is well-known that, if $ g $ is doubly parabolic, then $ \mathcal{J}(g)=\partial\mathbb{D} $ \cite[Thm. 2.24]{Bargmann}. For non-Möbius hyperbolic and simply parabolic inner functions, it may happen $ \mathcal{J}(g)=\partial\mathbb{D} $, as well as $ \mathcal{J}(g)\neq\partial\mathbb{D} $, see \cite[Sect. 2.5]{Bargmann}  (both situations can happen also for inner functions associated with non-univalent Baker domains of entire function; see \cite[Ex. 3.6]{Bargmann}, where $  \mathcal{J}(g)=\partial\mathbb{D} $; and \cite{BergweilerZheng}, where $  \mathcal{J}(g)\neq\partial\mathbb{D} $).

We need the following additional properties.
\begin{lemma}{\bf (Preimages near singularities,  {\normalfont \cite[Lemma 5 and Corollary]{BakerDominguez}})}\label{lemma-sing}
	Let $ g\colon\mathbb{D}\to\mathbb{D} $ be an inner function, and let $ \zeta\in\textrm{\em sing}(g) $.  Then, for every $ \xi\in\partial\mathbb{D} $ and every neighbourhood $ U $ of $ \zeta $, there exists $ \eta\in U\cap\partial\mathbb{D} $ such that $ g^*(\eta)=\xi $.
\end{lemma}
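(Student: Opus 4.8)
The plan is to reduce the statement about radial limits to a statement about values genuinely attained on the boundary, and then to produce such boundary points by lifting a well-chosen curve that terminates at $\xi$. First I would record the elementary observation that at a regular boundary point $\eta\in\partial\mathbb{D}\setminus\textrm{sing}(g)$ the inner function $g$ continues analytically across $\partial\mathbb{D}$ and maps the circle into the circle: indeed $|g^*|=1$ a.e.\ forces $|g|\equiv 1$ on the regular arc by continuity of the analytic extension, so $g^*(\eta)=g(\eta)$. Consequently it suffices to exhibit, inside the arc $I\coloneqq U\cap\partial\mathbb{D}$, a point $\eta$ at which $g$ has curvilinear limit $\xi$ along a curve landing at $\eta$; Lindelöf's theorem then upgrades this to the radial limit $g^*(\eta)=\xi$.

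The mechanism for producing such $\eta$ is preimage lifting, and the key input is that $g$ takes values arbitrarily close to $\xi$ at points accumulating at $\zeta$. I would obtain this from Frostman's theorem: for every $a\in\mathbb{D}$ outside a set of logarithmic capacity zero, $g_a\coloneqq(g-a)/(1-\overline a\,g)$ is a Blaschke product, and since $g_a=M_a\circ g$ for a Möbius automorphism $M_a$ of $\mathbb{D}$, one has $\textrm{sing}(g_a)=\textrm{sing}(g)\ni\zeta$. As the singularities of a Blaschke product are exactly the accumulation points of its zeros, the $a$-points of $g$ accumulate at $\zeta$; in particular every neighbourhood of $\zeta$ contains $a$-points of $g$. (When $\zeta$ is isolated this is just the Great Picard theorem, but Frostman also handles non-isolated singularities, e.g.\ when $\textrm{sing}(g)$ is a Cantor set.)

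Now I would set up the lift so that it cannot escape $U$. Write $\widehat\partial(U\cap\mathbb{D})=I\cup c$, where $c$ is a crosscut compactly contained in $\mathbb{D}$; then $|g|\le 1-\delta$ on $c$ for some $\delta>0$. Choose $a$ close to $\xi$ with $1-\delta<|a|<1$, avoiding both the capacity-zero Frostman set and the countable set of critical values of $g$, and pick a curve $\gamma\colon[0,1)\to\mathbb{D}$ with $\gamma(0)=a$, $\gamma(t)\to\xi$, staying inside $\{|w|>1-\delta\}$ and avoiding critical values. Starting from an $a$-point $z_0\in U\cap\mathbb{D}$ (which exists by the previous paragraph), lift $\gamma$ to a curve $\tilde\gamma$ with $\tilde\gamma(0)=z_0$; since $|g|=|\gamma|>1-\delta$ along $\tilde\gamma$ while $|g|\le 1-\delta$ on $c$, the lift can never meet the crosscut and therefore remains in $U\cap\mathbb{D}$. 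As $t\to 1$ we have $|g(\tilde\gamma(t))|\to 1$, forcing $\tilde\gamma(t)\to\partial\mathbb{D}$; the only boundary reachable without crossing $c$ is $I$, so $\tilde\gamma$ lands at some $\eta\in I=U\cap\partial\mathbb{D}$ along which $g(\tilde\gamma(t))=\gamma(t)\to\xi$, and the first paragraph gives $g^*(\eta)=\xi$, as desired.

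The step I expect to be the main obstacle is precisely the confinement of the lift to $U$: producing interior $a$-points near $\zeta$ is soft, but an a priori arbitrary preimage curve could wander far from $\zeta$ before landing. The barrier argument --- using that $|g|$ is bounded away from $1$ on the compact crosscut $c$ while $|g|$ stays close to $1$ along the lift --- is what pins the landing point $\eta$ inside $U$. Making this interact cleanly with the generic avoidance of critical values (so that the lift genuinely exists), and ensuring the lift converges to a single endpoint $\eta$ so that Lindelöf's theorem applies to yield an honest radial limit rather than merely a cluster value, are the delicate technical points that the plan would have to discharge carefully.
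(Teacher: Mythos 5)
The paper does not actually prove this lemma: it is quoted verbatim from Baker--Domínguez (Lemma 5 and its Corollary), so the only question is whether your argument stands on its own. Your overall strategy --- Frostman's theorem to produce $a$-points of $g$ accumulating at $\zeta$, lifting a curve that terminates at $\xi$, and Lindelöf's theorem to convert the curvilinear limit into a radial one --- is the classically correct route, and your first two paragraphs are sound (in particular, $\textrm{sing}(g_a)=\textrm{sing}(g)$ and the identification of singularities of a Blaschke product with accumulation points of its zeros are both fine).

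The gap is in the confinement step, which you rightly identify as the crux but then resolve with a barrier that does not exist. You need $|g|\le 1-\delta$ on a curve $c$ with $\widehat{\partial}(U\cap\mathbb{D})=I\cup c$. Such a $c$ is a genuine crosscut, hence not compactly contained in $\mathbb{D}$: its two endpoints are the endpoints of the arc $I$ and lie on $\partial\mathbb{D}$, where $|g|$ is in general not bounded away from $1$ (if the endpoints are regular points of $g$, then $|g|\to 1$ there), so $\sup_c|g|=1$ and no $\delta$ works; if instead $c$ is kept compactly inside $\mathbb{D}$, then $I\cup c$ no longer separates $U\cap\mathbb{D}$ from the rest of the disk and the lift can escape through the gaps near the endpoints of $I$. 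A concrete test case is the atomic inner function $g(z)=\exp\bigl((z+1)/(z-1)\bigr)$ with $\textrm{sing}(g)=\{1\}$: here $|g(z)|=\exp\bigl(-\tfrac{1-|z|^2}{|1-z|^2}\bigr)$, so the sublevel set $\{|g|\le 1-\delta\}$ is exactly a closed horodisk internally tangent to $\partial\mathbb{D}$ at the singularity, and the region $\{|g|>1-\delta\}$ in which your lift must live is a single component whose closure contains the whole unit circle. Thus membership in $\{|g|>1-\delta\}$ gives no control whatsoever on where the lift lands, and your construction only shows that $\xi$ is a radial limit value of $g$ at \emph{some} point of $\partial\mathbb{D}$, not at a point of $U\cap\partial\mathbb{D}$. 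This localization is precisely the nontrivial content of the cited result and has to come from a different mechanism. (The secondary issue you flag --- that the lift lands at a single point --- is real but standard: if the end of the lift accumulated on a nondegenerate arc, $g$ would tend to the constant $\xi$ along a sequence of Koebe arcs and would therefore be constant.)
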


\begin{lemma}{\bf (Iterated preimages are dense in the Julia set)}\label{lemma-preimages-inner}
Let $ g\colon\mathbb{D}\to\mathbb{D} $ be a non-Möbius inner function, and let $ \zeta\in\partial\mathbb{D}$. Then, 
	\[\overline{\bigcup_{n\geq 0} \left\lbrace \xi\in\partial\mathbb{D}\colon (g^n)^*(\xi)=\zeta\right\rbrace} \supset\mathcal{J}(g).\]
\end{lemma}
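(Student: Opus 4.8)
The plan is to prove the inclusion pointwise: I will show that for every $x\in\mathcal{J}(g)$ and every neighbourhood $W$ of $x$ in $\partial\mathbb{D}$ (equivalently in $\widehat{\mathbb{C}}$), the set $W$ contains some $\eta$ with $(g^n)^*(\eta)=\zeta$ for some $n\geq 0$. Since $\mathcal{J}(g)\subset\partial\mathbb{D}$, this yields the statement. The argument splits according to whether $g$ has finite or infinite degree, i.e. whether $g$ is a non-Möbius finite Blaschke product or a non-rational inner function.

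Assume first that $g$ is non-rational. Then Lemma~\ref{lemma-properties-inner-function}(c) gives $\mathcal{J}(g)=\overline{\bigcup_n \textrm{sing}(g^n)}$, so given $x\in\mathcal{J}(g)$ and a neighbourhood $W$ of $x$ I can pick $m\geq 1$ and a singularity $\sigma\in\textrm{sing}(g^m)$ with $\sigma\in W$. Since compositions of inner functions are inner, $g^m$ is itself an inner function and $\sigma$ is one of its boundary singularities. Applying Lemma~\ref{lemma-sing} to the inner function $g^m$, the singularity $\sigma$, the target $\zeta\in\partial\mathbb{D}$, and the neighbourhood $W$ of $\sigma$, I obtain $\eta\in W\cap\partial\mathbb{D}$ with $(g^m)^*(\eta)=\zeta$. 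This $\eta$ lies in $W$ and belongs to the $m$-th term of the union, finishing this case.

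For the remaining case, $g$ is a finite Blaschke product of degree $d\geq 2$ (degree $1$ being Möbius). Here $g$ extends to a rational map, there are no boundary singularities, the radial extension is simply the restriction $(g^n)^*=g^n|_{\partial\mathbb{D}}$, and $\mathcal{J}(g)\subset\partial\mathbb{D}$. Two observations reduce matters to classical rational dynamics. First, every preimage of a point $\zeta\in\partial\mathbb{D}$ lies on $\partial\mathbb{D}$: since $g(\mathbb{D})\subset\mathbb{D}$ and, by the reflection symmetry $g(1/\bar z)=1/\overline{g(z)}$, $g$ maps $\widehat{\mathbb{C}}\smallsetminus\overline{\mathbb{D}}$ into itself, no preimage of a boundary point can be off the circle. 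Second, $g|_{\partial\mathbb{D}}$ is a degree-$d$ covering of the circle (its derivative does not vanish there), so $\zeta$ has exactly $d^n$ preimages under $g^n$, all on $\partial\mathbb{D}$; in particular its backward orbit is infinite and $\zeta$ is not an exceptional point of $g$. The classical fact that the backward orbit of a non-exceptional point is dense in the Julia set of a rational map of degree $\geq 2$ then gives $\mathcal{J}(g)\subset\overline{\bigcup_n g^{-n}(\zeta)}=\overline{\bigcup_n\{\xi\in\partial\mathbb{D}:(g^n)^*(\xi)=\zeta\}}$, as required.

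The only genuine subtlety, and hence the step deserving most care, is the localization in the non-rational case: one must know that singularities of the iterates accumulate at every Julia point (supplied by Lemma~\ref{lemma-properties-inner-function}(c)) and that Lemma~\ref{lemma-sing} may legitimately be invoked for $g^m$ rather than $g$ (supplied by the fact that iterates of inner functions are inner). In the rational case the single point to check is the non-exceptionality of $\zeta$, which the boundary-covering and reflection observations settle; the rest is standard.
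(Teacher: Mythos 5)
Your proof is correct and follows essentially the same route as the paper: for infinite degree, combine $\mathcal{J}(g)=\overline{\bigcup_n \mathrm{sing}(g^n)}$ (Lemma \ref{lemma-properties-inner-function}(c)) with Lemma \ref{lemma-sing} applied to the iterates $g^m$, and for finite degree invoke the standard density of backward orbits for rational maps. You merely spell out two details the paper leaves implicit (that $g^m$ is again inner, and that a boundary point of a degree-$d\geq 2$ Blaschke product is non-exceptional), both of which are handled correctly.
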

\begin{proof}
	In the case when $ g $ has finite degree, it follows from the standard theory of Fatou and Julia sets. In the case when $ g $ has infinite degree, there exists at least one singularity of $ g $, and \[\mathcal{J}(g)=\overline{\bigcup\limits_{n\geq0} \textrm{ sing}(g^n)},\] by Lemma \ref{lemma-properties-inner-function}(c). By Lemma \ref{lemma-sing}, each singularity is approximated by radial preimages of every point in $ \partial\mathbb{D} $, and the lemma follows.
\end{proof}

\subsection{Inverse branches for inner functions at points in the unit circle}\label{subsect-inverse-branches}

Far from the singularities, the inner function $ g $ is holomorphic and exhibits a strong symmetry, since it maps the unit circle to itself locally conformally. This can be used to give precise estimates on the distortion of the radial segment in terms of Stolz angles under  the inverse branches of an inner function.

In this section, consider the inner function to be defined in the upper half-plane, that is, $ h\colon\mathbb{H}\to\mathbb{H} $, and let $ \infty $ be the Denjoy-Wolff point of $ h $. We use the same notation $ h $ for the maximal meromorphic extension of the inner function\[h\colon\widehat{\mathbb{C}}\smallsetminus\textrm{sing}(h)\to \widehat{\mathbb{C}}.\]

As usual, we say that $ v\in\widehat{\mathbb{C}} $ is a {\em regular value} of $ h $  if there exists $ \rho>0 $ such that all inverse branches $ H_1 $ of $ h $ are well-defined in $ D(v,\rho) $.
{\em Singular values} of $ h $, denoted by $ SV(h) $, are defined as the set of values which are not regular. It can be seen that the set of singular values coincides with the closure of the set of critical and asymptotic values  (see \cite[Sect. 2.6, 4]{Jov24} for a wider explanation on singular values of inner functions). Since $ h $ is symmetric along the real line, one shall consider
\[SV(h,\mathbb{H})\coloneqq SV(h)\cap\mathbb{H}.\]
Note that, for inner functions associated to Fatou components, the following inclusion holds
\[ \varphi(SV(h)\cap\mathbb{H})\subset SV(f)\cap U.\]

\begin{prop}{\bf (Singular values in $ \mathbb{R} $,	{\normalfont\cite[Prop. 4.2]{Jov24}})}\label{prop-SVinner}
Let $ h\colon\mathbb{H}\to\mathbb{H} $ be an inner function, and let $ x\in\mathbb{R} $. The following are equivalent.
\begin{enumerate}[label={\em(\alph*)}]
	\item There exists a crosscut $ C $, with crosscut neighbourhood $ N_C $, such that $ x\in\partial N_C $ and $ SV(h, \mathbb{H})\cap N_C=\emptyset $.
	\item $ x $ is regular.
\end{enumerate}
\end{prop}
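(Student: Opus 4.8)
The plan is to prove the two implications separately, treating $(b)\Rightarrow(a)$ as routine and concentrating on $(a)\Rightarrow(b)$. I would first record the reformulation that, since $SV(h)$ is closed (it is the closure of the critical and asymptotic values), a point $v$ is regular exactly when $v\notin SV(h)$, equivalently when $D(v,\rho)\cap SV(h)=\emptyset$ for some $\rho>0$: indeed, a critical or asymptotic value inside a disk prevents some inverse branch from being single-valued and holomorphic there. With this, $(b)\Rightarrow(a)$ is immediate: if $x$ is regular, pick $\rho$ with $D(x,\rho)\cap SV(h)=\emptyset$, let $C$ be the upper half of $\partial D(x,\rho)$ and $N_C=D(x,\rho)\cap\mathbb{H}$; then $x\in\partial N_C$ and $SV(h,\mathbb{H})\cap N_C\subset SV(h)\cap D(x,\rho)=\emptyset$.

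For $(a)\Rightarrow(b)$ the target is $SV(h)\cap D(x,\epsilon)=\emptyset$ for small $\epsilon$. Since $N_C$ is a crosscut neighbourhood with $x$ in the interior of the boundary arc $\partial N_C\cap\mathbb{R}$, it contains a half-disk $D(x,\epsilon)\cap\mathbb{H}$, so $SV(h)\cap\big(D(x,\epsilon)\cap\mathbb{H}\big)=\emptyset$. Because $h$ is symmetric with respect to $\mathbb{R}$ (reflection principle), $SV(h)$ is symmetric about $\mathbb{R}$, hence $SV(h)$ also misses the lower half-disk. Thus $SV(h)\cap\big(D(x,\epsilon)\setminus\mathbb{R}\big)=\emptyset$. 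There are moreover no \emph{critical} values on $\mathbb{R}$: a critical point on $\mathbb{R}\setminus\textrm{sing}(h)$ would have local degree $k\ge2$, so the argument of $h(z)-h(c)$ would sweep an interval of length $k\pi>\pi$ as $z$ runs through a half-neighbourhood in $\mathbb{H}$, forcing $h$ to take values in the lower half-plane and contradicting $h(\mathbb{H})\subset\mathbb{H}$. Hence critical values lie in $\mathbb{H}\cup\mathbb{H}^-$ and the only possible singular values left in $D(x,\epsilon)$ are \emph{real asymptotic values} on $I=(x-\epsilon,x+\epsilon)$.

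Excluding real asymptotic values is the crux, and the step I expect to be the main obstacle. Suppose $x'\in I$ is an asymptotic value; shrinking $\delta$ so that $D(x',\delta)\setminus\mathbb{R}$ avoids $SV(h)$, take an asymptotic path $\gamma\subset\mathbb{H}$ (by symmetry) with $h(\gamma)\to x'$ and $\gamma$ leaving every compact subset of $\widehat{\mathbb{C}}\setminus\textrm{sing}(h)$. The components of $h^{-1}\big(D(x',\delta)\cap\mathbb{H}\big)$ are disjoint and, as $h(\gamma)\to x'$, the tail of $\gamma$ is trapped in a single one, say $V$; since the half-disk is simply connected and free of singular values, $h|_V$ is a conformal bijection onto it. Reflecting $V$ across the real arc of $\partial V$ (where $h$ is real-valued) and applying Schwarz reflection, I obtain a domain $W$ with $h|_W\colon W\to D(x',\delta)\setminus\{x'\}$ a conformal bijection onto the punctured disk. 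Now $(h|_W)^{-1}$ is an \emph{injective} holomorphic map on a punctured disk, so by the great Picard theorem it has no essential singularity at $x'$; therefore $(h|_W)^{-1}(w)$ tends to a \emph{single} point $\zeta\in\widehat{\mathbb{C}}$ (and $\gamma\to\zeta$). Consequently $W$ contains a full punctured neighbourhood of $\zeta$ on which $h$ is bounded, and Riemann's removable singularity theorem extends $h$ holomorphically across $\zeta$ with $h(\zeta)=x'$. This yields $\zeta\notin\textrm{sing}(h)$, contradicting that $\gamma\to\zeta$ leaves every compact subset of the domain. Hence no real asymptotic value lies in $I$.

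Combining the three steps, $D(x,\epsilon)$ meets neither the critical nor the asymptotic values; since this set is disjoint from the open disk, so is its closure, giving $SV(h)\cap D(x,\epsilon)=\emptyset$ and the regularity of $x$. The delicate points to write carefully are the reduction to a single sheet $V$ (using that the sheets are separated and $h(\gamma)\to x'$ keeps $\gamma$ inside one of them), the identification of the tip $\zeta$ with $\textrm{sing}(h)$, and the Picard/removability argument that pins the tip to one point; the remaining estimates are standard.
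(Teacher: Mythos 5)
First, note that the paper does not prove this proposition: it is quoted verbatim from \cite[Prop.\ 4.2]{Jov24}, so there is no internal proof to compare against. Judged on its own terms, your reduction is sound up to and including the exclusion of real critical values: the reformulation ``regular $\Leftrightarrow$ $D(x,\rho)\cap SV(h)=\emptyset$'' is correct since $SV(h)$ is closed, the implication (b)$\Rightarrow$(a) is indeed immediate, the symmetry $h(\bar z)=\overline{h(z)}$ does clear the lower half-disk, and the local-degree argument correctly shows that $h$ has no critical points on $\mathbb{R}\setminus\mathrm{sing}(h)$ and no critical values on $\mathbb{R}$. You also correctly identify the exclusion of real asymptotic values as the crux. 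The problem is that your mechanism for that step does not go through.

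The gap is the assertion that reflecting $V$ yields a domain $W$ with $h|_W\colon W\to D(x',\delta)\setminus\{x'\}$ a conformal \emph{bijection}. Two things are being assumed there that are essentially equivalent to the statement being proved. (i) To continue $\psi=(h|_V)^{-1}$ across $J\setminus\{x'\}$ you need every point of $J\setminus\{x'\}$ to be a regular value; but hypothesis (a) only excludes singular values from the open half-disk, and a priori $SV(h)\cap\mathbb{R}$ could be a perfect set accumulating on $x'$ from both sides, so no punctured interval around $x'$ free of real singular values is available. Moreover, ``the real arc of $\partial V$ where $h$ is real-valued'' may be empty or fail to map onto $J\setminus\{x'\}$: the nontangential limits of $\psi$ on $J$ lie in $\mathbb{R}\cup\{\infty\}$, but nothing yet prevents them from landing in $\mathrm{sing}(h)$, in which case there is no arc to reflect across. (ii) Even granting that $\psi$ continues analytically throughout $D(x',\delta)\setminus\{x'\}$, this set is an annulus, and the continuation around the puncture need not return to the same branch; if it does not, $x'$ is a logarithmic singularity of $h^{-1}$, $h$ restricted to the resulting chain of components is a \emph{universal covering} of the punctured disk rather than a bijection, $\psi$ is not single-valued, and the Picard/removability argument has nothing to act on. Ruling out exactly this logarithmic behaviour over real points is the actual content of (a)$\Rightarrow$(b), so the argument is circular at the decisive moment. (A smaller, fixable issue: the claim that the asymptotic path can be taken inside $\mathbb{H}$ needs an argument --- the path may run along $\mathbb{R}$ or oscillate between the half-planes --- though here a conjugation-plus-perturbation argument does work, using that $h$ is a local diffeomorphism of $\mathbb{R}\setminus\mathrm{sing}(h)$.) As it stands, then, the proof is incomplete precisely where the proposition is nontrivial; you should consult the argument in \cite{Jov24} rather than rely on the reflection--Picard route.
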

Given the singular values $ SV(h, \mathbb{H}) $, one can consider the {\em postsingular set} \[ P(h, \mathbb{H})\coloneqq\overline{\bigcup\limits_{s\in SV(h, \mathbb{H})}\bigcup\limits_{n\geq 0} h^n(s)}.\] It is clear that if there exists a crosscut neighbourhood of $ x\in \mathbb{R} $ disjoint from $ P(h, \mathbb{H}) $, then all inverse branches $ H_n $ of $ h^n $ are well-defined in some disk around $ x $.
Moreover, the following holds.
\begin{prop}{\bf (Compactly contained singular values,	{\normalfont\cite[Corol. 4.3, Prop. 5.2]{Jov24}})}\label{prop-compactlySV}
	Let $ h\colon\mathbb{H}\to\mathbb{H} $ be an inner function, with Denjoy-Wolff point $ \infty\in\partial\mathbb{H} $. Assume singular values of $ h $ in $ \mathbb{H} $ are compactly contained in $ \mathbb{H} $. Then, there exists $ r>0 $ such that for every $ x\in  \mathbb{R}$,  $D(x,r)\cap P(h)=\emptyset $. Moreover, the Denjoy-Wolff point is not a singularity for $ h $.
\end{prop}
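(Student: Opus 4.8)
The plan is to establish the two conclusions separately. Fix a compact $K\subset\mathbb{H}$ containing $SV(h,\mathbb{H})$, and choose $\delta>0$ and $M>0$ with $\mathrm{Im}\,s\geq\delta$ and $|s|\leq M$ for every $s\in SV(h,\mathbb{H})$. I first record that $SV(h)$ avoids a whole strip around $\mathbb{R}$ and a neighbourhood of $\infty$: for any $x\in\mathbb{R}$ the half-disk $D(x,\delta)\cap\mathbb{H}$ is a crosscut neighbourhood of $x$ disjoint from $SV(h,\mathbb{H})$ (its points have imaginary part $<\delta$), so $x$ is regular by Proposition \ref{prop-SVinner}; similarly the crosscut neighbourhood $\{z\in\mathbb{H}:|z|>M\}$ of $\infty$ is free of singular values. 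By the reflection symmetry of $h$ across $\mathbb{R}$, the full set $SV(h)$ then lies in $\{w:|w|\leq M,\ |\mathrm{Im}\,w|\geq\delta\}$.

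The key mechanism for the first conclusion is the monotonicity $\mathrm{Im}\,h(z)\geq\mathrm{Im}\,z$ for all $z\in\mathbb{H}$. This is Julia's lemma at the Denjoy--Wolff point: every horodisk $\{\mathrm{Im}>c\}$ at $\infty$ is forward invariant, $h(\{\mathrm{Im}>c\})\subseteq\{\mathrm{Im}>c\}$, and letting $c\nearrow\mathrm{Im}\,z$ yields the inequality; equivalently it follows from the Herglotz representation $\mathrm{Im}\,h(z)=a\,\mathrm{Im}\,z+\int_{\mathbb{R}}\frac{\mathrm{Im}\,z}{|t-z|^2}\,d\mu(t)$ together with $a\geq 1$, the latter because $a$ is the angular derivative of $h$ at the boundary fixed point $\infty$. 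Consequently, for every $s\in SV(h,\mathbb{H})$ the forward orbit stays in $\mathbb{H}$ and $\mathrm{Im}\,h^{n}(s)\geq\mathrm{Im}\,s\geq\delta$ for all $n\geq 0$. Hence $P(h,\mathbb{H})\cap\mathbb{C}\subseteq\{\mathrm{Im}\geq\delta\}$, and by symmetry the postsingular set satisfies $P(h)\cap\mathbb{C}\subseteq\{|\mathrm{Im}|\geq\delta\}$. Taking any $r<\delta$, every disk $D(x,r)$ with $x\in\mathbb{R}$ lies in the strip $\{|\mathrm{Im}|<\delta\}$ and contains no point of $P(h)$, which proves the first assertion.

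For the second conclusion, recall from the first paragraph that $\infty$ is a regular value, so all inverse branches of $h$ are defined on some neighbourhood $\{|z|>R\}$ of $\infty$. Consider the forward-invariant horodisk $H_c=\{\mathrm{Im}>c\}$ with $c>M$: it contains no singular values and $h(H_c)\subseteq H_c$, so $H_c\subseteq h^{-1}(H_c)$, and since $H_c$ is simply connected and singular-value-free, the component $W$ of $h^{-1}(H_c)$ containing $H_c$ is mapped conformally onto $H_c$ by $h$. Thus $h|_{H_c}$ is a univalent self-map of $H_c$ with Denjoy--Wolff point $\infty$, and its conformal inverse $\Phi=(h|_W)^{-1}\colon H_c\to W$ satisfies $\Phi(H_c)=W\supseteq H_c$. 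Passing to the coordinate $w=-1/z$, which sends the Denjoy--Wolff point to $0$, makes this map bounded on a tangential neighbourhood of the fixed boundary point; since $h$ maps $\mathbb{R}$ into $\mathbb{R}$, one upgrades the Carathéodory boundary extension of the conformal branch to a holomorphic extension across $\infty$ by Schwarz reflection. Therefore $\infty\notin\mathrm{sing}(h)$.

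I expect the second assertion to be the main obstacle, precisely because it requires bridging two a priori different notions: ``$\infty$ is a regular value'', which concerns the inverse branches (the range side of $h$), and ``$\infty$ is not a singularity'', which concerns analytic continuation of $h$ itself (the domain side). The conformal inverse branch fixing $\infty$ furnishes this bridge, but one must check carefully that $\infty$ is genuinely a fixed prime end of that branch and that the extension obtained is holomorphic rather than merely continuous; the real-symmetry of $h$ and reflection across $\mathbb{R}$ are what secure analyticity. A more structural alternative is to invoke Cowen's theorem (Theorem \ref{teo-cowen}): when $SV(h,\mathbb{H})$ is compactly contained, the absorbing domain can be taken to be an honest neighbourhood of $\infty$ on which the linearizing coordinate is conformal, forcing $\infty$ to be an ordinary (attracting, in the hyperbolic case) fixed point of a holomorphic extension of $h$.
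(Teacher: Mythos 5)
First, a point of comparison: the paper does not actually prove this proposition --- it is imported from \cite{Jov24} (Corol.\ 4.3 and Prop.\ 5.2 there) --- so there is no in-text proof to measure you against. Your argument for the first assertion is correct and clean: Wolff's lemma at the Denjoy--Wolff point $\infty$ gives $\mathrm{Im}\, h(z)\geq \mathrm{Im}\, z$ on all of $\mathbb{H}$, hence the forward orbit of $SV(h,\mathbb{H})\subset\{\mathrm{Im}\geq\delta\}$ never leaves the closed set $\{\mathrm{Im}\geq\delta\}$ and any $r<\delta$ works. (Your appeal to Proposition \ref{prop-SVinner} at the point $\infty$ requires a Möbius conjugation, since that proposition is stated for $x\in\mathbb{R}$, but this is harmless.)

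The second assertion is where there is a genuine gap. You build a component $W\supseteq H_c=\{\mathrm{Im}>c\}$ of $h^{-1}(H_c)$ on which $h$ is conformal onto $H_c$, and then claim that reflecting the inverse branch yields an analytic continuation of $h$ across $\infty$. But a horodisk is not a neighbourhood of $\infty$ in $\overline{\mathbb{H}}$: such a neighbourhood is $\{|z|>R\}\cap\overline{\mathbb{H}}$ and contains points of arbitrarily small imaginary part, and $W$ need not contain any set of this form (for an inner function $\mathrm{Im}\, h(x+i\epsilon)\to 0$ as $\epsilon\to 0$ for a.e.\ $x\in\mathbb{R}$, so $h^{-1}(H_c)$ cannot cover $\{|z|>R\}\cap\mathbb{H}$). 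Whether $\infty\in\mathrm{sing}(h)$ is decided precisely in the region your construction does not see: $\infty$ is a singularity if and only if the zeros of the Blaschke factor of $h$ (equivalently, the preimages of a fixed interior point) or the support of its singular measure accumulate at $\infty$, and by the very inequality $\mathrm{Im}\, h\geq \mathrm{Im}$ those zeros lie in a bounded horizontal strip, so they can only accumulate at $\infty$ tangentially, inside $\{|z|>R\}\setminus H_c$. Univalence of $h$ on a high horodisk is perfectly compatible with $\infty$ being a singularity --- the function $g(z)=z-\frac{\cot z}{2}$ of Example \ref{ex-dp-infinite-deg} is univalent on $\{\mathrm{Im}>c\}$ for large $c$ (it is a small perturbation of $z+i/2$ there), yet $\infty$ is its singularity --- so the inference from your steps 2--3 to the conclusion is invalid. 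The same objection defeats the ``structural alternative'' via Cowen's theorem: the absorbing domain there is a one-sided subdomain of $\mathbb{H}$ and exists for every self-map with boundary Denjoy--Wolff point, including those whose Denjoy--Wolff point is a singularity, so it cannot force analytic extendability. What is missing, and what must carry the weight of \cite[Prop.\ 5.2]{Jov24}, is an argument using the compact containment of $SV(h,\mathbb{H})$ to exclude tangential accumulation at $\infty$ of preimages of interior points and of singular mass; no such argument appears in your proposal.
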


Once the domain of conformality of inverse branches is established, let us study their distortion. To that end, let us consider the {\em radial segment} and the {\em Stolz angle} at a point $ x\in \mathbb{R} $, that is
\[R_\rho(x)\coloneqq \left\lbrace z\in\mathbb{H}\colon\ \textrm{Im }w<\rho,\ \textrm{Re }w =x\right\rbrace ,\]
\[\Delta_{\rho, \alpha}(x)\coloneqq \left\lbrace z\in\mathbb{H}\colon\ \textrm{Im }w<\rho, \ \frac{\left| \textrm{Re }w-x \right| }{\textrm{Im }w}<\alpha\right\rbrace .\]
The following holds.

\begin{prop}{\bf (Control of radial limits in terms of Stolz angles, {\normalfont\cite[Sect. 4]{Jov24}})}\label{prop-radial-limits}
	Let $ h\colon\mathbb{H}\to\mathbb{H} $ be an inner function, with Denjoy-Wolff point $ \infty\in\partial{\mathbb{H}} $. Let $ x\in\mathbb{R} $.
	Assume there exists $ \rho_0>0$ such that $ D(x, \rho_0)\cap P(h)\neq \emptyset$.  Then, for all $ \alpha\in (0, \frac{\pi}{2}) $, there exists $ \rho_1\coloneqq \rho_1(\alpha, \rho_0)<\rho_0 $ such that all branches $ H_n $ of $ h^{-n} $ are well-defined in $ D(x, \rho_1) $ and, for all $ \rho<\rho_1 $,\[H_n(R_\rho(x))\subset \Delta_{\alpha,\rho}(H_n(x)).\]
\end{prop}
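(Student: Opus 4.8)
\emph{The plan is} to reduce the claim to a single, uniform distortion estimate for the family of inverse branches and then read off the angular (Stolz) control from the Koebe distortion theorem. First note that the hypothesis must read $D(x,\rho_0)\cap P(h)=\emptyset$ (the segment being \emph{disjoint} from the postsingular set), as in the remark preceding the statement. Under this assumption, since $SV(h^n)\subseteq P(h)$ for every $n$, the disk $D(x,\rho_0)$ avoids all singular values of every iterate, so each branch $H_n$ of $h^{-n}$ is well-defined and univalent on the \emph{same} disk $D(x,\rho_0)$. Moreover $h$ commutes with complex conjugation (it is extended across $\mathbb{R}$ by reflection) and preserves $\mathbb{H}$; hence so does each $H_n$, which gives $H_n(x)\in\mathbb{R}$ and $H_n'(x)\in(0,\infty)$.

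\emph{Next I would} normalise and invoke Koebe. Put $\tilde H_n(z)=(H_n(z)-H_n(x))/H_n'(x)$, univalent on $D(x,\rho_0)$ with $\tilde H_n(x)=0$ and $\tilde H_n'(x)=1$. Transporting $D(x,\rho_0)$ to $\mathbb{D}$ and applying the Koebe distortion theorem yields a \emph{universal} constant $C>0$, independent of $n$, with
\[ \bigl|\tilde H_n(x+u)-u\bigr|\ \le\ \frac{C}{\rho_0}\,|u|^2 \qquad\text{for } |u|\le \tfrac{\rho_0}{2}. \]
This uniformity is the whole point: the estimate holds simultaneously for all $n$ precisely because every $H_n$ lives on the one fixed disk $D(x,\rho_0)$.

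\emph{Then} I would evaluate along the radial segment. For $z=x+it\in R_\rho(x)$ with $\rho\le\rho_0/2$, the bound above gives $H_n(x+it)-H_n(x)=H_n'(x)\bigl(it+E_n(t)\bigr)$ with $|E_n(t)|\le (C/\rho_0)t^2$. Since $H_n'(x)>0$ is real, the horizontal displacement of $H_n(z)$ from the real point $H_n(x)$ is $H_n'(x)\,\mathrm{Re}\,E_n(t)$, while its height is $H_n'(x)\bigl(t+\mathrm{Im}\,E_n(t)\bigr)\asymp H_n'(x)\,t$ for $t$ small. Hence the tangent of the approach angle satisfies
\[ \frac{\bigl|\mathrm{Re}\,E_n(t)\bigr|}{t+\mathrm{Im}\,E_n(t)}\ \le\ \frac{(C/\rho_0)\,t^2}{t/2}\ =\ \frac{2C}{\rho_0}\,t. \]
Choosing $\rho_1:=\min\{\rho_0/2,\ \alpha\rho_0/(2C)\}$, which depends only on $\alpha$ and $\rho_0$, forces this ratio below $\alpha$ for every $t<\rho_1$ and every $n$, giving $H_n(R_\rho(x))\subset\Delta_{\alpha,\rho}(H_n(x))$. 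The matching of the truncation heights of $R_\rho$ and $\Delta_{\alpha,\rho}$ is then handled by the same comparability $\mathrm{Im}\,H_n(z)\asymp H_n'(x)\,t$, after possibly shrinking $\rho_1$.

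\emph{The main obstacle} is securing the distortion bound \emph{uniformly in} $n$, and this is exactly what disjointness from $P(h)$ provides: it places the entire family $\{H_n\}_n$ on a common disk, so a single application of the Koebe theorem governs all branches at once. The only other point demanding care is the bookkeeping of the truncation heights, resolved by the comparability of $\mathrm{Im}\,H_n(z)$ with $H_n'(x)\,t$ noted above.
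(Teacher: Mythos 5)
The paper itself gives no proof of this proposition; it is imported from \cite[Sect.~4]{Jov24}, so there is no in-text argument to compare against. Your reading of the hypothesis as $D(x,\rho_0)\cap P(h)=\emptyset$ is the right one (the ``$\neq$'' is a typo), and your Koebe-based route is the natural reconstruction: disjointness from $P(h)$ places every branch $H_n$ on the single disk $D(x,\rho_0)$, the normalized branches satisfy $|\tilde H_n(x+u)-u|\le C|u|^2/\rho_0$ with $C$ independent of $n$, and since $H_n(x)\in\mathbb{R}$ and $H_n'(x)>0$ this controls the angle at which $H_n(x+it)$ approaches $H_n(x)$, which is the heart of the statement.

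There is, however, one genuine gap: the truncation height. You need $\operatorname{Im}H_n(x+it)<\rho$ for all $t<\rho<\rho_1$, and you propose to obtain it from $\operatorname{Im}H_n(x+it)\asymp H_n'(x)\,t$ ``after possibly shrinking $\rho_1$''. Shrinking $\rho_1$ cannot work, because the obstruction is scale-invariant: the ratio $\operatorname{Im}H_n(x+it)/t\approx H_n'(x)$ does not improve as $t\to 0$, and nothing in the Koebe argument bounds $H_n'(x)$ above by $1$ --- univalence on a fixed disk gives no upper bound on the derivative at the center, and a priori $H_n'(x)$ could grow with $n$. (That this is not a phantom worry is shown by $h(z)=z/2$ on $\mathbb{H}$, whose Denjoy--Wolff point is $0$ rather than $\infty$: the inverse branches send $R_\rho(0)$ to $R_{2^n\rho}(0)$, so the height bound fails badly.) The missing ingredient is Wolff's lemma at the Denjoy--Wolff point $\infty$: every horodisk $\{\operatorname{Im}z>c\}$ is forward invariant, hence $\operatorname{Im}h^n(z)\ge\operatorname{Im}z$ for all $z\in\mathbb{H}$ and all $n$; applied to $z=H_n(x+it)$ this gives $\operatorname{Im}H_n(x+it)\le t<\rho$ directly, and also $0<H_n'(x)\le 1$, which lets you keep your denominator estimate. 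With that one line added your argument is complete; note that this is the only place where the hypothesis that $\infty$ is the Denjoy--Wolff point enters, so it cannot be dispensed with.
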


Throughout the paper we shall use Proposition \ref{prop-radial-limits} for inner functions defined in the unit disk. Defining the radial segment and the Stolz angle as its preimages by the conformal map\[M\colon\mathbb{D}\to\mathbb{H}\hspace{0.5cm} z\mapsto i\frac{1+z}{1-z},\] the same inclusion relations are satisfied. We use the same symbols to denote the radial segment and the Stolz angles in $ \mathbb{D} $ and in $ \mathbb{H} $; this should not cause any trouble to the reader since it is clear from the context. See Figure \ref{fig-SVinner}.

 	\begin{figure}[htb!]\centering
	\includegraphics[width=16cm]{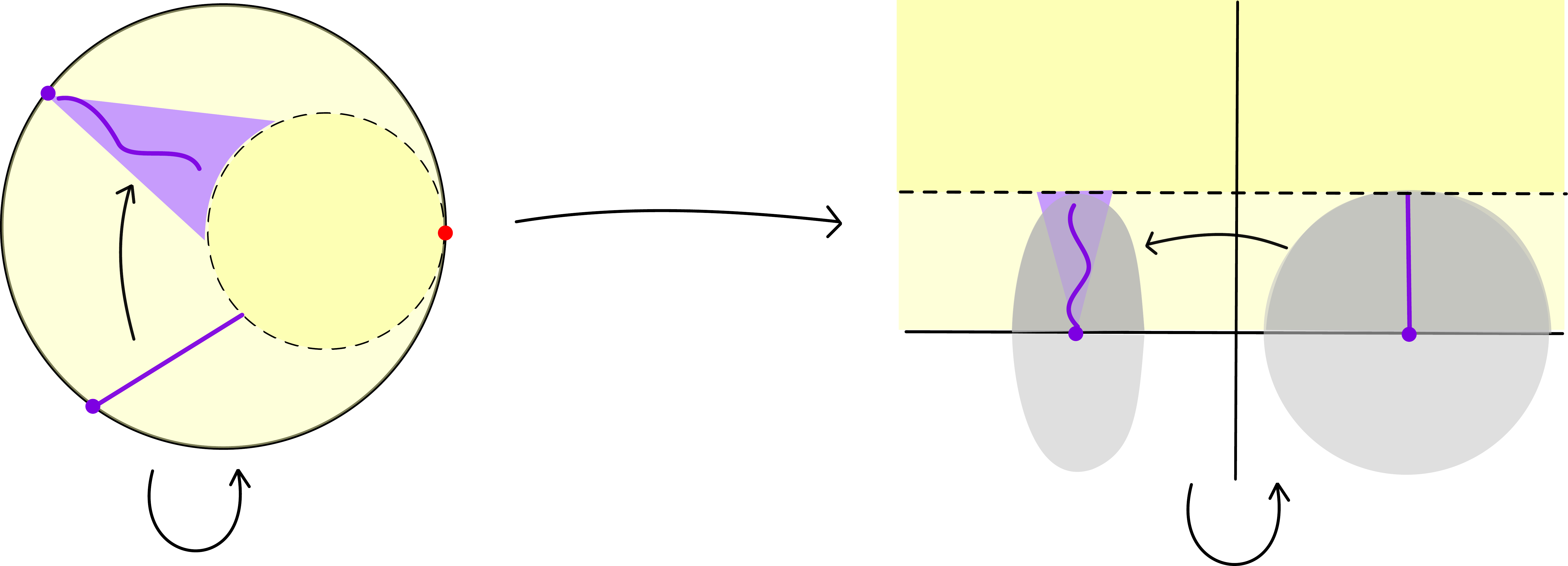}
	\setlength{\unitlength}{16cm}
	\put(-0.71, 0.21){\small$1$}
	\put(-0.88, -0.01){$g$}
	\put(-0.22, -0.02){$h$}
	\put(-0.6, 0.24){$M$}
		\put(-0.62, 0.2){\footnotesize$z\mapsto i\frac{1+z}{1-z}$}
	\put(-0.958, 0.33){$\mathbb{D}$}
	\put(-0.03, 0.34){$ \mathbb{H}$}
	\put(-0.24, 0.215){\small$H_1$}
		\put(-0.11, 0.125){\small$x$}
			\put(-0.33, 0.12){\small$H_1(x)$}
	\put(-0.95, 0.19){\small$G_1$}
	\caption{\footnotesize Consider the inner function $ h\colon\mathbb{H}\to\mathbb{H} $, with Denjoy-Wolff point $ \infty $. Assuming that there are no singular values in some crosscut neighbourhood of $  x\in\mathbb{R}$ (grey), inverse branches are well-defined in a disk around $ x $ (light grey), and one can control the distortion on the radial segment in terms of Stolz angles (purple). The results transfer straight-forward to the unit circle by means of the Möbius transformation $ M $.}\label{fig-SVinner}
\end{figure}

\subsection{Boundary behaviour of the Riemann map}\label{subsect-Riemann} In order to transfer the previous results on the iteration of inner functions on the unit circle $ \partial \mathbb{D} $, we need a deep understanding of  the Riemann map $ \varphi\colon\mathbb{D}\to U $. 
We collect here the results concerning the boundary behaviour of Riemann maps needed in this paper (concerning both general domains and simply connected Fatou components), and refer to \cite{Pom92} for a wider exposition on the topic. In the sequel we assume $ U\subsetneq\mathbb{C} $; this can be achieved without loss of generality by postcomposing $ \varphi $ by a Möbius transformation (or, in the case of Fatou components of functions in class $ \mathbb{K} $, by conjugating  by a Möbius transformation).

\begin{defi}{\bf (Radial limits and cluster sets)}\label{defi-radial-lim}
	Let $ \varphi\colon \mathbb{D}\to U \subsetneq\mathbb{C}$ be a Riemann map and let $ \xi\in\partial \mathbb{D} $. 
	\begin{itemize}
		\item The {\em radial limit} of $ \varphi $ at $ \xi $ is defined to be $\varphi^* (\xi )\coloneqq\lim\limits_{r\to 1^-}\varphi(r\xi)$.
		\item $ \varphi $ has {\em angular limit} $ \varphi^* (\xi )$ if, for any Stolz angle $ \Delta $ at $ \xi $, $ f(z)\to \varphi^* (\xi ) $  when $ z\to\xi $ in $ \Delta $.
		\item The {\em cluster set} $ Cl(\varphi, \xi) $ of $ \varphi $ at $ \xi $ is the set of values $ w\in\widehat{\mathbb{C}} $ for which there is a sequence $ \left\lbrace z_n\right\rbrace _n \subset\mathbb{D}$ such that $ z_n\to \xi $ and $\varphi (z_n)\to w $, as $ n\to\infty $.
		
		\noindent More generally, for $ K\subset\partial \mathbb{D} $, the {\em cluster set} $ Cl(\varphi, K) $ is the set of values $ w\in\widehat{\mathbb{C}} $ for which there is a sequence $ \left\lbrace z_n\right\rbrace _n \subset\mathbb{D}$ such that $ z_n\to K$ and $\varphi (z_n)\to w $, as $ n\to\infty $.
	\end{itemize}
\end{defi}

It follows from the Lehto-Virtanen Theorem \cite[Sect. 4.1]{Pom92} that the angular limit exists whenever the radial limit exists. The following is  a classical result by Beurling \cite[Thm. 9.19]{Pom92}.
\begin{thm}{\bf (Existence of radial limits)}\label{thm-capacity-Riemann-map}
	Let $ \varphi\colon\mathbb{D}\to U\subsetneq\mathbb{C} $ be a Riemann map. Then, for all $ \xi\in\partial\mathbb{D} $ apart from a set of logarithmic capacity zero, the radial limit $ \varphi^* $ exists and its finite.
\end{thm}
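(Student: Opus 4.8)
The plan is to deduce the statement from Beurling's theorem for functions of finite Dirichlet integral, which I would prove by a capacity--energy argument. I would use the standard energy characterisation of logarithmic capacity: a Borel set $E\subset\partial\mathbb{D}$ has capacity zero if and only if it supports no probability measure $\mu$ with finite logarithmic energy $I(\mu)=\iint\log\frac{1}{|s-t|}\,d\mu(s)\,d\mu(t)$. Hence it suffices to show that the exceptional set $E\coloneqq\{\xi\in\partial\mathbb{D}\colon \varphi^*(\xi)\text{ does not exist or is infinite}\}$ supports no measure of finite energy.

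First I would reduce to a map of finite Dirichlet integral. Since $U\subsetneq\mathbb{C}$, fix $a\notin U$; then $\varphi-a$ is zero-free, so $\eta\coloneqq 1/(\varphi-a)$ is a univalent map omitting $0$, and its boundary behaviour determines that of $\varphi=a+1/\eta$. When $a$ can be chosen in the interior of $\mathbb{C}\setminus U$, the image $\eta(\mathbb{D})$ is bounded and the Dirichlet integral $\iint_{\mathbb{D}}|\eta'|^2\,dm$ equals the finite area of $\eta(\mathbb{D})$. Beurling's theorem applied to $\eta$ gives a finite radial limit $\eta^*$ off a capacity-zero set, since finite radial variation forces convergence to a finite value. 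Consequently $\varphi^*$ exists and is finite off that set, except possibly on $\{\eta^*=0\}$, which is precisely the set of radial accesses of $U$ to $\infty$; the finiteness clause thus reduces to showing this last set has capacity zero.

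The core is Beurling's capacity estimate for a function $g$ with finite Dirichlet integral. Arguing by contradiction, suppose the set where the radial variation $\Phi(\xi)\coloneqq\int_0^1|g'(r\xi)|\,dr$ is infinite has positive capacity, and choose a probability measure $\mu$ on it with $I(\mu)<\infty$. The goal is an inequality of the form $\int_{\partial\mathbb{D}}\Phi\,d\mu\le C\,\big(\iint_{\mathbb{D}}|g'|^2\,dm\big)^{1/2}I(\mu)^{1/2}$, obtained by pairing $|g'|$ against the radial sweep of $\mu$ and applying Cauchy--Schwarz with a weight built from the logarithmic potential of $\mu$, so that one factor is controlled by the Dirichlet integral and the other by the energy. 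Finiteness of the right-hand side forces $\Phi<\infty$ for $\mu$-almost every $\xi$, contradicting $\Phi\equiv\infty$ on the support of $\mu$. Hence the bad set has capacity zero, and where $\Phi<\infty$ the radial limit exists (and, by the Lehto--Virtanen theorem, equals the angular limit).

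The step I expect to be the main obstacle is the passage to the genuinely unbounded case together with the finiteness clause. When $\mathbb{C}\setminus U$ has empty interior (for instance a slit domain) no reciprocal normalisation yields a bounded image, so the area identity is unavailable; there one must localise the Dirichlet integral to boxes abutting $\partial\mathbb{D}$ away from the accesses to $\infty$, and run Beurling's estimate locally. Establishing that the set of accesses to $\infty$, equivalently $\{\eta^*=0\}$ in the bounded model, has zero logarithmic capacity requires a separate argument in the same spirit (a capacity-version of Privalov's uniqueness theorem, or an extremal-length estimate for the curve family escaping to $\infty$ in $U$). Finally, the weighted Cauchy--Schwarz pairing is the delicate technical point, since the weight must simultaneously dominate the Dirichlet and the energy factors.
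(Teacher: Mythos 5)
The paper does not prove this statement at all: it is quoted as a classical theorem of Beurling with a pointer to \cite[Thm.~9.19]{Pom92}, so there is no internal argument to compare against and you are in effect reproving the cited result. Your core estimate is the right one: sweeping $\mu$ radially into $\mathbb{D}$, pairing $|g'|$ against the swept measure, and applying Cauchy--Schwarz with the density of the sweep as weight does give $\int\Phi\,d\mu\le C\,D(g)^{1/2}I(\mu)^{1/2}$ (the weight squared integrates, via Fubini and $\iint_{\mathbb{D}}|z-\zeta_1|^{-1}|z-\zeta_2|^{-1}dm(z)\asymp\log\tfrac{1}{|\zeta_1-\zeta_2|}$, to the energy), and this is genuinely the engine of Beurling's theorem.

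The gap is exactly where you place it, and it is not a side case here: the reduction via $\eta=1/(\varphi-a)$ yields a bounded image, hence a finite Dirichlet integral, only when $a$ can be chosen at positive distance from $U$, i.e.\ when $\mathbb{C}\setminus U$ has nonempty interior. For the Baker domains to which the theorem is applied in this paper the complement of $U$ is essentially the Julia set and typically has empty interior (Cantor bouquets, unions of hairs), so the case you defer is the only one that occurs. The classical way to close it is not to localise the Dirichlet integral of $\eta$ but to change variables logarithmically: with $a\notin U$, the map $g=\log(\varphi-a)$ is univalent, and since $e^{g}=\varphi-a$ is injective the area of $g(\mathbb{D})$ inside each vertical strip $\{n\le \mathrm{Re}\,w<n+1\}$ is bounded by an absolute constant $c_0$ (pull back the area of the annulus $\{e^{n}\le|v|<e^{n+1}\}$ by $e^{w}$). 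One then applies the Beurling estimate to the portion of each radius lying over a fixed block of strips: radial variation $t$ accumulated over a region of area $A$ forces the corresponding boundary set to have capacity at most $C e^{-\pi t^{2}/A}$. Failure of a finite radial limit of $\varphi$ at $\xi$ forces either infinite variation of $g(r\xi)$ over some single strip (capacity zero, and a countable union of such sets) or, in the case $\varphi(r\xi)\to\infty$, a crossing of $N$ consecutive strips with variation at least $N$ against area at most $c_0N$, giving capacity at most $Ce^{-\pi N/c_0}\to0$; this last computation is what disposes of the finiteness clause, which in your bounded model is the set $\{\eta^{*}=0\}$. So your ``localise to boxes'' instinct is correct, but the localisation must be performed in the image of the logarithm, where univalence supplies the uniform per-strip area bound; without that bound the strip-by-strip Dirichlet integrals have nothing finite to be summed against, and the weighted Cauchy--Schwarz step does not close.
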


In particular, radial limits exist and are different $ \lambda $-almost everywhere.

We shall not discuss here the concept of logarithmic capacity (for which we refer to \cite[Chap. 9]{Pom92}), but keep in mind the following properties. First,  the notion of logarithmic capacity is usually defined  for compact subsets of the plane, but the notion extends to Borel sets \cite[Thm. 9.12]{Pom92}; in particular, a Borel set has logarithmic capacity zero if it does not contain any compact set of positive capacity.
Recall that sets of logarithmic capacity zero are extremely thin: they cannot contain non-degenerate continua, and its Hausdorff dimension is zero \cite[Thm. 10.1.3]{Pom92}. Moreover, the union of countably many sets of capacity zero has capacity zero \cite[Corol. 9.13]{Pom92}, and, if $ \varphi $ is a Möbius transformation and $ E $ has capacity zero, then $ \varphi(E) $ has also capacity zero.

\begin{defi}{\bf (Accessible point)}
	Given an open subset $ U\subset\widehat{\mathbb{C}} $, 
	a point $ v\in\widehat{\partial} U $ is \textit{accessible} from $ U $ if there is a path $ \gamma\colon \left[ 0,1\right) \to U $ such that $ \lim\limits_{t\to 1^-} \gamma(t)=v $. We also say that $ \gamma $ \textit{lands} at $ v $. 
\end{defi}

It is clear that $ v\in\widehat{\partial} U $ with $ \varphi^*(\xi) =v$ for some $ \xi\in\partial \mathbb{D} $ is accessible. The following result bounds the set of  points on the unit circle which have the same radial limit.

\begin{thm}{\bf({\normalfont \cite[Corol. 2.19]{Pom92}})}\label{thm-pommerenke-biaccess} Let $ \varphi\colon \mathbb{D} \to\mathbb{C} $ be a homeomorphism. Then, there are at most countably many points $ a\in\widehat{\mathbb{C}} $ such that $ \varphi^*(\xi_j)=a $ for three distinct points $ \xi_j\in\partial\mathbb{D} $.
\end{thm}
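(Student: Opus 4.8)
The plan is to reduce the statement to a separation property of the complementary continuum $K:=\widehat{\mathbb{C}}\setminus U$, where $U:=\varphi(\mathbb{D})$, and then to control the resulting junction points by planar topology. First I note that, by invariance of domain, $U$ is open, and since it is a homeomorphic image of $\mathbb{D}$ it is simply connected; hence $K$ is a continuum (compact and connected in $\widehat{\mathbb{C}}$). By the Lehto--Virtanen theorem the radial limit $\varphi^*(\xi)$ agrees with the angular limit wherever it exists, so I may work with the radial arcs $C_j:=\varphi(\{r\xi_j:0\le r<1\})$. Given a point $a$ with three distinct radial preimages $\xi_1,\xi_2,\xi_3$, these three arcs land at $a$ and, by injectivity of $\varphi$, are pairwise disjoint except at $w_0:=\varphi(0)$; thus $\Gamma_a:=C_1\cup C_2\cup C_3\cup\{a\}$ is a theta-graph topologically embedded in $\widehat{\mathbb{C}}$. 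The Jordan curve theorem (equivalently, Euler's formula) gives exactly three complementary Jordan faces $\Omega_1,\Omega_2,\Omega_3$, and one checks that $\Omega_j\cap U=\varphi(S_j)$, where $S_1,S_2,S_3$ are the three sectors into which the radial segments divide $\mathbb{D}$.

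Next I would show that, apart from countably many exceptional points, each such $a$ is a junction point of $K$ of order at least three. The degenerate situation is when some circular arc $A_j\subset\partial\mathbb{D}$ bounding $S_j$ has $Cl(\varphi,A_j)=\{a\}$; since $\varphi^*$ is single-valued, distinct points $a$ with this property are attained over pairwise disjoint open arcs of $\partial\mathbb{D}$, of which there can be only countably many, so these contribute at most countably many points. In the remaining case each $Cl(\varphi,A_j)$ is a nondegenerate continuum contained in $\overline{\Omega_j}$ and meeting $K$ inside the open face $\Omega_j$. Because $\partial\Omega_j\subset\Gamma_a\subset U\cup\{a\}$, any arc in $K$ joining two different faces must pass through $a$; hence $K\setminus\{a\}$ splits into (at least) the three nonempty, mutually separated pieces $K\cap\Omega_j$. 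Equivalently, the three access arcs $C_j\subset U$ exhibit $a$ as the centre of a triod feeding three distinct components of $K$.

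Finally I would invoke Moore's triod theorem: any family of pairwise disjoint triods in the plane is at most countable. Attaching to each non-degenerate triple point the triod formed by the terminal sub-arcs of $C_1,C_2,C_3$ near $a$ yields a family of triods indexed by the triple points, and the theorem is meant to bound its cardinality. The main obstacle lies precisely here: the triods attached to different triple points all live in $U$ and need not be pairwise disjoint a priori, whereas Moore's theorem requires disjointness (and a naive count of order-$\geq 3$ points of a continuum would fail, as a Cantor comb shows). The substantive step is therefore to thin the family to a pairwise disjoint subfamily, exploiting that the three accesses at each point occupy three distinct faces and that $\widehat{\mathbb{C}}$ is second countable, so that an uncountable set of triple points would produce uncountably many disjoint triods and contradict Moore. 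The reductions of the first two paragraphs are routine; this disjointness extraction is where the real care is required.
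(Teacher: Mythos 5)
This statement is quoted in the paper from Pommerenke \cite[Corol.\ 2.19]{Pom92} without proof, so there is no in-paper argument to compare with; judged on its own, your proposal picks the right endgame (Moore's triod theorem) and sets up the correct triods, but it stops exactly at the point you label the ``substantive step'': you leave the pairwise disjointness of the triods as an unresolved obstacle requiring a ``thinning'' or ``disjointness extraction'' argument. In fact there is nothing to extract. If $a\neq a'$ are two triple points, their sets of radial preimages $\{\xi_1,\xi_2,\xi_3\}$ and $\{\xi_1',\xi_2',\xi_3'\}$ are disjoint, because a radial limit, where it exists, is a single value. Hence the six radial segments, truncated to $r_0\le r<1$ so as to avoid the origin, are pairwise disjoint in $\mathbb{D}$, and injectivity of $\varphi$ makes their images pairwise disjoint in $U$; since $a$ and $a'$ are distinct and lie outside $U$, the triods $T_a=\{a\}\cup\bigcup_j\varphi(\{r\xi_j\colon r_0\le r<1\})$ and $T_{a'}$ are already disjoint. (Discard the at most one point $a=\infty$ and apply Moore's theorem in the plane.) This is precisely the observation you already made for a single triple --- distinct radii meet only at $0$ --- applied across triples, so the step you flag as ``where the real care is required'' is immediate.

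Once this is seen, essentially all of your first two paragraphs are superfluous for this route: the continuum $K$, the theta-graph faces $\Omega_j$, the cluster sets of the boundary arcs, and the splitting of $K\smallsetminus\{a\}$ into three separated pieces are needed only if one argues instead via the countability of the points at which a continuum is disconnected into three or more pieces --- a different (also classical) proof --- and mixing the two routes is what manufactured the apparent difficulty. Two smaller points: the Lehto--Virtanen theorem concerns holomorphic (or normal) functions and does not apply to a bare homeomorphism, but you do not need it, since the hypothesis is stated for radial limits and the radial arcs can be used directly; and your handling of the degenerate case $Cl(\varphi,A_j)=\{a\}$ is correct but likewise unnecessary for the triod argument.
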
 



Finally, given a Riemann map $ \varphi\colon\mathbb{D}\to U $, $ \varphi(0)=z\in U $, the {\em harmonic measure} $ \omega_U(z, \cdot) $, defined on the Borel $ \sigma $-algebra of $ \partial U $ is the push-forward of the normalized Lebesgue measure on $ \partial \mathbb{D} $,  denoted by $ \lambda $. Note that it depends on the basepoint $ z $; however,  since we are only interested in studying ergodicity and recurrence of $ f|_{\partial U} $ (which only depend on sets of zero mesure, which in their turn do not depend on the basepoint),  we consider the harmonic measure on $ \partial U $ without specifying the basepoint, and denote it by $ \omega_U $.

\subsubsection*{Boundary behaviour of the Riemann map for Fatou components}
In the particular case of Fatou components,  the radial extension of the  Riemann map allows to extend the conjugacy $ \varphi \circ f=g\circ \varphi$ to the boundary. First note the following direct consequence of Theorem \ref{thm-capacity-Riemann-map} (note that singularities of $ f\in\mathbb{K} $ are countable by definition).
\begin{lemma}{\bf (Existence of radial limits)}\label{lemma-existence-radial-limits}
	Let $ f\in\mathbb{K} $, let $ U $ be a simply connected Fatou component, and let $ \varphi\colon \mathbb{D}\to U $ be a Riemann map. Then, for all $ \xi\in\partial\mathbb{D} $ apart from a set of logarithmic capacity zero, the radial limit $ \varphi^* $ exists and belongs to $ \Omega (f) $.
\end{lemma}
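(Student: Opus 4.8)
The plan is to combine Theorem~\ref{thm-capacity-Riemann-map} applied to $\varphi$ itself with the same theorem applied to the auxiliary Riemann maps obtained by postcomposing $\varphi$ with the Möbius transformations that send each \emph{finite} singularity of $f$ to $\infty$. Recall that, since $f\in\mathbb{K}$, the singularity set $E(f)$ is compact and countable, and that $E(f)\subset\mathcal{J}(f)$ while $U\subset\mathcal{F}(f)$; in particular $\varphi(z)\neq e$ for every $z\in\mathbb{D}$ and every $e\in E(f)$. Since $\Omega(f)=\widehat{\mathbb{C}}\smallsetminus E(f)$, it suffices to show that the set
\[ B\coloneqq\left\lbrace \xi\in\partial\mathbb{D}\colon \varphi^*(\xi)\ \text{does not exist, or is infinite, or lies in }E(f)\right\rbrace \]
has logarithmic capacity zero.

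First I would apply Theorem~\ref{thm-capacity-Riemann-map} directly to $\varphi$: off a set $Z_0\subset\partial\mathbb{D}$ of capacity zero the radial limit $\varphi^*(\xi)$ exists and is finite. This already disposes of the $\xi$ for which $\varphi^*$ fails to exist or is infinite. It remains to control $(\varphi^*)^{-1}\big(E(f)\cap\mathbb{C}\big)$, which is only an issue when $f$ is not entire (for entire $f$ one has $E(f)=\{\infty\}$ and the lemma is immediate from finiteness of the radial limit). Enumerate $E(f)\cap\mathbb{C}=\{e_1,e_2,\dots\}$ and, for each $k$, set $T_k(w)\coloneqq 1/(w-e_k)$ and $\psi_k\coloneqq T_k\circ\varphi$. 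Since $e_k\notin U$, the map $\psi_k\colon\mathbb{D}\to\psi_k(U)\subseteq\mathbb{C}\smallsetminus\{0\}\subsetneq\mathbb{C}$ is again a Riemann map, so Theorem~\ref{thm-capacity-Riemann-map} provides a capacity-zero set $Z_k$ off which $\psi_k^*$ exists and is finite. Now if $\varphi^*(\xi)=e_k$ then, by continuity of $T_k$ on $\widehat{\mathbb{C}}$ together with $T_k(e_k)=\infty$, the radial limit $\psi_k^*(\xi)=\infty$; hence such $\xi$ must lie in $Z_k$, that is, $(\varphi^*)^{-1}(e_k)\subseteq Z_k$.

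Putting the pieces together, $B\subseteq Z_0\cup\bigcup_{k}Z_k$ is a countable union of sets of logarithmic capacity zero and therefore has capacity zero itself. Off $B$ the radial limit $\varphi^*(\xi)$ exists, is finite, and avoids $E(f)$, so it belongs to $\mathbb{C}\smallsetminus E(f)\subseteq\Omega(f)$, as required. The only genuinely delicate point is the passage from ``radial limit equal to the finite singularity $e_k$'' to ``radial limit equal to $\infty$'', which is exactly what the conjugation by $T_k$ achieves: it converts the accumulation at a finite value into an instance of the infinite/non-existent radial limits controlled by Beurling's theorem. Everything else is bookkeeping with the countability of $E(f)$ and the stability of capacity-zero sets under countable unions.
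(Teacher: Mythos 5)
Your proposal is correct and is essentially the paper's argument: the paper derives the lemma directly from Theorem~\ref{thm-capacity-Riemann-map} together with the countability of $E(f)$ and the stability of capacity-zero sets under countable unions, which is exactly what your Möbius-conjugation trick makes explicit for each finite singularity. The only cosmetic slip is writing $\psi_k\colon\mathbb{D}\to\psi_k(U)$ where you mean $\psi_k(\mathbb{D})=T_k(U)$; the mathematics is unaffected.
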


Then, the conjugacy $ \varphi \circ f=g\circ \varphi$  extends to the boundary as follows: if $ \xi\in\partial\mathbb{D} $ and $ \varphi^*(\xi) \in\Omega(f)$, then $ g^*(\xi)$ and $ \varphi^*(g^*(\xi)) $ exist, and \[ f(\varphi^*(\xi))=\varphi^*(g^*(\xi))\] \cite[Lemma 5.5]{Jov24}.
In other words, the diagram \[\begin{tikzcd}
\partial	U \arrow{r}{f} & \partial U \\	
	\mathbb{D} \arrow{r}{g^*} \arrow{u}{\varphi^*} & \mathbb{D}\arrow[swap]{u}{\varphi^*}
\end{tikzcd}
\] commutes $ \lambda $-almost everywhere (in fact, outside a set of logarithmic capacity zero). 

Although the Julia set $ \mathcal{J}(g) $ can be considered for any inner function, it is specially relevant in the case where $ g $ is the inner function associated with an unbounded Fatou component of an entire function, since it is strongly related with the set of accesses to infinity, 
\[\Theta\coloneqq \left\lbrace \xi\in\partial \mathbb{D}\colon \varphi^*(\xi)=\infty\right\rbrace. \] 
Indeed, the following holds.
\begin{thm}{\bf (Accesses to infinity and Julia sets, {\normalfont \cite[Lemma 13]{BakerDominguez}})}\label{thm-accesses-and-julia}
	Let $ f\colon\mathbb{C}\to\mathbb{C} $ be an entire function, and let $ U $ be a non-univalent Baker domain. Then, $ \mathcal{J}(g)\subset \overline{\Theta} $.
\end{thm}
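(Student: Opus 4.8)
The plan is to show that $\mathcal{J}(g)$ is contained in the closure of the $g^*$-backward orbit of a single access point $\xi_0\in\Theta$, and that this entire backward orbit already lies in $\Theta$. First I would record two preliminaries. Since $U$ is non-univalent, $g$ is non-Möbius, so by Lemma~\ref{lemma-preimages-inner} the iterated radial preimages $\bigcup_{n\ge 0}\{\xi\in\partial\mathbb{D}\colon (g^n)^*(\xi)=\zeta_0\}$ of any $\zeta_0\in\partial\mathbb{D}$ are dense in $\mathcal{J}(g)$; this handles the finite- and infinite-degree cases uniformly. Secondly, $\Theta\neq\emptyset$: taking $z_0\in U$ and a curve $\sigma$ joining $z_0$ to $f(z_0)$ inside $U$, the concatenation $\bigcup_{n\ge 0}f^n(\sigma)$ is a curve in $U$ along which $f^n\to\infty$ uniformly (as $\sigma$ is compact and $U$ is a Baker domain), so $\infty$ is accessible from $U$ and hence $\varphi^*(\xi_0)=\infty$ for some $\xi_0\in\partial\mathbb{D}$. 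I would fix such an $\xi_0$, chosen moreover to be a regular value direction for every $g^n$. Applying the density statement with $\zeta_0=\xi_0$, it then suffices to prove the value-transfer claim: every $\eta$ with $(g^n)^*(\eta)=\xi_0$ satisfies $\eta\in\Theta$.

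For the value-transfer claim I would lift the radial segment at $\xi_0$. Because $\xi_0$ is regular, the inverse branch of $g^n$ carrying $\xi_0$ back to $\eta$ continues analytically along the radius $s\mapsto s\xi_0$, producing a curve $\gamma\colon[0,1)\to\mathbb{D}$ with $g^n(\gamma(s))=s\xi_0$ and $\gamma(s)\to\eta$. Along $\gamma$, using the exact interior conjugacy $\varphi\circ g^n=f^n\circ\varphi$,
\[
f^n(\varphi(\gamma(s)))=\varphi(g^n(\gamma(s)))=\varphi(s\xi_0)\xrightarrow[s\to 1^-]{}\varphi^*(\xi_0)=\infty .
\]
Since $f$, hence $f^n$, is entire, any finite subsequential limit $w$ of $\varphi(\gamma(s))$ would force the finite limit $f^n(w)$ on the left, a contradiction; therefore $\varphi(\gamma(s))\to\infty$. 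Thus $\infty$ is an asymptotic value of $\varphi$ along the curve $\gamma$ landing at $\eta$, and as $\varphi$ is univalent (hence a normal function), Lindelöf's theorem forces the radial limit to agree: $\varphi^*(\eta)=\infty$, i.e. $\eta\in\Theta$. Combining with the density statement, $\mathcal{J}(g)\subset\overline{\bigcup_n\{(g^n)^*=\xi_0\}}\subset\overline{\Theta}$.

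The main obstacle is the landing of the lift $\gamma$ at a single boundary point $\eta$ (needed to invoke Lindelöf) together with the existence of an access point $\xi_0\in\Theta$ that is a regular value for all iterates. The landing is clear when the inverse branch of $g^n$ extends conformally across $\partial\mathbb{D}$ at $\eta$, which is exactly what a regular value direction guarantees, and is why I continue branches along the radius rather than along an arbitrary path. In the finite-degree (rational) case this is automatic away from the countably many critical-value directions. In the infinite-degree case I would obtain the relevant preimages $\eta$ near a singularity via Lemma~\ref{lemma-sing} and Lemma~\ref{lemma-properties-inner-function}(c) (which already give $\mathcal{J}(g)=\overline{\bigcup_n\mathrm{sing}(g^n)}$), and argue that the singular value directions meeting the radius at $\xi_0$ can be avoided by perturbing $\xi_0$ within $\Theta$; controlling this interaction between $\Theta$ and the postsingular directions is the delicate step. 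As a clean but weaker warm-up, worth stating first, one has $\infty\in Cl(\varphi,\zeta)$ for every $\zeta\in\mathrm{sing}(g)$: if $\varphi$ were bounded near $\zeta$ then $f\circ\varphi=\varphi\circ g$ would be bounded near $\zeta$, contradicting that near a singularity $g$ assumes values $w$ with $\varphi(w)$ of arbitrarily large modulus (as $U$ is unbounded and, by Lemma~\ref{lemma-sing}, $g$ comes arbitrarily close to every boundary point). This already yields $\mathcal{J}(g)\subset\{\zeta\colon\infty\in Cl(\varphi,\zeta)\}$ and isolates the sharpening to $\overline{\Theta}$ as the real content of the statement.
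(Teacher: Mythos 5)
First, note that the paper does not prove this statement: it is imported verbatim from \cite[Lemma 13]{BakerDominguez}, so the comparison below is against the classical argument of that reference, whose overall strategy (an invariant access curve to $\infty$, pulled back by the dynamics, with the entirety of $f$ propagating the value $\infty$ and a Lindel\"of-type argument identifying the landing point) is essentially the one you propose. Your preliminaries are sound: the dynamical access gives $\Theta\neq\emptyset$, Lemma~\ref{lemma-preimages-inner} handles finite and infinite degree uniformly, the ``entirety'' step ($f^n(\varphi(\gamma(s)))\to\infty$ forces $\varphi(\gamma(s))\to\infty$) is correct, and your warm-up that $\infty\in Cl(\varphi,\zeta)$ for $\zeta\in\mathrm{sing}(g)$ is the easy half of the statement.

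The genuine gap is the one you flag and then do not close: the value-transfer step. As written it needs (i) the inverse branch of $g^n$ carrying $\xi_0$ to $\eta$ to continue analytically along the entire radius at $\xi_0$ \emph{and up to the endpoint} $\xi_0$ itself, and (ii) the resulting lift to land at $\eta$. Your proposed remedy --- perturb $\xi_0$ within $\Theta$ to a regular value direction of every $g^n$ --- is not available at that stage of the proof: all you know then is that $\Theta$ contains one point, and there is no a priori reason why any point of $\Theta$ should avoid $\bigcup_n SV(g^n)\cap\partial\mathbb{D}$. Worse, there is a mismatch between your two halves: Lemma~\ref{lemma-preimages-inner} supplies \emph{radial} preimages $\eta$ clustering at singularities of $g^n$ --- exactly where inverse branches are least likely to extend --- while your transfer argument only covers $\eta$'s arising as landing points of lifts; these need not be the same points. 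Note also that the cheap alternative via the boundary conjugacy $f\circ\varphi^*=\varphi^*\circ (g^*)$ only shows that a radial preimage of $\xi_0$ either lies in $\Theta$ or has \emph{no} radial limit, which is not enough. The standard repair is to abandon inverse branches of $g^n$ altogether: set $\tilde\Gamma=\varphi^{-1}(\Gamma)$ for the invariant access curve $\Gamma=\bigcup_n f^n(\sigma)$, and work with connected components of $(g^n)^{-1}(\tilde\Gamma)$, which are globally defined curves in $\mathbb{D}$ requiring no analytic continuation. Your entirety argument shows $\varphi$ tends to $\infty$ along each unbounded component, and the landing of such a component at a single point of $\Theta$ then follows from the Correspondence Theorem for accesses (whose proof uses Theorem~\ref{thm-capacity-Riemann-map} to exclude a nondegenerate arc of accumulation); density of the resulting endpoints in $\mathcal{J}(g)$ comes from the blow-up of $g^n$ near its singularities, which is the content behind Lemma~\ref{lemma-sing}. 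Until the landing and regularity issues are resolved along these lines, the proof is incomplete.
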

\begin{thm}{\bf (Accesses to boundary points, {\normalfont \cite[Thm. 3.8]{Bargmann}})}\label{thm-bargmann-unique-accessibility}
	Let $ f\colon\mathbb{C}\to\mathbb{C} $ be an entire function, and let $ U $ be a  Baker domain. Let $ \varphi\colon \mathbb{D}\to U $ be a Riemann map.  Let $ \Theta_\mathbb{C}$ be the set of all $ \xi\in\partial\mathbb{D} $ such that $ \varphi^*(\xi) $ exists and it is finite; and denote by $ AP(U) $  the set of finite accessible points on $ \partial U\subset\mathbb{C} $.
	
	\noindent Then, the map \[ \Theta_\mathbb{C}\to AP(U), \hspace{0.5cm}\xi\mapsto\varphi^*(\xi),\] is a bijection.
\end{thm}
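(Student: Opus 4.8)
\emph{The plan} is to dispatch well-definedness and surjectivity by classical cluster-set theory, and to isolate injectivity as the place where the hypotheses that $f$ is entire and $U$ a Baker domain are genuinely used. First, the map is well defined: if $\xi\in\Theta_\mathbb{C}$, the radial segment $\varphi([0,1)\xi)$ lands at the finite point $\varphi^*(\xi)\in\partial U$, so $\varphi^*(\xi)\in AP(U)$. For surjectivity, let $v\in AP(U)$ and let $\gamma\colon[0,1)\to U$ land at $v$. Since $v$ is finite, $\overline{\gamma}$ is compact in $\mathbb{C}$, and $\tilde\gamma\coloneqq\varphi^{-1}\circ\gamma$ is a path in $\mathbb{D}$ whose cluster set lies in $\partial\mathbb{D}$ (as $v\notin U$). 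By the classical correspondence between accessible points and prime ends (see \cite[Ch.\ 2]{Pom92}), $\tilde\gamma$ lands at a single point $\xi\in\partial\mathbb{D}$ at which $\varphi$ has angular limit $v$; hence $\xi\in\Theta_\mathbb{C}$ and $\varphi^*(\xi)=v$. One may alternatively argue that the cluster set of $\tilde\gamma$ cannot be a nondegenerate arc, since then $\varphi$ would have radial limit $v$ on a set of positive $\lambda$-measure, contradicting that radial limits are finite and pairwise distinct $\lambda$-almost everywhere (Theorem \ref{thm-capacity-Riemann-map}).

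\emph{Injectivity: reduction.} Suppose, for a contradiction, that $\varphi^*(\xi_1)=\varphi^*(\xi_2)=v\in\mathbb{C}$ with $\xi_1\neq\xi_2$. The radial images $r_j=\varphi([0,1)\xi_j)$ are disjoint arcs in $U$ landing at the common finite point $v$, so $\Gamma\coloneqq r_1\cup r_2\cup\{v\}$ is a Jordan curve with $\Gamma\setminus\{v\}\subset U$; let $D$ be the bounded Jordan domain it bounds. The two radial segments split $\mathbb{D}$ into two sectors; let $S$ be the one whose boundary arc $A\subset\partial\mathbb{D}$ joins $\xi_1$ to $\xi_2$ with $\varphi(S)=U\cap D$. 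For $\xi$ in the open arc $A$ the radial segment is eventually contained in $S$, so every existing radial limit $\varphi^*(\xi)$, $\xi\in A$, lies in $\overline D$. As $A$ has positive $\lambda$-measure, these limits are not all equal to $v$, whence $\overline D\cap\partial U\supsetneq\{v\}$, i.e.\ $D$ contains finite boundary points of $U$. Since $U$ is simply connected and unbounded, $\widehat{\mathbb{C}}\setminus U$ is connected and contains both $\infty$ (outside $\overline D$) and a point of $D$; as it meets $\Gamma$ only at $v$, the point $v$ is forced to be a cut point of $\widehat{\mathbb{C}}\setminus U$ separating off a nondegenerate bounded piece $K_D\subset D$.

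\emph{Injectivity: the dynamical input (main obstacle).} This pinched configuration cannot be excluded by connectivity alone: it occurs for the slit plane $\mathbb{C}\setminus[0,\infty)$, which is simply connected with connected complement yet has a whole ray of finite double-access points. Hence the Baker-domain structure of an entire map is indispensable here. The lever I would use is the boundary conjugacy $f(\varphi^*(\xi))=\varphi^*(g^*(\xi))$, valid off a set of logarithmic capacity zero, together with the fact that $\lambda$-almost every point of $\partial\mathbb{D}$ converges to the Denjoy--Wolff point $p$ under $g^*$ (Theorem \ref{thm-ergodic-g}) and that $\mathcal{J}(g)\subset\overline{\Theta}$ (Theorem \ref{thm-accesses-and-julia}). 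The idea is that finite accessible points tend to escape to the dynamical access at $\infty$ under iteration, so the double access at $v$ is transported to a double access at each $f^n(v)$, with the bounded tongue $T\coloneqq U\cap D$ carried (since $f$ is entire, hence a local branched homeomorphism at every finite point of the orbit) into an orbit of pairwise disjoint bounded pinched tongues escaping to $\infty$. The crux — and the step I expect to require by far the most care — is to make this precise for the \emph{specific} pinch points $\xi_1,\xi_2$ (rather than merely $\lambda$-almost every point) and to convert the resulting accumulating family of finite multi-access tongues into a contradiction with the univalence of $\varphi$, equivalently with the distinctness of radial limits along the arcs associated with the orbit $\{f^n(v)\}$. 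It is exactly at this point that both hypotheses, entire $f$ and invariant Baker domain $U$, are essential.
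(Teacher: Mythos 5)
The paper does not actually prove this statement: it is imported verbatim from Bargmann \cite[Thm.\ 3.8]{Bargmann}, so the only question is whether your argument stands on its own. The well-definedness and surjectivity parts do: they are standard prime-end theory (the correspondence between accessible points and landing points of $\varphi^{-1}\circ\gamma$, plus the almost-everywhere distinctness of radial limits to rule out a nondegenerate cluster arc), and they hold for any simply connected domain, with no dynamics.

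The genuine gap is injectivity, which is the entire dynamical content of the theorem, and it is not proved. Your reduction to the pinched configuration (the Jordan curve $\Gamma=r_1\cup r_2\cup\{v\}$, the bounded Jordan domain $D$ with $D\cap\partial U\neq\emptyset$, $v$ a cut point of the complement) is correct, and you rightly observe that topology alone cannot finish, as the slit plane shows. But from that point on the text is a plan, not an argument: ``the crux \dots is to make this precise'' is exactly the step that constitutes Bargmann's theorem. Moreover, the heuristic you propose to drive the contradiction --- that finite accessible boundary points ``tend to escape to the dynamical access at $\infty$ under iteration'' --- is not available here. The statement covers \emph{all} Baker domains of entire functions, including doubly parabolic ones, where (as the paper itself recalls for Fatou's function $z+1+e^{-z}$) $\omega_U$-almost every boundary point has a dense orbit rather than an escaping one; and in any case $v\in\partial U\subset\mathcal{J}(f)$ is one specific point over whose orbit you have no control. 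The subsequent assertions --- that the double access at $v$ transports to a genuine double access at each $f^n(v)$, and that the tongues $f^n(U\cap D)$ form a pairwise disjoint escaping family --- are likewise unjustified: $f|_D$ is only an open map, the image curves $f^n(r_1)$ and $f^n(r_2)$ could define the same access to $f^n(v)$, and the orbit of $v$ need not leave any compact set. So the proposal establishes only the classical half of the statement; the half that needs the Baker-domain hypothesis remains unproven, and the sketched route to it would need to be replaced, not merely elaborated.
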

\begin{prop}{\bf (Disconnected cluster sets, {\normalfont\cite[Prop. 3.10]{JF23}})}\label{lemma-disconnected-cluster-sets}		Let $ f\colon\mathbb{C}\to\mathbb{C} $ be an entire function, and let $ U $ be a  Baker domain. Let $ \varphi\colon\mathbb{D}\to U $ be a Riemann map. Let $\xi\in\partial \mathbb{D} $ be such that $ Cl(\varphi, \xi)\cap\mathbb{C} $ is contained in more than one component of $ \partial U $. Then, $ \varphi^*(\xi)=\infty $, and $ Cl(\varphi, \xi)\cap\mathbb{C} $ is contained in exactly two components of $ \partial U $.
\end{prop}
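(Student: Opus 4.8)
The plan is to argue entirely through the topology of the cluster set as a continuum, together with a null-chain of crosscuts defining the prime end at $\xi$. First I would record the standing facts. The cluster set $K:=Cl(\varphi,\xi)$ is a continuum in $\widehat{\mathbb{C}}$, and it is contained in $\widehat{\partial} U$: a limit of $\varphi(z_n)$ with $z_n\to\xi$ cannot be an interior point of $U$, since $\varphi$ is a homeomorphism of $\mathbb{D}$ onto $U$. Because $U\subsetneq\mathbb{C}$ is simply connected and unbounded (being a Baker domain), $\infty\in\widehat{\partial} U$ and $\widehat{\partial} U$ is itself a continuum; hence $\partial U=\widehat{\partial} U\smallsetminus\{\infty\}$, and any two distinct components of $\partial U$ lie in different components of $\widehat{\partial} U\smallsetminus\{\infty\}$, i.e.\ they are joined inside $\widehat{\partial} U$ only through the single point $\infty$.

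With this framework, the containment $\infty\in K$ is immediate. If $\infty\notin K$, then $K=K\cap\mathbb{C}$ is a connected subset of $\partial U$, hence contained in one component, contradicting the hypothesis that $K\cap\mathbb{C}$ meets more than one component. So $\infty\in K$, and the real task is to upgrade this to the statement $\varphi^*(\xi)=\infty$ about the radial limit.

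For that upgrade I would fix a null-chain $\{C_n\}$ of crosscuts of $\mathbb{D}$ defining the prime end at $\xi$, so that the image crosscuts $\sigma_n:=\varphi(C_n)$ satisfy $\diam\sigma_n\to0$, and let $V_n$ be the component of $U\smallsetminus\sigma_n$ not containing the base point $\varphi(0)$; then $K=\bigcap_n\overline{V_n}$, so $K\subseteq\overline{V_n}$ for every $n$. The goal is to show that the prime end has $\infty$ as its unique principal point, for then the prime-end theory of Carath\'eodory (see \cite{Pom92}) guarantees that the radial (and angular) limit exists and equals that principal point. The mechanism is a separation argument: if some subsequence $\sigma_{n_k}$ converged to a finite point $w^*\in\partial U$, then — since $\varphi(0)$ lies at a fixed positive distance from $w^*$ — the region $V_{n_k}$ cut off on the side away from $\varphi(0)$ would be forced to shrink to $w^*$, whence $K\subseteq\{w^*\}$, contradicting that $K$ contains finite points in two distinct components. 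Thus $\sigma_n\to\infty$, the principal point is $\infty$, and $\varphi^*(\xi)=\infty$.

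Finally, for the count of components I would use that, since $\sigma_n\to\infty$, for large $n$ the two endpoints $x_n,y_n\in\widehat{\partial} U$ of the crosscut $\sigma_n$ both lie near $\infty$; the set $B_n:=\widehat{\partial} U\cap\overline{V_n}$ is then a subcontinuum of $\widehat{\partial} U$ containing $\infty$ whose only two ``ends'' are $x_n$ and $y_n$, so that deleting the cut point $\infty$ leaves at most two pieces, each contained in a single component of $\partial U$. As $K\smallsetminus\{\infty\}\subseteq B_n\smallsetminus\{\infty\}$ for all $n$, the cluster set meets at most two components, and with the hypothesis exactly two. The main obstacle is precisely the two facts that invoke the fine topology of the (possibly non-locally-connected) continuum $\widehat{\partial} U$: that a small-diameter crosscut isolates a small-diameter region on the side away from the fixed interior base point, and that removing $\infty$ from $B_n$ leaves at most two boundary pieces. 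Both encode the two-sidedness of a crosscut, and I expect the cleanest route is to transport the picture under the Carath\'eodory prime-end homeomorphism $\hat\varphi$ to the prime-end boundary circle, where $\sigma_n$ is the arc $C_n$ with its two endpoints on $\partial\mathbb{D}$; converting that two-sidedness into the asserted separation of $\widehat{\partial} U$ is the delicate step.
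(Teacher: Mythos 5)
The paper itself offers no proof of this proposition---it is imported verbatim from \cite[Prop.~3.10]{JF23}---so your argument can only be judged on its own terms, and it has a genuine gap at precisely the step you flag as delicate. Your framework is fine: $K=Cl(\varphi,\xi)$ is the impression $\bigcap_n\overline{V_n}$ of the prime end at $\xi$, the radial cluster set equals the set of principal points, and your first step ($\infty\in K$, since a continuum in $\widehat{\partial}U$ meeting two components of $\partial U$ must pass through the unique junction point $\infty$) is correct. But the separation mechanism ``if $\sigma_{n_k}\to w^*$ finite, then $V_{n_k}$ shrinks to $w^*$, whence $K\subseteq\{w^*\}$'' is false: it conflates the principal set with the impression. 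A null-chain of crosscuts of vanishing diameter can converge to a single finite point while the nested cut-off regions $V_n$ keep a fixed nondegenerate continuum in their closures. Concretely, let $U=\mathbb{H}\smallsetminus\bigl(T_0\cup\bigcup_{n\geq1}T_n\bigr)$ with teeth $T_n=\{1/n\}\times[1,\infty)$ and limit ray $T_0=\{0\}\times[1,\infty)$; this $U$ is unbounded and simply connected, and $\partial U$ has components $\mathbb{R}$, $T_0$, $T_1,\dots$. Taking crosscuts $\sigma_k$ that run just below the teeth tips from the tip of $T_{m_k}$, around the left of $T_0$, and end on $T_0$ at height $1+1/k$, one gets a null-chain with $\mathrm{diam}\,\sigma_k\to0$ and $\sigma_k\to i$ (a finite point), yet the impression is $T_0\cup\{\infty\}$, not $\{i\}$: a tiny crosscut cuts off all the slots beyond it, which reach $\infty$ and accumulate on the entire ray $T_0$. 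So from a finite limit of crosscuts you cannot conclude that $K$ is a singleton; since your hypothesis (two components) constrains the impression and not the principal set, it must be woven into the separation argument itself, and that is exactly the missing content of the proof.

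The same example breaks your final counting step: the claim that $B_n=\widehat{\partial}U\cap\overline{V_n}$ has ``only two ends'' $x_n,y_n$, so that $B_n\smallsetminus\{\infty\}$ has at most two pieces, is false at every finite level---in the domain above, a single small crosscut yields $\overline{V_k}\cap\widehat{\partial}U\supseteq T_0\cup\{\infty\}\cup\bigcup_{n>m_k}T_n$, whose complement of $\infty$ has infinitely many components (one per tooth). ``Two ends'' is not a notion that bounds the number of components of $B_n\smallsetminus\{\infty\}$, and the bound by two is a statement about the intersection $\bigcap_n B_n$ only. A correct route to the count (compare the technique in the proof of Proposition~C(a) of this paper, which uses Theorem \ref{thm-accesses-and-julia} and Proposition \ref{lemma-disconnected-cluster-sets} itself) goes through accessibility of $\infty$: once $\varphi^*(\xi)=\infty$ is established, one flanks $\xi$ by points $\eta_1,\eta_2$ whose radial images land at $\infty$, so that $\varphi(R(\eta_1))\cup\varphi(R(\eta_2))\cup\{\infty\}$ is a Jordan curve trapping the two ``sides'' of the cluster set in the two complementary Jordan domains; no per-level count of boundary pieces is available. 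As it stands, both the reduction to $\varphi^*(\xi)=\infty$ and the ``exactly two'' conclusion are unproved.
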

\section{Ergodic properties of $ f\colon \partial U\to\partial U $. Proof of \ref{teo:A}}\label{sect-ergodic-properties}
Now we prove \ref{teo:A}, which states that, for a hyperbolic or simply parabolic Baker domain $ U $, the boundary map $ f\colon\partial U\to \partial U$ is non-ergodic and non-recurrent with respect to harmonic measure $ \omega_U $.

Note that, even though 
the diagram \[\begin{tikzcd}
	\partial	U \arrow{r}{f} & \partial U \\	
\partial	\mathbb{D} \arrow{r}{g^*} \arrow{u}{\varphi^*} & \partial \mathbb{D}\arrow[swap]{u}{\varphi^*}
\end{tikzcd}
\] commutes $ \lambda $-almost everywhere,  and thus $ f|_{\partial U} $ is a factor of $ g^*|_{\partial \mathbb{D}} $, the map $ \varphi^*\colon\partial\mathbb{D}\to\partial U $ need not be an isomorphism. Thus, the non-ergodicity and non-recurrence of $ f|_{\partial U} $ can not be deduced straightforward from  the non-ergodicity and non-recurrence of $ g^*|_{\partial \mathbb{D}} $ (Thm. \ref{thm-ergodic-g}), and our proof relies on properties of the Riemann map (more precisely, Thm. \ref{thm-pommerenke-biaccess}), and of hyperbolic and simply parabolic inner functions (Thm. \ref{thm-ergodic-g}, \ref{thm-DoreingManeInner}).  For more details on the ergodic properties of measure-theoretical isomorphisms and factor maps see e.g. \cite[Sect. 4.1.g]{KatokHasselblat}.

\begin{obs}
	In the particular case of hyperbolic and simply parabolic Baker domains of  entire maps, the map $ \varphi^*\colon\partial\mathbb{D}\to\partial U $ is one-to-one (Thm. \ref{thm-bargmann-unique-accessibility}), and thus the non-ergodicity and non-recurrence follow straightforward from the analogous properties of the associated inner function.
\end{obs}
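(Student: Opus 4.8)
The plan is to promote the $\lambda$-almost-everywhere commuting diagram into a genuine measure-theoretic isomorphism. For entire maps the radial extension $\varphi^*$ is injective off a $\lambda$-null set, so the two boundary systems are conjugate as measure spaces, and non-ergodicity and non-recurrence, being isomorphism invariants, transfer directly from $g^*$.

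First I would record that $\varphi^*$ is defined and finite $\lambda$-almost everywhere. By Lemma \ref{lemma-existence-radial-limits} the radial limit $\varphi^*(\xi)$ exists and lies in $\Omega(f)=\mathbb{C}$ for every $\xi\in\partial\mathbb{D}$ outside a set of logarithmic capacity zero; such sets have Hausdorff dimension zero (Theorem \ref{thm-capacity-Riemann-map} and the properties recalled thereafter) and are in particular $\lambda$-null. Hence $\lambda$-almost every $\xi$ lies in the set $\Theta_\mathbb{C}$ of points with finite radial limit.

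The core step is to extract injectivity and measurability. By Theorem \ref{thm-bargmann-unique-accessibility} the map $\Theta_\mathbb{C}\to AP(U)$, $\xi\mapsto\varphi^*(\xi)$, is a bijection onto the finite accessible boundary points, so $\varphi^*$ is injective on $\Theta_\mathbb{C}$; let $X_0\subset\partial\mathbb{D}$ be the full-measure Borel set on which $\varphi^*$ exists, is finite, and is injective. Being a pointwise radial limit of the holomorphic map $\varphi$, the map $\varphi^*$ is Borel measurable; restricted to $X_0$ it is an injective Borel map between Polish spaces, hence by the Lusin--Souslin theorem a Borel isomorphism onto its image with Borel inverse. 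Since harmonic measure is by definition the push-forward $\omega_U=(\varphi^*)_*\lambda$, the map $\varphi^*$ transports $\lambda$ to $\omega_U$, so $\omega_U(\varphi^*(X_0))=\lambda(X_0)=1$ and $\varphi^*\colon X_0\to\varphi^*(X_0)$ is a measure-preserving Borel isomorphism between full-measure sets.

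Finally, because the diagram $\varphi^*\circ g^*=f\circ\varphi^*$ commutes $\lambda$-almost everywhere, this isomorphism conjugates $g^*$ to $f|_{\partial U}$; and since an isomorphism of measure spaces conjugating two non-singular transformations preserves both ergodicity and recurrence, these properties transfer from $g^*$, which is non-ergodic and non-recurrent by Theorem \ref{thm-ergodic-g}, to $f|_{\partial U}$. The only input requiring the entire-function hypothesis is the injectivity of $\varphi^*$ supplied by Theorem \ref{thm-bargmann-unique-accessibility}; it fails in the general meromorphic setting handled by the full proof of \ref{teo:A}, where $\varphi^*$ may be essentially two-to-one, and the measurability of the inverse is then the only genuinely technical ingredient, resolved by descriptive set theory.
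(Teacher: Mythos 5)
Your proposal is correct and takes essentially the same route the paper intends for this remark: the injectivity of $\varphi^*$ on $\Theta_\mathbb{C}$ supplied by Theorem \ref{thm-bargmann-unique-accessibility}, combined with the fact that $\omega_U$ is by definition the push-forward of $\lambda$ under $\varphi^*$, turns the $\lambda$-a.e.\ commuting diagram into a measure-preserving conjugacy between $g^*|_{\partial\mathbb{D}}$ and $f|_{\partial U}$, so non-ergodicity and non-recurrence (Theorem \ref{thm-ergodic-g}) transfer directly. Your Lusin--Souslin argument simply makes explicit the measurability of the inverse, which the paper leaves implicit in the word ``straightforward''.
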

\begin{proof}[Proof of \ref{teo:A}]
	Let us start by proving that $ f|_{\partial U} $ is non-recurrent with respect to $ \omega_U $. Note that if \[\varphi^*\colon\partial\mathbb{D}\to\partial U\] is a measure-theoretical isomorphism (i.e. a bijection up to sets of zero measure), non-recurrence follows from the same property of the associated inner function. Therefore, we shall assume that there exist $ \xi_1, \xi_2\in\partial\mathbb{D} $ and $ x\in\partial U\subset\Omega(f) $ such that $ \varphi^*(\xi_1) =\varphi^*(\xi_2)=x $. Note that neither $ \xi_1 $ nor $ \xi_2 $ is the Denjoy-Wolff point of $ g $ (otherwise their radial limit would be $ \infty $).
	
	 Then, the image under $ \varphi $ of the radial segments at $ \xi_1 $ and $ \xi_2 $, together with $ x $, i.e.
	\[\varphi(R(\xi_1))\cup\varphi(R(\xi_2))\cup \left\lbrace x\right\rbrace \] is a Jordan curve in $ \widehat{\mathbb{C}} $, and divides $ \partial U $ in two sets $ X_1, X_2 $ of positive measure. Moreover, $ \xi_1 $ and $ \xi_2 $ delimit two (non-degenerate) open circular arcs, say  $ I_1 $ and $ I_2 $, and  $ (\varphi^*)^{-1}(X_1) \subset I_1$ and $ (\varphi^*)^{-1}(X_2) \subset I_2$. We assume  the Denjoy-Wolff point of the inner function lies in  $ I_1 $.  Since $ \lambda $-almost $ \xi\in\partial\mathbb{D} $ converges to the Denjoy-Wolff point under the iteration of $ g^* $ (Thm. \ref{thm-ergodic-g}), 
	it follows that  $ \omega_U $-almost every point in  $ X_2 $ does not come back to $ X_2 $ infinitely often (since its orbit is eventually contained in $ X_1 $), and proves non-recurrence. See Figure \ref{fig-recurrencia}.
	
	 	\begin{figure}[htb!]\centering
		\includegraphics[width=15cm]{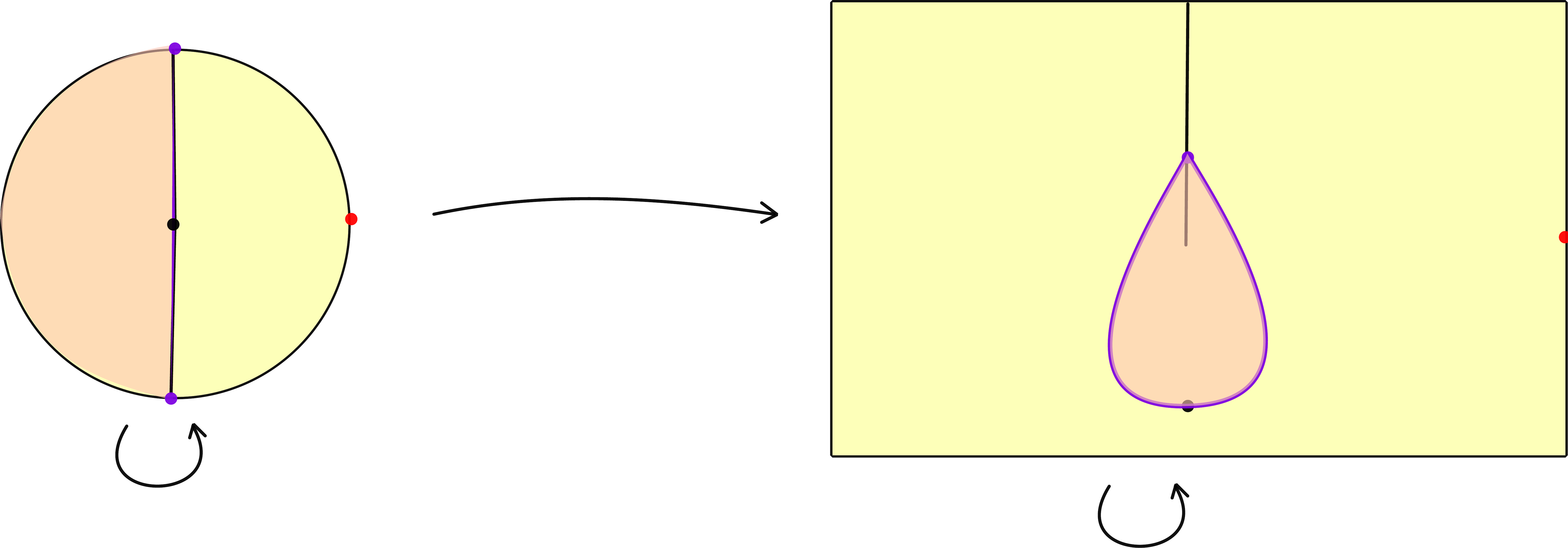}
		\setlength{\unitlength}{15cm}
		\put(-0.765, 0.2){\small$1$}
		\put(-0.91, 0.02){$g$}
		\put(-0.28, -0.02){$f$}
		\put(-0.6, 0.24){$\varphi$}
		\put(-0.97, 0.3){$\mathbb{D}$}
		\put(-0.03, 0.32){$ U$}
		\put(-0.235, 0.25){\small$x\in\partial U$}
		\put(-0.065, 0.195){\small$\varphi^*(1)$}
		\put(-0.265, 0.07){\small$\varphi(0)$}
		\put(-0.91, 0.2){\small$0$}
			\put(-0.91, 0.325){\small$\xi_1$}
				\put(-0.91, 0.07){\small$\xi_2$}
						\put(-0.785, 0.15){\small$I_1$}
			\put(-1.01, 0.15){\small$I_2$}
						\put(-0.41, 0.15){\small$X_1$}
			\put(-0.25, 0.15){\small$X_2$}	
		\caption{\footnotesize Set-up of the proof of non-recurrence: schematic representation the Riemann map $ \varphi\colon\mathbb{D}\to U $, together with the choice of $ \xi_1, \xi_2\in\partial \mathbb{D}$.}\label{fig-recurrencia}
	\end{figure}

	Let us turn now to prove non-ergodicity, i.e. the existence of an invariant set of neither full nor zero measure. To do so, let \[ X\coloneqq \left\lbrace \xi\in\partial\mathbb{D}\colon \varphi^*(\xi)=a, \ \varphi^*(\xi_j)=a \textrm{ for three distinct points } \xi_j\in\partial\mathbb{D}  \right\rbrace. \] By Theorem \ref{thm-pommerenke-biaccess}, $ \lambda(X)=0 $, since it is the countable union of sets of measure zero. Moreover, since inner functions are non-singular,
	\[Z\coloneqq \partial\mathbb{D}\smallsetminus \bigcup\limits_{n\in\mathbb{Z}} (g^*)^n(X)\] is invariant under $ g^* $ and has zero $ \lambda $-measure.
	
	Since $ g^*|_{\partial\mathbb{D}} $ is non-ergodic, there exists $ A\subset Z $ such that $ \lambda (A)\in (0,1) $ and $ (g^*)^{-1}(A)=A $. Since $ f\circ\varphi^*=\varphi^*\circ g^* $ holds $ \lambda $-almost everywhere, the set $ \varphi^*(A) $ is $ f $-invariant up to a set of zero measure. If $ \omega_U(\varphi^*(A) )\in (0,1) $ we are done. 
	
	Otherwise, $ \omega_U(\varphi^*(A) )=1 $, and $ \varphi^*|_A $ is one-to-one (up to a set of zero measure). We claim that there exists $ A'\subset A $ such that $ (g^*)^{-1}(A')=A' $ and $ 0<\lambda(A')<\lambda (A) $. This would imply that $ \omega_U(\varphi^*(A') )\in (0,1) $, since $ \varphi^*|_A $ is one-to-one (up to a set of zero measure), and thus will end the proof of \ref{teo:A}.
	
	To prove the existence of $ A' $ we rely on Theorem \ref{thm-DoreingManeInner}, which claims that there exists $ h $ inner function and $ T $ Möbius transformation such that
	\[	\begin{tikzcd}
		\partial	\mathbb{D} \arrow{r}{g^*} \arrow[swap]{d}{h^*}& \partial \mathbb{D}\arrow{d}{h^*}  \\	
		\partial\mathbb{D} \arrow{r}{T} & \partial \mathbb{D}
	\end{tikzcd}\]
$ \lambda $-almost everywhere, and $ h^*|_{\partial\mathbb{D}} $ is measure-preserving. With this diagram in mind it is easy to obtain the set $ A' $ (since there exist $ T $-invariant sets of arbitrarily small measure contained in $ h^*(A) $). The proof is now complete.
\end{proof}

\section{Hyperbolic and simply parabolic Baker domains of entire functions. Proof of \ref{teo:B} and \ref{prop:C}}\label{sect-entera}

 In this section we prove \ref{teo:B}, which asserts that the non-Carathéodory set is non-empty for non-univalent Baker domains of entire functions, of hyperbolic or simply parabolic type, under the assumption that there exists a crosscut neighbourhood $ N_\xi $ of $ \xi\in\mathcal{J}(g) $ with  $ {\varphi(N_\xi)} \cap P(f) =\emptyset$. 
 
 We split the proof into three steps: first, we prove that $ \mathcal{J}(g) $ has positive Hausdorff dimension;  second, we prove topological properties of the boundaries of such Baker domains (\ref{prop:C}); and finally we prove \ref{teo:B}, which will be a consequence of the two previous results. Before starting, we include a deep study of the Carathéodory set.
 
 \subsection{The Carathéodory set}\label{subs-caratheodory}
 Recall the following definitions. 
 \begin{defi}{\bf (Crosscuts and crosscut neighbourhoods)}	A \textit{crosscut} $ C $ is an open Jordan arc $ C\subset\mathbb{D} $ such that $ \overline{C}=C\cup \left\lbrace \xi_1,\xi_2\right\rbrace  $, with $ \xi_1, \xi_2\in\partial \mathbb{D}$ and $ \xi_1\neq\xi_2 $. 
A \textit{crosscut neighbourhood} of $ \xi \in\partial\mathbb{D}$ is an open set $ N\subset\mathbb{D} $ such that $ \xi\in\partial N$, and $C\coloneqq \partial N \cap\mathbb{D} $ is a crosscut.
 \end{defi}

The previous concepts are the cornerstones of the construction of Carathéodory's compatification of a simply connected domain, and the theory of prime ends. Without diving into the subject, the Carathéodory's compactification of $ U $ endows $ \overline{U} $ with a topology such that $ \varphi\colon\mathbb{D}\to U $ extends to $ \overline{\mathbb{D}} $ as a homeomorphism, and a sequence $ \left\lbrace z_n\right\rbrace _n\subset U $, $ z_n\to \partial U $, is convergent if there exists $ \xi\in\partial\mathbb{D} $ such that for every crosscut neighbourhood $ N\subset\mathbb{D} $ of $ \xi $, there exists $ n_0 $ such that for all $ n\geq n_0 $, $ z_n\in\varphi(N) $.  For a deeper exposition on the topic, see e.g. \cite[Chap. 2.5]{Pom92}, \cite[Chap. 17]{Milnor}.

Let $ f\in\mathbb{K} $, let $ U $ be an invariant Baker domain, and let $ \varphi\colon\mathbb{D}\to U $ be a Riemann map. In the introduction,  we defined the  {\em Carathéodory set} of the Baker domain $ U $ as the set of points $ x\in\partial U $  such that, for any crosscut neighbourhood $ N\subset \mathbb{D} $ at the Denjoy-Wolff point $ p\in\partial \mathbb{D} $,  there exists $ k_0 $ such that, for all $ k\geq k_0 $, \[f^{k}(x)\in\overline{\varphi(N)}.\] In other words, the Carathéodory set is the set of points in $ \partial U $ whose orbit converges to the Denjoy-Wolff point in the Carathéodory's topology of $ \partial U $.

 Let us describe the Carathéodory set in some illustrative examples. Let us start with doubly parabolic Baker domains of entire functions. Following \cite[Thm. A]{JF23}, we have that 
 \[\partial U=\bigsqcup_{\xi\in\partial\mathbb{D}} Cl(\varphi, \xi)\cap \mathbb{C}.\] In particular, this implies that the images under $ \varphi $ of disjoint crosscut neighbourhoods in $ \mathbb{D} $ have disjoint closures in $ \mathbb{C} $.  Note that the topology of the boundary of a general simply connected domain may be more complicated and the previous property need not be satisfied. In particular, we have that the Carathéodory set of such Baler domains consists precisely of points on $ \partial U $ which belong to $ Cl(\varphi, \xi) $, for $ \xi\in\partial\mathbb{D} $ with $ (g^* )^n(\xi)\to p$. The results in \cite[Thm. F]{DM91}, \cite[Thm. C]{BFJK-Escaping} imply that in many cases the Carathéodory set has harmonic measure zero; we do next a deeper analysis.
 
 Let us start with doubly parabolic Baker domains for which the Denjoy-Wolff point $ p $ of the associated inner function $ g $ is not a singularity. In this case, $ p $ is a parabolic fixed point with two petals, and it is (weakly) repelling when restricted to $ \partial \mathbb{D} $. The only points on $ \partial\mathbb{D} $ that converge to $ p $ under iteration are its (radial) iterated preimages. Thus, the Carathéodory set is  \[\bigcup_{n\geq 0}\ \bigcup_{(g^*)^n(\xi)=p} Cl(\varphi, \xi)\cap \mathbb{C}.\] In the following example, we see that this is compatible with having a curve of escaping points in the cluster set of every $ \xi\in\partial\mathbb{D} $ (note that such points converge to $ \infty $, but the convergence does not take place through the dynamical access).
 \begin{ex}{\bf (Doubly parabolic Baker domain, finite degree, {\normalfont 	\cite[Sect. 5, 6]{BakerDominguez}, \cite[Ex. 3]{FH}, \cite{FJ23}})}\label{ex-dp-finite-deg}
 The function
 	\[f(z)=z+e^{-z}\]has a doubly parabolic Baker domain $ U_k $ of degree 2 in each strip $ S_k\coloneqq\left\lbrace (2k-1)\pi\leq \textrm{Im}z\leq (2k+1)\pi \right\rbrace  $. Due to the $ 2\pi i $-periodicity of the function, it suffices to study $ U_0 $. It is easy to see that $ \mathbb{R}\subset U_0 $ and $$L^\pm\coloneqq \left\lbrace z\in\mathbb{C}\colon \textrm{Im }z=\pm \pi i\right\rbrace  \subset\partial U_0. $$
 	
 	Fix the Riemann map $ \varphi\colon\mathbb{D}\to U_0 $ (chosen as in \cite{BakerDominguez, FJ23}). Then, the associated inner function can be computed explicitly as \[g\colon\mathbb{D}\to\mathbb{D}, \hspace{0.5cm}g(z)=\dfrac{3z^2+1}{3+z^2}.\]Note that the Denjoy-Wolff point is 1, and one can prove that $ Cl(\varphi, 1)=L^\pm\cup \left\lbrace \infty\right\rbrace  $.
 	
 	Thus, the Carathéodory set of $ U_0 $ is 
 	\[\bigcup_{n\geq 0}\ \bigcup_{g^n(\xi)=1} Cl(\varphi, \xi)\cap \mathbb{C}=\bigcup_{n\geq 0}(f^{-n}(L^\pm)).\] Note that all points in the Carathéodory set are non-accessible from $ U $.

 	Moreover, one can show that, for every $ \xi\in\partial \mathbb{D} $, $ g^n(\xi) \neq 1$ for all $ n\geq 0 $, $ Cl(\varphi, \xi) \cap\mathbb{C}$ consists of a curve of escaping points, landing at infinity from one end, and at a finite endpoint in the plane from the other end, or accumulating along itself, giving rise to an indecomposable countinua.
 	In any case, this shows the existence of plenty of escaping points which are not in the Carathéodory set. See Figure \ref{fig-BakerAnna}.
 	
 	\begin{figure}[htb!]\centering
 	\includegraphics[width=16cm]{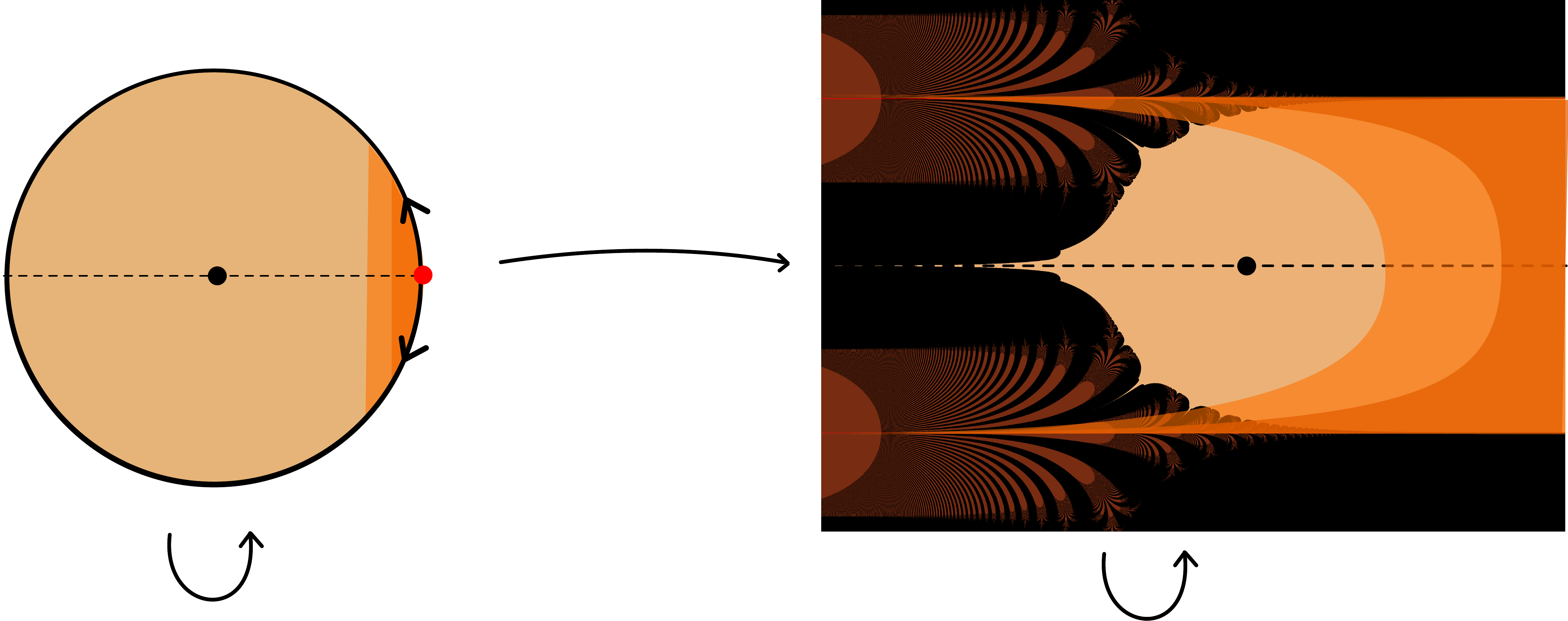}
 	\setlength{\unitlength}{16cm}
 	\put(-0.725, 0.21){\small$1$}
 	\put(-0.93, -0.02){$g(z)=\frac{3z^2+1}{3+z^2}$}
 	 	\put(-0.33, -0.02){$f(z)=z+e^{-z}$}
 	\put(-0.6, 0.245){$\varphi$}
 	\put(-0.958, 0.33){$\mathbb{D}$}
 	\put(-0.03, 0.31){$ U_0$}
 	\put(-0.21, 0.203){\small$0$}
 		\put(-0.865, 0.197){\small$0$}
 	\caption{\footnotesize Dynamical plane of $f(z)=z+e^{-z}$, with the doubly parabolic Baker domain $ U_0 $ (orange). The Riemann map $ \varphi\colon\mathbb{D}\to U_0 $ is depicted, together with the inner function. Note that 1 is the Denjoy-Wolff point, and it is repelling when restricted to $ \partial\mathbb{D} $. Crosscut neighbourhoods at the Denjoy-Wolff point are indicated, as well as their image in the dynamical plane. By definition, the Carathéodory set consists of those points on $ \partial U $ whose orbit eventually enters the image of every crosscut neighbourhood of 1. By the dynamics of $ g $, one deduces that the Carathéodory set is $ Cl(\varphi, 1)\cap \mathbb{C} $ and its iterated preimages.}\label{fig-BakerAnna}
 \end{figure}
 \end{ex}
Next we deal  with doubly parabolic Baker domains for which the Denjoy-Wolff point $ p $ of the associated inner function $ g $ is  a singularity. In this case, the Carathéodory set may be larger, as shown in the following example. 
 \begin{ex}{\bf (Doubly parabolic Baker domain, infinite degree, {\normalfont  \cite{Evdoridou_Fatousweb}, \cite[Ex. 1.5]{BFJK-Escaping}})}\label{ex-dp-infinite-deg} The function
	 	\[f(z)=z+1+e^{-z},\] known as Fatou's function, has a completely invariant Baker domain $ U $, which contains a right half-plane, and $ \mathcal{J}(f)=\partial U $. It follows from \cite[Thm. D]{BFJK-Escaping} that $ \omega_U $-almost every point has a dense orbit, however we claim that the Carathéodory set is non-empty, and in fact contains accessible points. Indeed, it corresponds to the hairs whose endpoint (and thus the whole hair) escapes to $ \infty $. Such endpoints (which have harmonic measure zero) have been studied in \cite{Evdoridou_Fatousweb}.
	 	
	 	Moreover, the associated inner function can be computed explicitly as 
	 	\[g\colon\mathbb{H}\to\mathbb{H}, \hspace{0.5cm} g(z)=z- \frac{\cot z}{2},\]for a suitable Riemann map $ \varphi\colon\mathbb{H}\to U $ \cite[Thm. 1.9]{FatousAssociates}. Escaping endpoints correspond to points in $ \mathbb{R} $ which converge to the Denjoy-Wolff point under iteration of $ g $.
	 	
	 	See Figure \ref{fig-BakerFatou}.
	 	
	 	 	\begin{figure}[htb!]\centering
	 		\includegraphics[width=16cm]{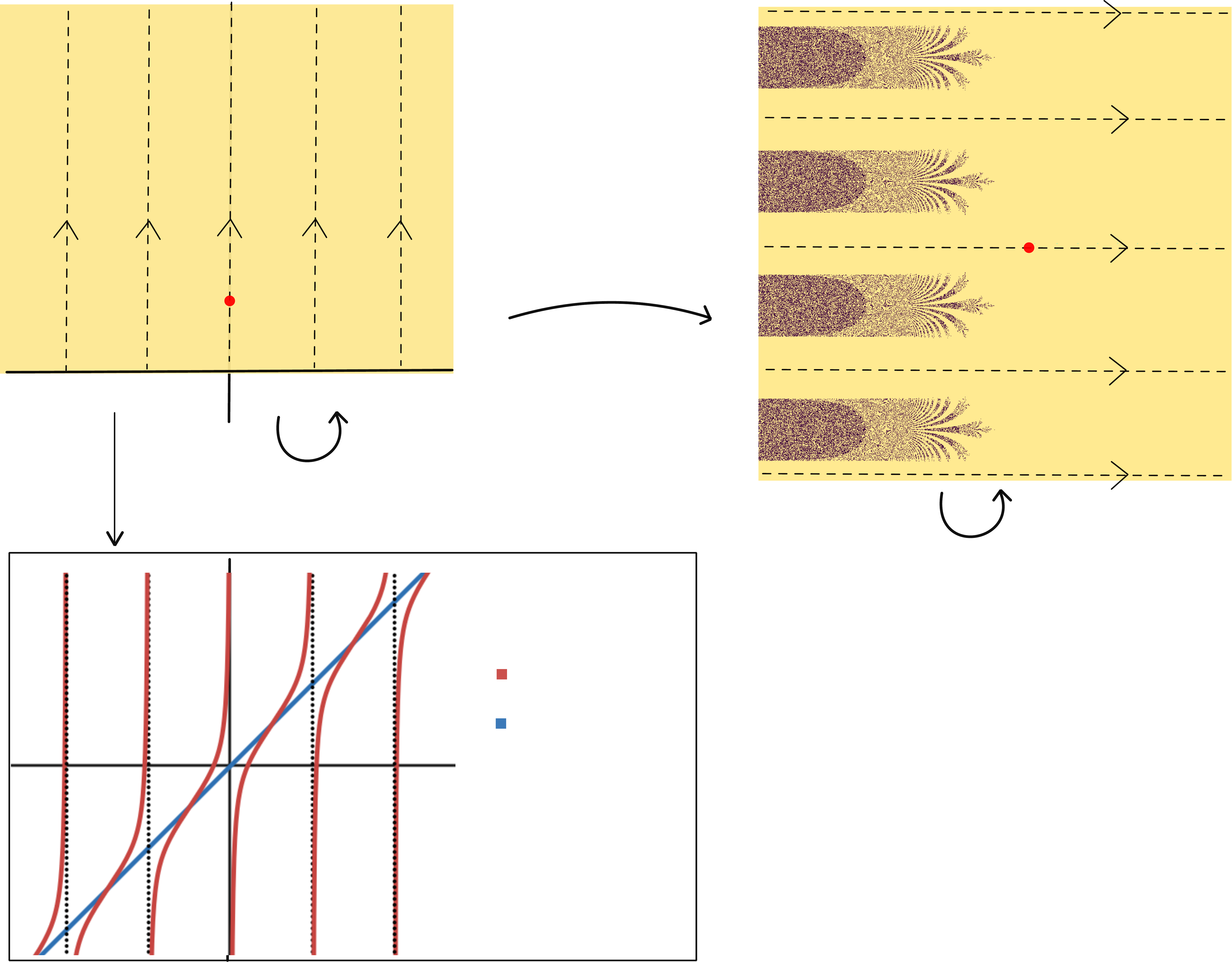}
	 		\setlength{\unitlength}{16cm}
	 		\put(-0.58, 0.23){\footnotesize$g(x)=x-\frac{\cot x}{2}$}
	 		 		\put(-0.58, 0.19){\footnotesize$y=x$}
	 		\put(-0.8, 0.39){\small $g(z)=z- \frac{\cot z}{2}$}
	 		\put(-0.31, 0.32){\small $f(z)=z+1+e^{-z}$}
	 		\put(-0.51, 0.545){$\varphi$}
	 		\put(-0.94, 0.37){\small $g|_{\mathbb{R}}$}
	 		\put(-0.02, 0.74){$ U$}
	 		 		\put(-1, 0.75){$ \mathbb{H}$}
	 		\put(-0.17, 0.59){\small$0$}
	 		\put(-0.8, 0.53){\small$i$}
	 		\caption{\footnotesize Dynamical plane of $f(z)=z+1+e^{-z}$, with the doubly parabolic Baker domain $ U $ (yellow), of infinite degree. The Riemann map $ \varphi\colon\mathbb{H}\to U $ is depicted, together with the inner function, and the graphic of the inner function restricted to the real line, $ g|_{\mathbb{R}} $. Note that $ \infty $ is the Denjoy-Wolff point,  and it is a singularity of $ g $. It is easy to see that there exists points in $ \mathbb{R} $ (which are not poles or prepoles) converging to $ \infty $ under iteration; this points correspond to escaping endpoints in the dynamical plane, and their hairs, and they form the Carathéodory set of $ U $.
	 			}\label{fig-BakerFatou}
	 	\end{figure}
\end{ex}

For hyperbolic and simply parabolic Baker domains, the situation is fundamentally diferent. Indeed, when the Denjoy-Wolff point of $ g $ is not a singularity, it attracts points in the unit circle (from both sides if $ g $ is hyperbolic, or from one side if $ g $ is simply parabolic). Moreover, the Carathéodory set has always full measure (Thm. \ref{thm-ergodic-g}).

If we restrict ourselves to univalent Baker domains, the situation is even more tamer. The inner function (considered in the upper half-plane) is a Möbius transformation $ M\colon \mathbb{H}\to\mathbb{H} $, and can be taken to be $ z\mapsto \lambda z $, $ \lambda>0 $ (hyperbolic), or $ z\mapsto z\pm 1 $ (simply parabolic). Note that, in the first case, there is a single point in $ \mathbb{R} $ which does not converge to $ \infty $ (0, which is fixed), and none in the second case. For univalent Baker domains whose boundary is a Jordan curve, this implies that the Carathéodory set at most omits one point on $ \partial U $. We refer to the following examples.
\begin{ex}{\bf (Univalent Baker domain, simply parabolic type, {\normalfont \cite[p. 609]{Herman},
			\cite[Thm. 4]{BakerWeinreich}, \cite[Sect. 5.3]{BaranskiFagella}})}\label{ex-sp-univalent}
The function	\[f(z)=z+2\pi i \alpha +e^z, \]
	for appropriate $ \alpha\in \left[ 0,1\right] \smallsetminus\mathbb{Q} $, has a univalent Baker domain $ U $, of simply parabolic type, contained in a left half-plane. One can choose $ \alpha $ so that $ \widehat{\partial}U$ is a Jordan curve. In particular, the Carathéodory set is the whole boundary of the Baker domain.
\end{ex}
\begin{ex}{\bf (Univalent Baker domain, hyperbolic type, {\normalfont \cite{Bergweiler}, \cite[Sect. 5.1]{BaranskiFagella}})}\label{ex-hyp-univalent} 
The function	\[f(z)=2-\log2+2z-e^z\]has a univalent Baker domain $ U $, of hyperbolic type, contained in a left half-plane, and $\widehat{\partial} U $ is a Jordan curve. Then, Carathéodory set is the whole boundary of the Baker domain, except one point (the fixed point that corresponds to the repelling fixed point of the inner function under the Riemann map).

	Let us note that other examples of univalent Baker domains of hyperbolic type, such as the ones considered in \cite[Sect. 5.2]{BaranskiFagella} satisfy that the Carathéodory set is the whole boundary, since the cluster set of the repelling fixed point of the inner function is $ \left\lbrace  \infty\right\rbrace $.
\end{ex}

\begin{figure}[h]\centering
		\captionsetup[subfigure]{labelformat=empty}
	\hfill
	\begin{subfigure}[b]{0.443\textwidth}\centering
		\includegraphics[width=\textwidth]{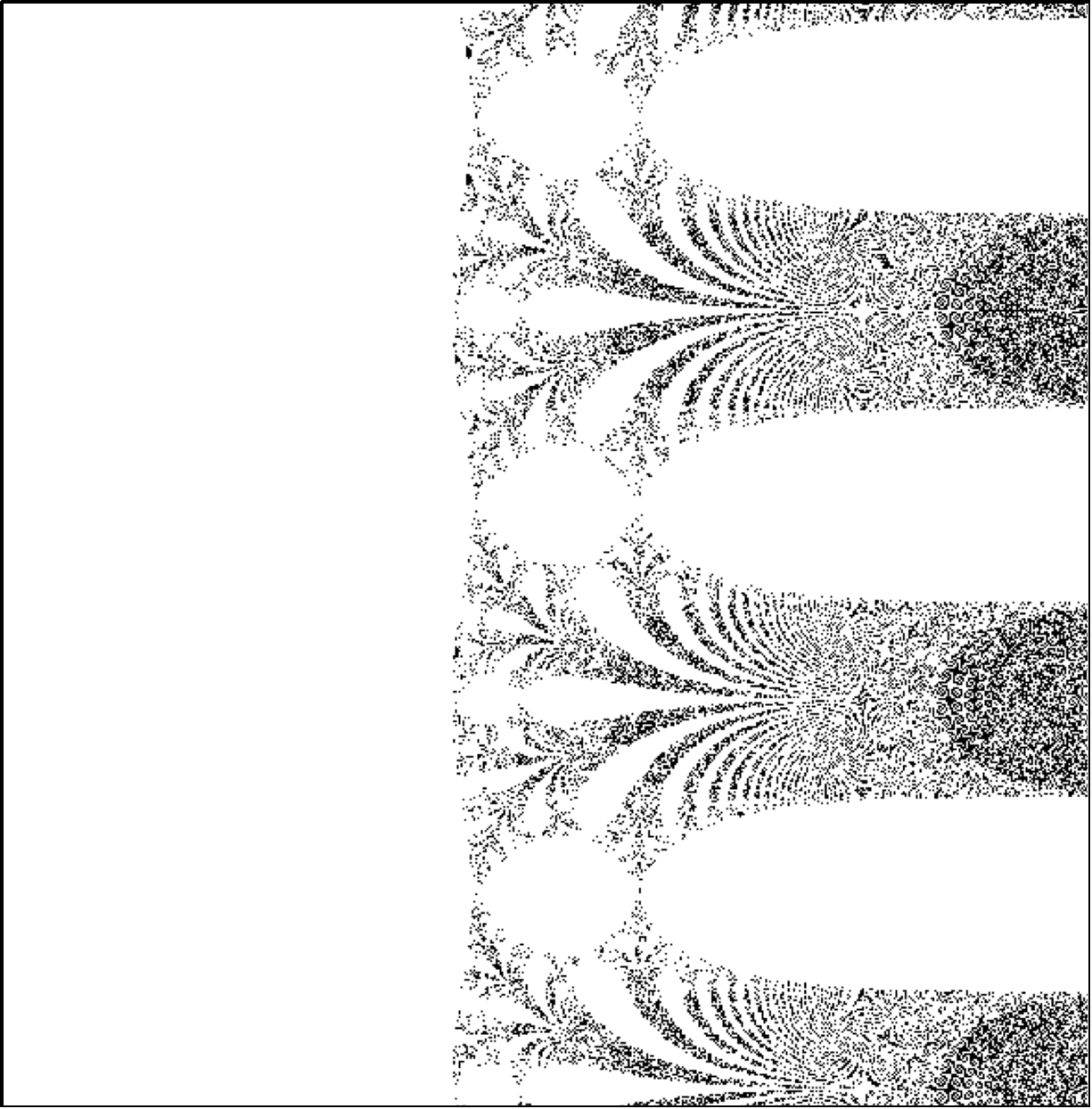}
		\caption{\footnotesize $ f(z)=2-\log2+2z-e^z $}
		
	\end{subfigure}
	\hfill
	\hfill
	\begin{subfigure}[b]{0.45\textwidth}\centering
	\includegraphics[width=\textwidth]{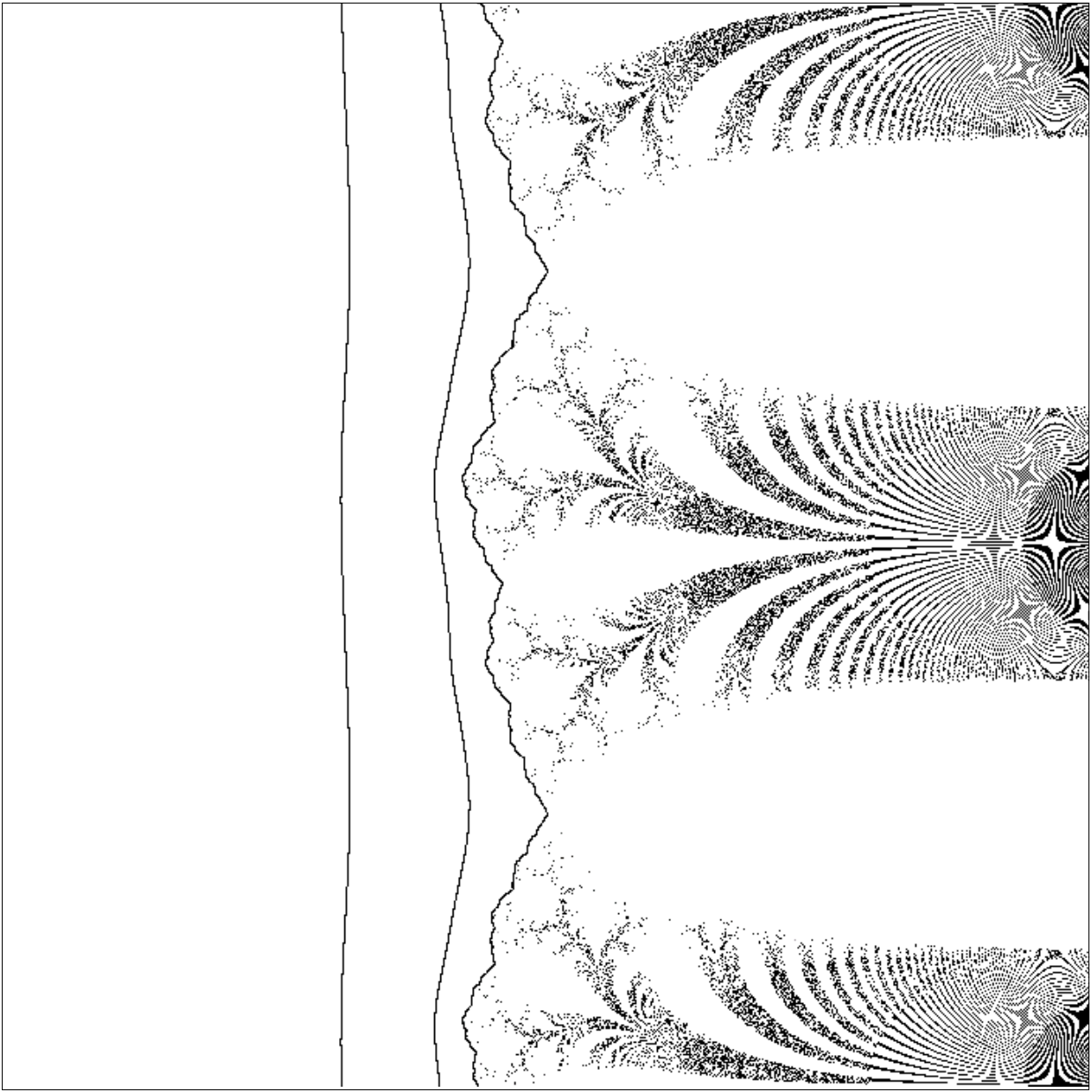}
		\caption{\footnotesize $ f(z)=z+2\pi i \alpha +e^z $}
		
	\end{subfigure}
	\hfill
	\caption{{\footnotesize Examples of univalent Baker domains of hyperbolic and simply parabolic type, respectively (in both cases, the Baker domain is the Fatou component which contains a left half-plane).}}
\end{figure}

Finally, we consider the following example of a non-univalent hyperbolic Baker domain, due to Bargmann 	\cite[Ex. 3.6]{Bargmann}, which exhibits a richer boundary dynamics. Indeed, although the Carathéodory set has full harmonic measure, periodic and bungee points are dense on $ \partial U $.
\begin{ex}{\bf (Hyperbolic Baker domain, infinite degree, {\normalfont 	\cite[Ex. 3.6]{Bargmann}})}\label{ex-hyp-bargmann}
The function
	\[f(z)=2z-3+e^z\]
	has a completely invariant Baker domain $ U $, and $ \mathcal{F}(f)=U $ and $ \mathcal{J}(f)=\partial U $. Indeed, $ \exp\circ F=F\circ\exp $, with \[F(z)=z^2e^{z-3}.\]It is easy to see that this latter function  has a completely invariant super-attracting basin $ V $ with fixed point 0, and $ \mathcal{F}(F)=V $. Thus, $ \mathcal{J}(F)=\partial V $, and by a result of Bara\'nski \cite{Baranski_trees}, $ \mathcal{J}(F) $ consists of disjoint curves (hairs) of escaping points, landing at infinity from one end and to a finite endpoint from the other end, which is the only point in the hair accessible from $ V $.  
	
	This implies that $ \mathcal{J}(f)=\partial U $, and $ \mathcal{J}(f) $ consists also of  hairs of escaping points, contained in the right half-plane. The Carathéodory set consists of those hairs whose endpoint $ x $ (and thus the whole hair) converges to infinity with $ \left| \textrm{Im}f^n(x)\right| \to\infty $. Note that the set of such endpoints have full harmonic measure.
	
	 	\begin{figure}[htb!]\centering
		\includegraphics[width=16cm]{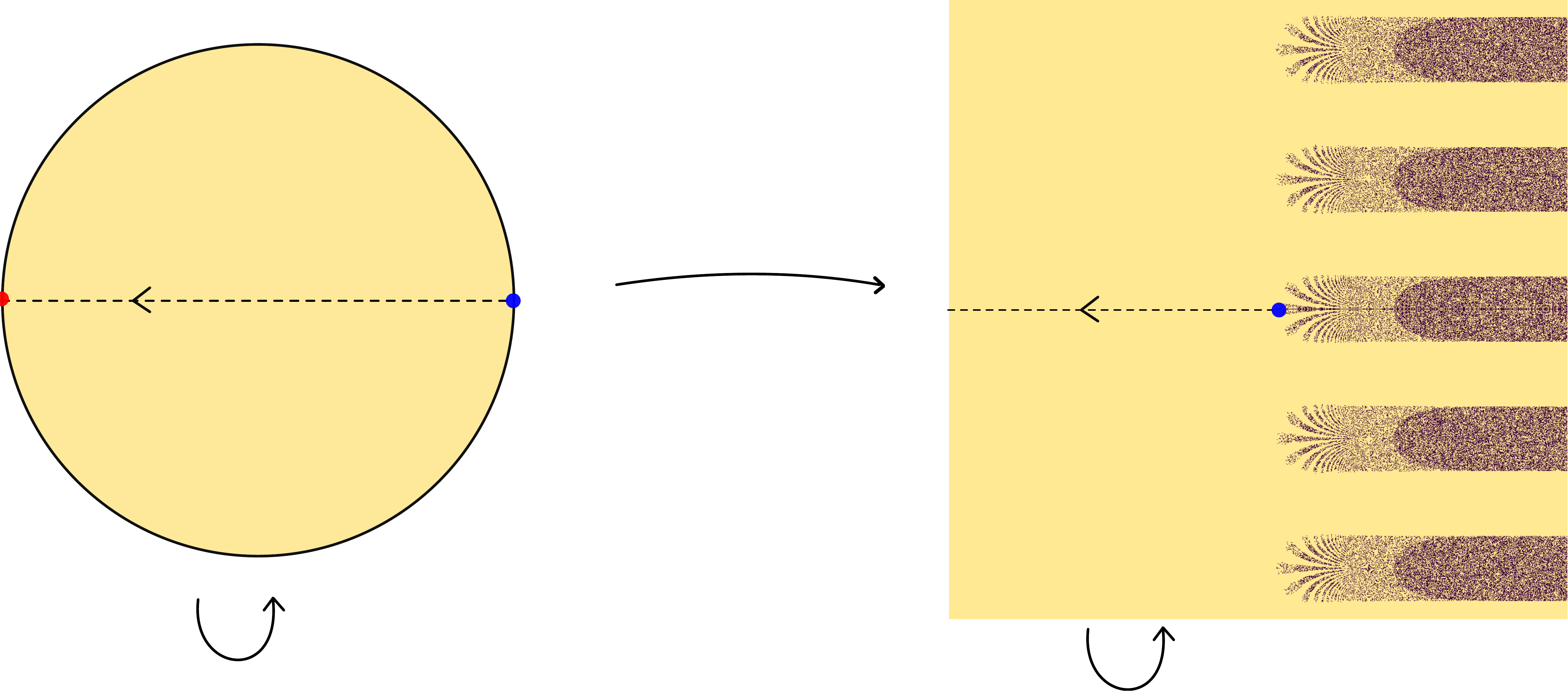}
		\setlength{\unitlength}{16cm}
		\put(-0.665, 0.24){\small$1$}
		\put(-1.03, 0.24){\small$-1$}
		\put(-0.86, -0.0){$g$}
		\put(-0.36, -0.02){\small$f(z)=2z-3+e^{z}$}
		\put(-0.53, 0.28){$\varphi$}
		\put(-0.958, 0.33){$\mathbb{D}$}
		\put(-0.39, 0.42){$ U$}
		\caption{\footnotesize Dynamical plane of $f(z)=2z-3+e^{z}$, with the hyperbolic Baker domain $ U $ (yellow) of infinite degree. The Riemann map $ \varphi\colon\mathbb{D}\to U $ is depicted, together with the inner function. Note that $ -1 $ is the Denjoy-Wolff point. The boundary $ \partial U $ is a Cantor bouquet, and the Carathéodory set consists of those endpoints whose orbit converges to $ \infty $ with $ \left| \textrm{Im}f^n(x)\right| \to\infty $ and the corresponding hairs. Such endpoints have full harmonic measure.}\label{fig-BakerBargmann}
	\end{figure}
	
	Thus, observe that the Carathéodory set is dense, but non-Carathéodory set is also dense. Indeed, repelling periodic points are  dense on $ \partial U $ (and accessible from $ U $), as well as points whose orbit is dense on $ \partial U $.
	Note also the existence of escaping points which are not in the Carathéodory set. We shall remark the poor regularity of the associated inner function $ g $: its Denjoy-Wolff point  is a singularity (in fact, the only singularity of $ g $), and
	$ \mathcal{J}(g)=\partial\mathbb{D} $.
\end{ex}
\subsection{The Hausdorff dimension of $ \mathcal{J}(g) $}

We prove the following.
\begin{lemma}{\bf (Hausdorff dimension of $ \mathcal{J}(g) $)}\label{lemma-hausdorff-julia}
Let $ f\in\mathbb{K}$, and let $ U $ be a non-univalent simply connected Baker domain, of hyperbolic or simply parabolic  type. Let $ \varphi\colon\mathbb{D}\to U $ be a Riemann map, and let $ g=\varphi^{-1}\circ f\circ \varphi $ be the inner function associated  with $ f|_U $ by $ \varphi $.  
Assume there exists a crosscut neighbourhood $ N_\xi $ of $ \xi\in\mathcal{J}(g) $ such that  $ {\varphi(N_\xi)} \cap P(f) =\emptyset$.  Then, the Hausdorff dimension of $ \overline{N_\xi}\cap\mathcal{J}(g)  $ is positive.
\end{lemma}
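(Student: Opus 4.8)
The plan is to transfer the positivity of Hausdorff dimension from a suitable self-map of the unit circle, obtained from the inner function, to the Julia set $\mathcal{J}(g)$ via the Riemann map, but working entirely on the circle side so as to avoid distortion problems coming from $\varphi$. The key observation is that, under the hypothesis that $\varphi(N_\xi)\cap P(f)=\emptyset$, Proposition~\ref{prop-radial-limits} guarantees that all inverse branches $H_n$ of $h^{-n}$ (for the half-plane model $h$) are well-defined and have controlled distortion over a genuine disk around the relevant boundary point. This is precisely the analytic control needed to build a self-similar-type Cantor structure with a definite dimension.

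**First I would** set up the conformal picture. Since $\xi\in\mathcal{J}(g)$, Lemma~\ref{lemma-preimages-inner} tells me iterated preimages of points accumulate on $\xi$, and in the infinite-degree case $\xi$ is approximated by singularities (Lemma~\ref{lemma-properties-inner-function}(c)); in the finite-degree case $g$ is a Blaschke product and $\mathcal{J}(g)$ is a standard hyperbolic Julia set. I would pick a point $\zeta_0$ near $\xi$ whose forward orbit under $g^*$ reenters $\overline{N_\xi}$, producing a ``return map'' that behaves like an expanding map on a subset of $\overline{N_\xi}\cap\mathcal{J}(g)$. The disjointness $\varphi(N_\xi)\cap P(f)=\emptyset$ ensures, via Proposition~\ref{prop-radial-limits}, that on a disk $D(x,\rho)$ about the corresponding real point $x=M(\xi)$ all branches $H_n$ of $h^{-n}$ are univalent with Koebe-type bounded distortion, so their images of radial segments lie in fixed Stolz angles. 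Using several such inverse branches whose images have pairwise disjoint closures inside $\overline{N_\xi}$, I would construct an \emph{iterated function system} $\{H_{n_1},\dots,H_{n_k}\}$ ($k\geq 2$) mapping a closed arc-neighbourhood strictly inside itself.

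**The main step** is then to estimate the dimension of the attractor of this IFS. Because the branches are univalent with distortion bounded independently of the level (this is exactly what Proposition~\ref{prop-radial-limits} buys us — the Stolz-angle control forces comparability of the derivative $|H_n'|$ across the disk, so each $H_n$ contracts by a factor comparable to a constant $c_n<1$ on the relevant arc), a standard mass-distribution / Moran-construction argument yields a lower bound for the Hausdorff dimension of the limit set in terms of $-\log k$ and the contraction ratios. Concretely, distributing a measure $\mu$ uniformly across the $k$ branches at each level and applying the mass distribution principle, one gets $\dim_H(\Lambda)\geq s$ for the $s$ solving a Moran-type equation $\sum_j c_j^{\,s}=1$, which is strictly positive since $k\geq 2$ and the $c_j$ are bounded away from $0$. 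The attractor $\Lambda$ is contained in $\overline{N_\xi}\cap\mathcal{J}(g)$ because it is a nested intersection of preimages of an arc, each preimage lying in $\mathcal{J}(g)$ (backward invariance of the Julia set) and inside $\overline{N_\xi}$ by construction, so $\dim_H(\overline{N_\xi}\cap\mathcal{J}(g))\geq \dim_H(\Lambda)>0$.

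**The hard part will be** producing \emph{at least two} inverse branches whose images have disjoint closures and land back inside the chosen arc-neighbourhood of $\overline{N_\xi}\cap\mathcal{J}(g)$ — i.e., genuinely realizing a branching (non-degenerate) Cantor structure rather than a single contracting branch (which would be consistent with dimension zero, as happens for univalent Baker domains where $g$ is Möbius and $\mathcal{J}(g)$ is a single point). Non-univalence of $U$ is what rules this out: since $U$ is non-univalent, $g$ has degree at least $2$ (or infinite degree), so $g^*$ is genuinely many-to-one near $\xi\in\mathcal{J}(g)$, and Lemma~\ref{lemma-sing} together with Lemma~\ref{lemma-preimages-inner} provides multiple distinct preimages of a test point accumulating in $\overline{N_\xi}$. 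I would use these to select two branches with separated images, possibly after passing to an iterate $h^m$ to gain room. A secondary technical point is verifying that the distortion bounds of Proposition~\ref{prop-radial-limits} survive composition of branches uniformly in the depth of composition; this follows from the bounded-distortion-under-composition principle for univalent maps of disks with uniformly bounded eccentricity of Stolz-angle images, but it must be stated carefully to make the Moran estimate legitimate.
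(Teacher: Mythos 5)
Your proposal follows essentially the same route as the paper: both use the hypothesis on the postsingular set to get all inverse branches of $g^n$ univalently defined on a disk meeting $\mathcal{J}(g)$, select two branches with disjoint images mapping that disk compactly into itself, and apply a Moran-type lower bound (the paper cites Falconer's Proposition 9.7, following Stallard) to the attractor of the resulting two-branch system. The one step you should repair is the claim that the attractor lies in $\mathcal{J}(g)$ by ``backward invariance of the Julia set'' applied to preimages of an arc --- preimages of an arc need not lie in $\mathcal{J}(g)$ at all; the paper instead notes that every point of the attractor returns to the disk infinitely often under $g^*$, hence cannot converge to the Denjoy--Wolff point, and therefore belongs to $\mathcal{J}(g)$.
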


Note that, in the case where $ g $ has finite degree (and thus it is a rational function), the fact that the  Hausdorff dimension of $ \mathcal{J}(g) $ is positive follows from a result of Garber \cite[Thm. 1]{Garber}.

Our proof follows the same ideas as in \cite[Thm. A]{Stallard}, where Stallard prove that the Julia set of a meromorphic function has always positive Hausdorff dimension. In its turn, her result relies on the following lemma \cite[Prop. 9.7]{Falconer} (see also \cite[Lemma 2.1]{Stallard}).
\begin{lemma}\label{lemma-falconer}
	Let $ \phi_1 $, $ \phi_2 $ be contractions on a closed set $ D\subset\mathbb{C} $ such that for each $ x,y\in D $, we have \[ b_i\left| x-y\right| \leq \left| \phi_i(x)-\phi_i(y)\right| \] with $ 0<b_i<1 $, $ i=1,2 $. Let $ F\subset D $ be such that \[ F=\phi_1(F)\sqcup \phi_2(F).\] Then, the Hausdorff dimension of $ F $ is greater than $ s $, where $ b^s_1+b^s_2=1 $.
\end{lemma}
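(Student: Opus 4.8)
The plan is to obtain the lower bound on $\dim_H F$ through the \emph{mass distribution principle}: it suffices to produce a Borel probability measure $\mu$ supported on $F$ and constants $C,\delta>0$ with $\mu(B(x,r))\le C r^{s}$ for every ball of radius $r\le\delta$, since this forces $\mathcal H^{s}(F)\ge \mu(F)/C>0$ and hence $\dim_H F\ge s$. Throughout I regard $F$ as compact (as it is in our application, where $F=\overline{N_\xi}\cap\mathcal J(g)$); this is exactly where the hypothesis that the union $\phi_1(F)\sqcup\phi_2(F)$ is \emph{disjoint} is used, as it makes $\phi_1(F)$ and $\phi_2(F)$ disjoint compacta, so that $\rho\coloneqq \dist(\phi_1(F),\phi_2(F))>0$. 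The exponent $s$ is the unique positive solution of $b_1^{s}+b_2^{s}=1$, which exists because $t\mapsto b_1^{t}+b_2^{t}$ decreases continuously from $2$ to $0$.

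For a finite word $w=i_1\cdots i_k\in\{1,2\}^{k}$ I write $\phi_w=\phi_{i_1}\circ\cdots\circ\phi_{i_k}$, $F_w=\phi_w(F)$ and $b_w=b_{i_1}\cdots b_{i_k}$. Iterating $F=\phi_1(F)\sqcup\phi_2(F)$ gives, for each $k$, a \emph{disjoint} decomposition $F=\bigsqcup_{|w|=k}F_w$, and because the $\phi_i$ are contractions the mesh $\max_{|w|=k}\diam F_w$ tends to $0$; thus the nested partitions $\{F_w\}$ generate the Borel $\sigma$-algebra of $F$. First I would define $\mu$ on cylinders by $\mu(F_w)=b_w^{s}$. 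The defining relation $b_1^{s}+b_2^{s}=1$ is precisely the consistency condition $\mu(F_w)=\mu(F_{w1})+\mu(F_{w2})$, so by the Carathéodory/Kolmogorov extension theorem this prescription determines a Borel probability measure $\mu$ with $\mathrm{supp}\,\mu\subseteq F$.

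The heart of the argument is a separation estimate furnished by the \emph{lower} bounds $b_i$. From $|\phi_w(x)-\phi_w(y)|\ge b_w|x-y|$ one gets, for any word $v$ and indices $i\neq j$,
\[ \dist(F_{vi},F_{vj})=\dist\big(\phi_v(\phi_i(F)),\phi_v(\phi_j(F))\big)\ge b_v\,\dist(\phi_i(F),\phi_j(F))=b_v\,\rho. \]
Fixing $r<\rho/2$, I would then take the stopping family $\mathcal W$ of all words $w$ that are minimal with $b_w\rho\le 2r$; these partition $F$ and satisfy $b_w\le 2r/\rho$. If $w\neq w'$ lie in $\mathcal W$ with longest common prefix $v$, then $F_w\subseteq F_{vi}$ and $F_{w'}\subseteq F_{vj}$ for some $i\neq j$, so $\dist(F_w,F_{w'})\ge b_v\rho\ge b_{w^{-}}\rho>2r$ (here $w^{-}$ denotes $w$ with its last letter removed, and $v$ is a prefix of $w^{-}$, whence $b_v\ge b_{w^{-}}$). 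Since a ball $B(x,r)$ has diameter $2r$, it can therefore meet at most one cylinder of $\mathcal W$, and so
\[ \mu(B(x,r))\le \max_{w\in\mathcal W}\mu(F_w)\le \Big(\tfrac{2r}{\rho}\Big)^{s}=\Big(\tfrac{2}{\rho}\Big)^{s}r^{s}. \]
With $\delta=\rho/2$ and $C=(2/\rho)^{s}$, the mass distribution principle yields $\mathcal H^{s}(F)\ge(\rho/2)^{s}>0$, hence $\dim_H F\ge s$.

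The main obstacle is not the dimensional estimate itself, which becomes essentially automatic once the sibling separation $\dist(F_{vi},F_{vj})\ge b_v\rho$ is established, but the two supporting ingredients. The first is guaranteeing $\rho>0$: this rests on the disjointness of the union together with the compactness (closedness and boundedness) of $F$, and is the place where the structural hypotheses are genuinely needed. The second is the rigorous construction of $\mu$ from the cylinder masses; this is routine, since $\{F_w\}$ is a refining sequence of finite partitions with vanishing mesh, but it must be checked that the consistency encoded by $b_1^{s}+b_2^{s}=1$ indeed produces a countably additive measure. I emphasize that the lower Lipschitz bounds $b_i$ enter only through the separation estimate, while the contraction property is used solely to make the cylinder diameters shrink to zero, so that the generated partitions exhaust the Borel structure of $F$.
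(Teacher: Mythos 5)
Your proof is correct and takes essentially the same route as the source: the paper does not prove this lemma but quotes it (Falconer, Prop.~9.7; Stallard, Lemma~2.1), and the proof of the cited result is precisely your mass-distribution-principle argument --- the cylinder measure $\mu(F_w)=b_w^{s}$ made consistent by $b_1^{s}+b_2^{s}=1$, the sibling separation $\dist(F_{vi},F_{vj})\ge b_v\,\rho$, and the stopping-word estimate $\mu(B(x,r))\le (2r/\rho)^{s}$. Your caveat that $F$ must be taken compact so that $\rho=\dist(\phi_1(F),\phi_2(F))>0$ is also the right reading of the statement, since the cited result concerns the compact attractor and the set $H$ constructed in the paper's application (Lemma \ref{lemma-hausdorff-julia}) is indeed compact.
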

\begin{proof}[Proof of Lemma \ref{lemma-hausdorff-julia}]
	Since $ {\varphi(N_\xi)} \cap P(f) =\emptyset$, according to Proposition \ref{prop-SVinner}, there exists a disk $ D $ such that $ D\cap\mathcal{J}(g)\neq\emptyset $, and all branches $ G_n $ of $ g^n $ are well-defined (and univalent) in $ \overline{D} $. Without loss of generality, let us assume that the Denjoy-Wolff point of $ g $ is not in $ D $.
	
	Recall that preimages of any point in $ \mathcal{J}(g) $ are dense in $ \mathcal{J}(g) $ (Lemma \ref{lemma-preimages-inner}), and, as an straightforward consequence of Koebe distortion theorem together with the fact that Julia sets have empty interior, given a sequence of iterated inverse branches $ \left\lbrace G_n\right\rbrace _n $, $ \textrm{diam } G_n(D)\to 0 $.  Therefore, we can choose $ \phi_1= G_{n_1}|_D $ and $ \phi_2= G_{n_2}|_D $ such that $ \phi_1 (D)\sqcup \phi_2(D)=\emptyset$, $ \phi_2 (D)\subset D$ and $ \phi_1 (D)\subset D$ (for appropriate $ n_1 $, $ n_2 $ and suitable inverse branches).
	
	Now the proof continues as in \cite[Thm. A]{Stallard}; we outline it for completeness. First, note that $ \phi_1 $, $ \phi_2 $ satisfy the hypothesis of Lemma \ref{lemma-falconer} (indeed, the fact that they are contractions follows from Schwarz lemma; the existence of $ b_i $ follows from the univalence of $ \phi_i $ in $ \overline{D} $).
	Next, define 
	\[D_{i_1\dots i_n}=\phi_{i_1}\circ\dots\circ \phi_{i_n}(\overline{D}),\]
	\[H=\bigcup\bigcap_{n=1}^\infty D_{i_1\dots i_n},\] where the union is taken over all possible sequences $ i_1\dots i_n $, $ i_j\in \left\lbrace 1,2\right\rbrace  $. Then, $ H $ is a non-empty compact set and  \[ H=\phi_1(H)\sqcup \phi_2(H).\]According to Lemma \ref{lemma-falconer}, $ H $ has positive Hausdorff dimension. Moreover, the orbit under $ g^* $ of points in $ H $ comes back to $ H\subset D $ infinitely often. Thus, points in $ H $ do not converge to the Denjoy-Wolff point, meaning that $ H\subset \mathcal{J}(g) $,  as desired.
\end{proof}

Note that, more precisely, we proved that the set of points in $ \overline{N_\xi}\cap\partial\mathbb{D}  $ which come back to $ \overline{N_\xi} $ infinitely often under iteration has positive Hausdorff dimension.
\subsection{Proof of \ref{prop:C}}
We prove the different statements separately.

	 	\vspace{0.4cm}
\noindent(a) If $ U $ is a non-univalent Baker domain, then $ \mathcal{J}(g)\subset\overline{\Theta} $ and $ \mathcal{J}(g) $ is uniformly perfect (Lemma \ref{lemma-properties-inner-function}, Thm. \ref{thm-accesses-and-julia}). Thus, $ \Theta $ is infinite. Moreover, given $ \xi_1,\xi_2\in\Theta $, 
the image under $ \varphi $ of the radial segments at $ \xi_1 $ and $ \xi_2 $, together with infinity, i.e.
\[\varphi(R(\xi_1))\cup\varphi(R(\xi_2))\cup \left\lbrace \infty\right\rbrace \] is a Jordan curve in $ \widehat{\mathbb{C}} $, say $ \gamma $, and $ \widehat{\mathbb{C}} \smallsetminus\gamma$ has two connected components, each containing points of $ \partial U $. It follows that $ \partial U $ has infinitely many connected components.
The fact that, for all $ \xi\in\partial \mathbb{D} $, $ Cl(\varphi, \xi)\cap\mathbb{C} $ is contained in either one or  two connected components, and $ \varphi^*(\xi)=\infty $ in the latter case, follows from Lemma \ref{lemma-disconnected-cluster-sets}.

It is left to prove that each component $ C $ of $ \partial U $ contains $ Cl(\varphi, \xi)\cap\mathbb{C} $ for a unique $ \xi\in\mathcal{J}(g) $, with at most countably many exceptions. Since prime ends are symmetric with at most countably many exceptions \cite[Prop. 2.21]{Pom92}, we shall restrict to $ \xi\in\partial \mathbb{D} $ such that $ Cl(\varphi, \xi)\cap\mathbb{C} $ is contained in a unique component of $ \partial U $.

Assume first that $ \overline{\Theta}=\partial\mathbb{D} $. Let $ \xi_1, \xi_2\in  \partial\mathbb{D} $. Then, there exists $ \zeta_1,\zeta_2\in\Theta $, 
the image under $ \varphi $ of the radial segments at $ \zeta_1 $ and $ \zeta_2 $, together with infinity, i.e.
\[\varphi(R(\zeta_1))\cup\varphi(R(\zeta_2))\cup \left\lbrace \infty\right\rbrace \] is a Jordan curve in $ \widehat{\mathbb{C}} $, say $ \gamma $, and $ \widehat{\mathbb{C}} \smallsetminus\gamma$ has two connected components, one containing $ Cl(\varphi, \xi_1) \cap\mathbb{C}$ and the other containing $ Cl(\varphi, \xi_2) \cap\mathbb{C}$. This already implies that $ Cl(\varphi, \xi_1) \cap\mathbb{C}$ and $ Cl(\varphi, \xi_2) \cap\mathbb{C}$ lie in different components of $ \partial U $. See Figure \ref{fig-clusertsets}.

	 	\begin{figure}[htb!]\centering
	\includegraphics[width=15cm]{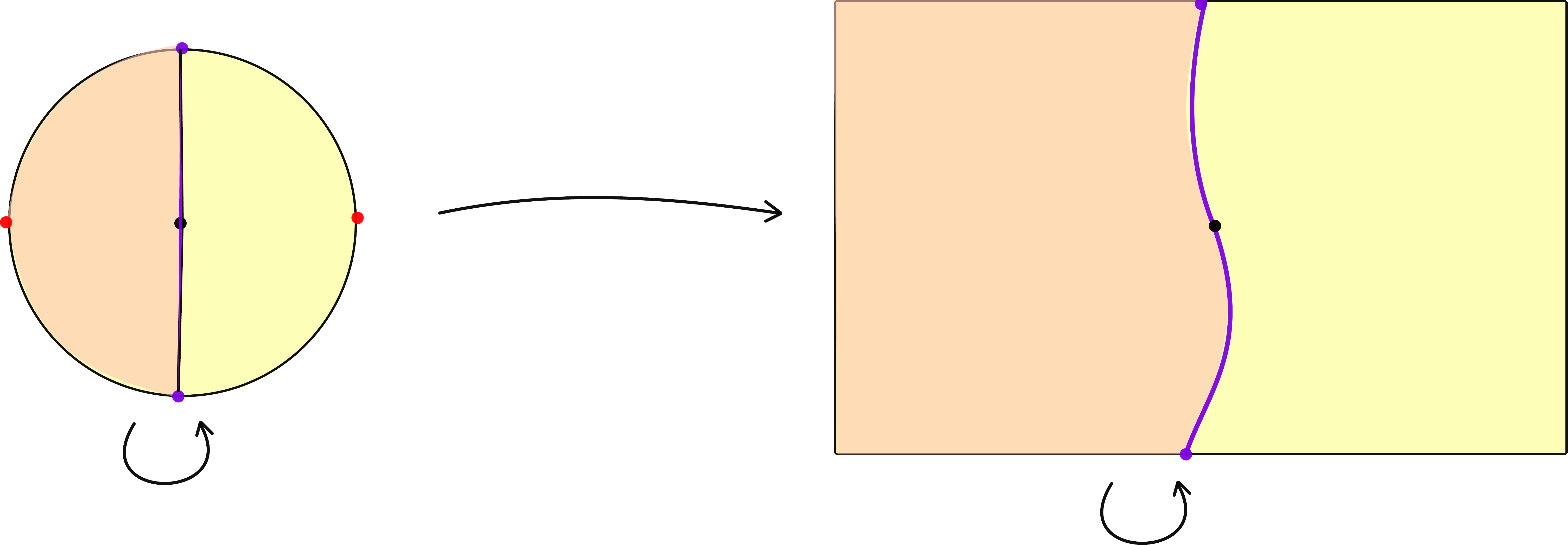}
	\setlength{\unitlength}{15cm}
	\put(-0.765, 0.2){\small$\xi_2$}
	\put(-0.91, 0.02){$g$}
	\put(-0.28, -0.025){$f$}
	\put(-0.62, 0.23){$\varphi$}
	\put(-0.97, 0.3){$\mathbb{D}$}
	\put(-0.03, 0.32){$ U$}
	\put(-0.23, 0.33){\small$\varphi^*(\zeta_1)$}
	\put(-0.235, 0.065){\small$\varphi^*(\zeta_2)$}
	\put(-0.275, 0.2){\small$\varphi(0)$}
	\put(-0.91, 0.2){\small$0$}
	\put(-0.9, 0.325){\small$\zeta_1$}
	\put(-0.9, 0.07){\small$\zeta_2$}
	\put(-1.02, 0.2){\small$\xi_1$}
	\caption{\footnotesize Set-up of the proof of \ref{prop:C}(a): schematic representation the Riemann map $ \varphi\colon\mathbb{D}\to U $, together with $ \xi_1, \xi_2\in\partial \mathbb{D} $, and the choice of $ \zeta_1, \zeta_2\in\Theta $.}\label{fig-clusertsets}
\end{figure}

Now, assume $ \overline{\Theta}\neq\partial\mathbb{D} $. Then, $ \partial\mathbb{D}\smallsetminus \overline{\Theta}$ is open, so consists of countably many (open) circular intervals on $ \partial\mathbb{D} $. These open intervals (which are in $ \mathcal{F}(g) $), together with their endpoints (in $ \mathcal{J}(g) $), correspond to countably many components of $ \partial U $, which we disregard. The remaining points are in $  \overline{\Theta}$, and we can proceed as in the previous case. This ends the proof of (a).

	 	\vspace{0.4cm}
\noindent
(b) We rely on the following version of the Gross star theorem for iterates of a meromorphic function.
The proof can be found in \cite[Thm. 3.1]{martipete2023bounded}, inspired in the general version of the Gross star theorem given by Kaplan \cite[Thm. 3]{Kaplan}.
\begin{thm}{\bf (Gross star theorem for iterates)}\label{thm-gross-star}
	Let $ f\colon\mathbb{C}\to\widehat{\mathbb{C}} $ be a meromorphic function, and let $ z_0\in\mathbb{C} $ and $ k\geq 1 $ be such that $ w_0=f^k(z_0) $ is defined and $ (f^k)'(z_0)\neq 0 $. Let $ W\ni  w_0$ be a simply connected domain; for $ \theta\in (0,2\pi) $, denote by $ \gamma_\theta $ the hyperbolic geodesic of $ W $ starting at $ w_0 $ in the direction $ \theta $. Then, for almost every $ \theta\in (0,2\pi)  $, $ \gamma_\theta $ is an arc connecting $ w_0 $ to an endpoint $ w_\theta\in\partial W $, and the branch $ F_k $ of $ f^{-k} $ that maps $ w_0 $ to $ z_0 $ can be continued analytically along $ \gamma_\theta $ into $ w_\theta $.
\end{thm}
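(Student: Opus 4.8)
The plan is to uniformize $W$, translate the statement into one about radial continuation on the unit disk, and then recognize the resulting assertion as the measure-theoretic core of the Gross--Kaplan star theorem applied to the germ of $(f^k)^{-1}$. First I would fix a Riemann map $\psi\colon\mathbb{D}\to W$ with $\psi(0)=w_0$. Since the hyperbolic geodesics of $W$ issuing from $w_0$ are exactly the $\psi$-images of the radii of $\mathbb{D}$, after a rotation we may write $\gamma_\theta=\psi(\{re^{i\theta}:0\le r<1\})$. By Beurling's theorem (Theorem~\ref{thm-capacity-Riemann-map}) the radial limit $w_\theta\coloneqq\psi^*(e^{i\theta})\in\partial W$ exists off a set of logarithmic capacity zero, hence for a.e.\ $\theta$; for these $\theta$ the geodesic $\gamma_\theta$ is an arc landing at $w_\theta$, which already gives the first conclusion. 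It remains to control the analytic continuation.

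Because $(f^k)'(z_0)\neq0$, the map $f^k$ is a local biholomorphism at $z_0$, so the branch $F_k$ of $(f^k)^{-1}$ with $F_k(w_0)=z_0$ is a genuine holomorphic germ. Pulling back by $\psi$, continuing $F_k$ along $\gamma_\theta$ is the same as continuing the germ $G\coloneqq F_k\circ\psi$ (with $G(0)=z_0$) along the radius $[0,e^{i\theta})$; by construction $f^k\circ G=\psi$, so $G(re^{i\theta})$ is precisely the lift through the meromorphic map $f^k$ of the geodesic, started at $z_0$. Collecting these radial continuations, $G$ is single-valued and univalent on a domain $\Sigma\subseteq\mathbb{D}$ that is star-shaped with respect to $0$, with $f^k\circ G=\psi|_{\Sigma}$. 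Writing $R(\theta)\coloneqq\sup\{r:re^{i\theta}\in\Sigma\}$, the theorem reduces to the claim that $R(\theta)=1$ for a.e.\ $\theta$, that is, that a.e.\ radius lies entirely in $\Sigma$.

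This last claim is the heart of the matter, and here the global analytic structure is indispensable (the statement is false for an arbitrary star-shaped $\Sigma$). A radius can be blocked at some $R(\theta)<1$ only if the lift leaves every compact subset of the domain of $f^k$, i.e.\ $G(re^{i\theta})$ tends either to a critical point of $f^k$, to a prepole, or to $\infty$ as $r\to R(\theta)^-$; equivalently, $\psi(R(\theta)e^{i\theta})$ is a singular value of $f^k$ attained along this branch. I would split the two failure modes. When the limit is a critical point or a prepole, $\psi(R(\theta)e^{i\theta})$ is a critical value of $f^k$ or a value over a prepole; since $\psi$ is injective and these target values form a countable set, each is attained at a single point of $\mathbb{D}$, hence blocks at most one direction, so only countably many directions are lost this way. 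The genuine obstacle is the asymptotic case $G(re^{i\theta})\to\infty$: here many directions may share the behaviour, and one must run Gross's length--area (Carleman-type) estimate, exactly as in Kaplan \cite{Kaplan} and \cite[Thm.~3.1]{martipete2023bounded}. The images $G([0,R(\theta))e^{i\theta})$ over distinct asymptotic directions are disjoint arcs tending to $\infty$ in the $z$-plane, and a Cauchy--Schwarz length--area inequality applied to $G$ forces the set of asymptotic directions to have zero Lebesgue measure. I expect this transfer---from \emph{``the blocking projects into the singular values''} to \emph{``the blocked directions are null''}---to be the main difficulty, precisely because it is where one must use that $G$ inverts a globally defined meromorphic map rather than an abstract conformal map on a star domain.

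Finally, for a.e.\ $\theta$ we simultaneously have $R(\theta)=1$ and the landing point $w_\theta=\psi^*(e^{i\theta})\in\partial W$; thus $F_k$ continues analytically along the whole geodesic $\gamma_\theta$ up to its endpoint $w_\theta$, which is the desired conclusion (Lindelöf's theorem \cite{Pom92} may be invoked to pin down the boundary value if one wants $F_k(\gamma_\theta(t))$ to converge). In sum, the argument is the classical Gross--Kaplan star theorem applied to the analytic germ $F_k=(f^k)^{-1}$; the only point needing care for iterates of a meromorphic $f$ is that $f^k$ is meromorphic on $\mathbb{C}$ with an isolated, countable set of prepoles, which merely enlarges by a null set the collection of directions discarded above.
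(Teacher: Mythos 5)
First, a caveat: the paper does not actually prove this theorem; it is quoted with a pointer to \cite[Thm.~3.1]{martipete2023bounded} and to Kaplan \cite{Kaplan}, so there is no internal proof to compare against. Your reduction is the right one and matches the strategy of the cited proof: uniformize $W$, use Beurling's theorem to get landing of almost every geodesic, pull the germ $F_k$ back to a star domain $\Sigma\subset\mathbb{D}$, observe that $G=F_k\circ\psi$ is single-valued and injective on $\Sigma$ (since $f^k\circ G=\psi$ and $\psi$ is injective), and classify the obstructions to $R(\theta)=1$; the trichotomy ``critical point / prepole / $\infty$'' is justified because the cluster set of the lift is a continuum contained in a totally disconnected set, hence a single point. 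The critical-point case is handled correctly: countably many critical values, each blocking at most one direction by injectivity of $\psi$.

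Two steps are genuinely incomplete, though. (i) The prepole case is not a countability argument. If the lift lands at a prepole $z_1$ with $f^j(z_1)=\infty$, $0<j<k$, then $f^k$ has no well-defined value at $z_1$; what one gets is that $\psi\bigl(R(\theta)e^{i\theta}\bigr)$ is an asymptotic value of $f^{k-j}$ along the path $f^j\circ G\to\infty$. Distinct blocked directions yield distinct such asymptotic values, and the set of (indirect) asymptotic values of a meromorphic function need not be countable, so ``these target values form a countable set'' is unjustified. These directions must be absorbed into the length--area analysis applied to the intermediate lift $f^j\circ G$ (or handled by induction on $k$), not discarded by counting. (ii) The length--area step, which you correctly identify as the heart of the matter, is asserted rather than carried out, and the naive version you describe does not close: the curves $G\bigl([0,R(\theta))e^{i\theta}\bigr)$ do have infinite Euclidean length, but $G(\Sigma)$ is an unbounded subset of $\mathbb{C}$ of possibly infinite Euclidean area, so Cauchy--Schwarz gives $\infty\leq\infty$; switching to the spherical metric makes the area finite, but then a curve tending to $\infty$ may have finite spherical length. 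Overcoming this tension is precisely the content of Gross's and Kaplan's arguments, so as written your proof defers the essential point to the same references the paper cites. The remaining scaffolding (Beurling, openness of the star, injectivity of $G$, Lindel\"of at the endpoint) is correct.
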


We have to prove that $ \mathcal{J}(g) $ is has empty interior (then, since it is uniformly perfect, it follows that it is a Cantor set, and, since $ \textrm{sing}(g)\subset \mathcal{J}(g)  $, $ \textrm{sing}(g) $ has empty interior). On the contrary, assume $ I\subset\mathcal{J}(g)$ is a (non-degenerate) circular interval.

The set \[X\coloneqq\bigcup\left\lbrace \varphi(R(\xi))\colon \xi\in\Theta\right\rbrace \cup \left\lbrace \infty\right\rbrace \] divides $ \widehat{\mathbb{C}} $ (and $ \partial U $) into infinitely many  components. Let $ J\subset\partial\mathbb{D}\smallsetminus \overline{\Theta} $ be a maximal circular interval. Then, $ Cl(\varphi, J) $ is contained in one  component of $ \partial U$, say $ C $,  and any other interval in $ \partial\mathbb{D}\smallsetminus \overline{\Theta} $ is contained in a different one.

Since preimages of points in $ \partial\mathbb{D} $ are dense in $ \mathcal{J}(g) $ there exists $ I_0\subset I $ and  $ n \geq 1$ such that $ g^n(I_0)\subset J $. Since critical points of $ f^n $ are discrete, take $ z_0=\varphi^*(\xi_0)\in\mathbb{C}$, for some $ \xi_0\in I_0$, not a critical point of $ f $. Then, $ f^n(z_0)=w_0\in C$, and let $ W \ni w_0$ be a simply connected domain, such that $ W\cap\partial U\subset C$ (note that this is possible because all other components of $ \partial U $ lie in different components of $ \widehat{\mathbb{C}}\smallsetminus X $). Then, by Theorem \ref{thm-gross-star}, the branch $ F_n $ of $ f^n $ sending $ w_0 $ to $ z_0 $ can be continued along an open set containing a continua $ K_0 \subset \mathbb{C}$ in $ C $. This implies that, for an interval $ I_1\subset I_0 $, \[\left\lbrace Cl(\varphi, \xi)\colon \xi \in I_1 \right\rbrace \subset K_0.\] This contradicts the fact that $ I_0\subset\mathcal{J}(g)\subset\overline{\Theta} $ (and thus accesses to infinity are dense in $ \left\lbrace Cl(\varphi, \xi)\colon \xi \in I \right\rbrace  $), implying that $ \mathcal{J}(g)$ is nowhere dense in $ \partial\mathbb{D} $.

Finally, if the Denjoy-Wolff point $ p\in\partial \mathbb{D} $ is not a singularity for $ g $, it follows from the local dynamics around $ p $ (which is either an attracting or a parabolic point, with one petal, for $ g $), that the Fatou set $ \mathcal{F}(g) $ is non-empty, and \[\mathcal{J}(g)=\left\lbrace \xi\in\partial\mathbb{D}\colon (g^*)^n(\xi)\not\to p\right\rbrace \cup \bigcup_{n\geq 0} (g^*)^n(p).\] In other words, the only way of converging to the Denjoy-Wolff point is either being a preimage of it, or being in the Fatou set. According to Theorem \ref{thm-ergodic-g}, the right-hand side set has zero $ \lambda $-measure. Therefore, $ \lambda (\mathcal{J}(g))=0$ and, since $ \textrm{sing}(g)\subset \mathcal{J}(g)  $, $ \lambda (\textrm{sing}(g))=0$, as desired.

	 	\vspace{0.4cm}
\noindent (c) Assume $ \mathcal{J}(g)\neq \partial\mathbb{D}$, we have to prove that periodic points are not dense in $ \partial U $. As in (b), note that the set \[X\coloneqq \bigcup\left\lbrace \varphi(R(\xi))\colon \xi\in\Theta\right\rbrace \cup \left\lbrace \infty\right\rbrace \] divides $ \widehat{\mathbb{C}} $ (and $ \partial U $) into infinitely many connected components.

Let $ I \subset \partial\mathbb{D} \smallsetminus\mathcal{J}(g)$ be an (open) circular interval, 
 which we can choose so that $ I $ and the Denjoy-Wolff point $ p\in\partial \mathbb{D} $ can be separated in $ \partial\mathbb{D} $ by two different points $ \xi_1, \xi_2\in\Theta $, i.e. $ p $ and $ I $ lie in different  circular arcs $ I_1, I_2 $  of $ \partial\mathbb{D}\smallsetminus\left\lbrace \xi_1, \xi_2\right\rbrace  $, say $ p\in I_1 $, $ I\subset I_2 $. Note that  \[\varphi(R(\xi_1))\cup\varphi(R(\xi_2))\cup \left\lbrace \infty\right\rbrace \] is a Jordan curve in $ \widehat{\mathbb{C}} $, say $ \gamma $, and $ \widehat{\mathbb{C}} \smallsetminus\gamma$ has two connected components, $ X_1 $ and $ X_2 $, with  $  Cl(\varphi, I_i)\cap\mathbb{C} \subset X_i$.
 
Since  $ g^n|_I \to p$ uniformly on compact sets, $ g^n(I)\subset I_1 $ for large $ n $, and \[Cl (\varphi, I)\cap \mathbb{C}\subset Cl(\varphi, I_2)\cap \mathbb{C}\subset X_2, \]\[ f^n(Cl(\varphi, I)\cap \mathbb{C})=Cl(\varphi, g^n(I))\cap \mathbb{C}\subset Cl(\varphi, I_1)\cap \mathbb{C}\subset X_1.\]
This implies that there are no periodic points in $ Cl(\varphi, I_2) \cap \mathbb{C}$, and thus periodic points are not dense in $ \partial U $.

	 	\vspace{0.4cm}
\noindent 
The proof of \ref{prop:C} is now complete. \hfill $ \square$

\subsection{Proof of \ref{teo:B}} Without loss of generality, assume that the Denjoy-Wolff point $ p $ of $ g $ does not belong to $ \overline{N_\xi} $. Since $ \mathcal{J}(g) $ is uniformly perfect, we can find a crosscut neighbourhood $ M_\xi $ such that $ M_\xi\subset N_\xi $, and there exists $ \xi_1, \xi_2\in \Theta $ such that $ p $ and $ M_\xi $ do not lie in the same circular arc of $ \partial\mathbb{D}\smallsetminus\left\lbrace \xi_1, \xi_2\right\rbrace  $. Therefore, for any crosscut neighbourhood $ N\subset \mathbb{D}$ of $ p $ small enough,  $$ \overline{\varphi(N)}\cap \overline{\varphi(M_\xi)}=\emptyset .$$
 
 According to Lemma \ref{lemma-hausdorff-julia}, the Hausdorff dimension of $ \overline{M_\xi}\cap\mathcal{J}(g)  $ is positive (more precisely, the set of points in $ \overline{M_\xi}\cap\partial\mathbb{D}  $ which come back to $ \overline{M_\xi} $ infinitely often under iteration has positive Hausdorff dimension). By Lemma \ref{lemma-existence-radial-limits}, there exists $ \xi\in   \overline{M_\xi}\cap\mathcal{J}(g)$ such that $ \varphi^*(\xi) \in\Omega (f)$. We can assume that the orbit of $ \xi $ comes back to $ M_\xi $ infinitely often. 
 
 We claim that $ \varphi^*(\xi)  $ does not belong to the Carathéodory set. Indeed, if $ \varphi^*(\xi)  $ belongs to the Carathéodory set, for any crosscut neighbourhood $ N\subset \mathbb{D} $ at the Denjoy-Wolff point $ p\in\partial \mathbb{D} $,  there exists $ k_0\geq0 $ such that, for all $ k\geq k_0 $, \[f^{k}(x)\in\overline{\varphi(N)}.\] Since $ \overline{\varphi(N)}\cap \overline{\varphi(M_\xi)}=\emptyset $, this
 contradicts the fact that the orbit of $ \xi $ visits $ \overline{M_\xi} $ infinitely often. The proof of \ref{teo:B} is now complete. \hfill $ \square $

\section{Boundary dynamics. Proof of \ref{teo:D}}\label{sect-boundary-dynamics}

\begin{proof}[Proof of \ref{teo:D}] Before proceeding with the proof, note that, since  there exists a crosscut neighbourhood $ N_\xi $ of $ \xi\in\mathcal{J}(g) $ such that  $ \overline{\varphi(N_\xi)} \cap P(f) =\emptyset$, Lemma \ref{lemma-hausdorff-julia} guarantees that the Hausdorff dimension of $ \mathcal{J}(g)\cap \overline{(N_\xi)}   $ is positive. In particular, there exists $ \xi \in \mathcal{J}(g)\cap \overline{(N_\xi)} $ such that $ \varphi^*(\xi)$ exists and belongs $\Omega(f) $.
	
We split the proof in several steps.
	
	 	\vspace{0.4cm}
\noindent	(1)
	{\em Construction of a region of expansion.}
			Let us consider an appropriate neighbourhood $ W $ of $ \overline{\varphi(N_\xi )} $, and prove that, with respect to the hyperbolic metric in $ W $, inverse branches are contracting, and uniformly contracting when restricted to compact sets.
		\begin{lemma}\label{lemma-hyperbolic-metric}
			Let $ W\coloneqq \mathbb{C}\smallsetminus P(f) $. Then, the following holds.
				\begin{enumerate}[label={\em(\alph*)}]
				\item  $ f\colon f^{-1}(W)\to W $ is {\em locally expanding} with respect to the hyperbolic metric $ \rho_W $, i.e. \[\rho_W(z)\leq \rho_W(f(z))\cdot \left| f'(z)\right| , \hspace{0.5cm}\textrm{for all }z\in f^{-1}(W).\]
				\item For all $ z\in W $, there exists a neighbourhood $ D_z\subset W $ of $ z $ such that all branches $ F_n $ of $ f^{-n} $ are well-defined in $ D_z $, $ F_n(D_z)\subset W $ and \[\dist_W(F_n(x), F_n(y))\leq \dist_W(x,y), \hspace{0.5cm}\textrm{for all }x,y\in D_z.\]
				\item Let $ z\in W $, and let $ D_W(z,R) $ be the hyperbolic disk centered at $ z $ of radius $ R $. Then, there exists $ C\in (0,1) $ such that, if $ F_n(z)\in D_W(z, 2R) $ and $ r\in (0,R) $, then 
				\[ F_n(D_W(z,r))\subset D_W(F_n(z), C\cdot r )\subset D_W(z, 3R).\] 
			\end{enumerate}
		\end{lemma}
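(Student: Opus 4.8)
The plan is to realize $W=\mathbb{C}\setminus P(f)$ as a hyperbolic Riemann surface on which $f$ acts by a covering map, and then to run the classical comparison between the hyperbolic metrics of $W$ and of the smaller domains $f^{-n}(W)$. First I would record the two structural facts that drive everything (here $W$ is hyperbolic since $P(f)$ is infinite). Since $SV(f)\subset P(f)$ and $P(f)$ is forward invariant (i.e.\ $f(P(f))\subset P(f)$ away from singularities), one gets $f^{-1}(W)\subset W$, and more generally $f^{-n}(W)\subset f^{-1}(W)\subset W$ for every $n\geq 1$. Because $W$ contains no singular value of $f$, the restriction $f\colon f^{-1}(W)\to W$ is an unbranched holomorphic covering map, hence a local isometry for the respective hyperbolic metrics:
\[\rho_{f^{-1}(W)}(z)=\rho_W(f(z))\,|f'(z)|,\qquad z\in f^{-1}(W).\]
On the other hand, the inclusion $f^{-1}(W)\hookrightarrow W$ is holomorphic, so Schwarz--Pick gives $\rho_W(z)\leq\rho_{f^{-1}(W)}(z)$ on $f^{-1}(W)$. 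Combining the two relations yields $\rho_W(z)\leq\rho_W(f(z))\,|f'(z)|$, which is exactly part (a).

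For part (b) I would fix $\varepsilon$ smaller than a third of the injectivity radius at $z$ and set $D_z=D_W(z,\varepsilon)$, so that the concentric disk $D_W(z,3\varepsilon)$ is still simply connected. Since $SV(f^n)\subset\bigcup_{k=0}^{n-1}f^k(SV(f))\subset P(f)$, the domain $W$ avoids every $SV(f^n)$; by the monodromy theorem all branches $F_n$ of $f^{-n}$ extend holomorphically to $D_W(z,3\varepsilon)$, with $F_n(D_W(z,3\varepsilon))\subset f^{-n}(W)\subset W$. Evaluating (a) at $w=F_n(x)$ shows each $F_n$ is non-expanding for $\rho_W$, i.e.\ $\rho_W(F_n(x))\,|F_n'(x)|\leq\rho_W(x)$. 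Integrating this inequality along a minimizing $W$-geodesic from $x$ to $y$ — which lies in $D_W(z,3\varepsilon)$ whenever $x,y\in D_z$, since its points stay within $\dist_W(x,y)$ of $x$ — yields $\dist_W(F_n(x),F_n(y))\leq\dist_W(x,y)$.

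Part (c) is the quantitative heart, and the main obstacle is to produce a contraction factor $C<1$ uniform in both $n$ and the choice of branch. Writing the infinitesimal contraction of $F_n$ via the covering relation for $f^n$ together with the monotonicity $\rho_{f^{-n}(W)}\geq\rho_{f^{-1}(W)}$ (valid because $f^{-n}(W)\subset f^{-1}(W)$), I obtain
\[\frac{\rho_W(F_n(x))\,|F_n'(x)|}{\rho_W(x)}=\frac{\rho_W(F_n(x))}{\rho_{f^{-n}(W)}(F_n(x))}\leq\frac{\rho_W(F_n(x))}{\rho_{f^{-1}(W)}(F_n(x))}.\]
Everything thus reduces to the single ratio $u\coloneqq\rho_W/\rho_{f^{-1}(W)}$, which is continuous and strictly below $1$ on $f^{-1}(W)$ (the inclusion being proper makes Schwarz--Pick strict). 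On the compact set $\overline{D_W(z,3R)}$ points stay away from $\partial W$, while near the remaining boundary $\partial f^{-1}(W)\cap W$ one has $\rho_{f^{-1}(W)}\to\infty$, so $u$ extends continuously by $0$ there. Hence $u$ attains a maximum $C\coloneqq\sup_{\,\overline{D_W(z,3R)}\cap f^{-1}(W)}u<1$, uniform in $n$ and the branch.

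It then remains to confine the relevant image to $\overline{D_W(z,3R)}$ so that the factor $C$ applies. For $x\in D_W(z,r)$ and a minimizing geodesic $\gamma$ from $z$ to $x$, non-expansion gives $\dist_W(F_n(\gamma(t)),F_n(z))\leq\dist_W(\gamma(t),z)<r<R$, while $F_n(z)\in D_W(z,2R)$ by hypothesis; the triangle inequality then places $F_n(\gamma(t))\in D_W(z,3R)$ (and automatically in $f^{-1}(W)$). Integrating the bound $C$ along $\gamma$ yields $\dist_W(F_n(x),F_n(z))\leq C\,\dist_W(z,x)\leq Cr$, i.e.\ $F_n(D_W(z,r))\subset D_W(F_n(z),Cr)$; a final triangle inequality using $Cr<R$ and $F_n(z)\in D_W(z,2R)$ gives $D_W(F_n(z),Cr)\subset D_W(z,3R)$, completing (c).
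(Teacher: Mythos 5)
Your argument is correct and follows essentially the same route as the paper: parts (a)–(b) are the standard covering-map/Schwarz–Pick facts (which the paper simply cites as standard), and for (c) you extract the uniform constant $C<1$ from the compactness of $\overline{D_W(z,3R)}$ in $W$ together with the strict Schwarz–Pick inequality for the inclusion $f^{-1}(W)\subset W$, then propagate it to $f^{-n}$ via $\rho_{f^{-n}(W)}\geq\rho_{f^{-1}(W)}$ — which is exactly the paper's chain $\rho_W\leq C\,\rho_W(f)\,|f'|\leq C\,\rho_W(f^n)\,|(f^n)'|$ written in terms of the ratio $u=\rho_W/\rho_{f^{-1}(W)}$. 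Your write-up is somewhat more explicit about where the branches are defined and why the geodesics stay in the region where the bound applies, but the underlying mechanism is identical.
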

	
	Note that $ \overline{\varphi(N_\xi)}\subset W $, since $ \overline{\varphi(N_\xi)}\cap P(f)=\emptyset $. Note also that the constant $ C $ does not depend on $ n $.
		\begin{proof}
			The first two items are standard in transcendental dynamics (see e.g. \cite[Prop. 5.2]{JF23}, note however that $ W $ may be disconnected). For the third item, 	note that, by the Schwarz-Pick lemma \cite[Thm. I.4.1]{CarlesonGamelin} and the triangle inequality, we have that \[ F_n(D_W(z,r))\subset D_W(F_n(z), r )\subset D_W(z, 3R).\] This last disk is relatively compact in $ W $, and hence there exists $ C\in (0,1) $ such that \[\rho_W(z)\leq C\cdot \rho_W(f(z))\cdot \left| f'(z)\right| , \hspace{0.5cm}\textrm{for all }z\in D_W(z, 3R).\] Then, by the Schwarz-Pick lemma \cite[Thm. I.4.1]{CarlesonGamelin},
			\begin{align*}
				\rho_W(z)&\leq  C\cdot \rho_W(f(z))\cdot \left| f'(z)\right| \\&\leq C\cdot  \rho_W(f^{n-1}(f(z)))\cdot\left| (f^{(n-1)})'(f(z))\right| \cdot  \left| f'(z)\right| \\&\leq C\cdot \rho_W(f^n(z))\cdot \left| (f^n)'(z)\right|  ,
			\end{align*} for all $ z\in D_W(z, 3R) $, implying the first inclusion.  The last inclusion follows from the triangle inequality. 
		\end{proof}
	
		 	\vspace{0.4cm}
	\noindent	(2) {\em Construction of accessible periodic points.} Let us start with the following lemma.
		\begin{lemma}\label{lemma-construction-periodic-point}
			Let $ \zeta\in \mathcal{J}(g) $ be such that $ x=\varphi^*(\zeta)\in \mathbb{C}$ and there exists a crosscut neighbourhood $ N_\zeta $ such that $ \overline{\varphi(N_\zeta )} \cap P(f)=\emptyset$. Then, for all $ r>0 $, there exists a periodic point $ p $ in $ D_W(x,r) $, which is accessible from $ U $.
			
			\noindent Moreover, given a finite collection of crosscut neighbourhoods $ \left\lbrace N_{\xi_k}\right\rbrace _{k=1}^n $, with $ \xi_k\in\mathcal{J}(g) $, $ p $ can be taken with $  f^{n_k}(p)\in\overline{\varphi(N_{\xi_k} )}$, for appropriate $ n_k\in\mathbb{N} $.
		\end{lemma}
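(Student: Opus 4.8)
The plan is to realise the periodic point as the landing point of a uniformly contracting inverse branch of a suitable iterate $f^M$. I organise all the combinatorics on the disc through $g$ and then transport them to $U$ by $\varphi$, exploiting that inverse branches of $f$ preserve $W\coloneqq\mathbb{C}\smallsetminus P(f)$ and contract its hyperbolic metric $\rho_W$.

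\emph{Step 1: a contracting return branch with prescribed itinerary.} Since $\zeta\in\mathcal{J}(g)$, the iterated (radial) preimages of $\zeta$ are dense in $\mathcal{J}(g)$ by Lemma~\ref{lemma-preimages-inner}; hence each $N_{\xi_k}$, being a crosscut neighbourhood of a point of $\mathcal{J}(g)$, contains some iterated preimage $\theta_k$ of $\zeta$. Because $\overline{\varphi(N_\zeta)}\cap P(f)=\emptyset$, no singular value is ever met along the backward orbit of $\zeta$, so every inverse branch involved is well defined. I therefore concatenate inverse branches of $g$ carrying $\zeta$ successively to $\theta_1\in N_{\xi_1}$, \dots, $\theta_n\in N_{\xi_n}$ and finally back into $N_\zeta$, obtaining a single branch $G_M$ of $g^{-M}$ on $N_\zeta$ with $G_M(N_\zeta)$ a crosscut neighbourhood compactly contained in $N_\zeta$ (away from its boundary arc) whose forward itinerary meets every $N_{\xi_k}$. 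As in the proof of Lemma~\ref{lemma-hausdorff-julia}, the Schwarz lemma together with $\intr\,\mathcal{J}(g)=\emptyset$ forces $\diam G_M^{\,j}(N_\zeta)\to 0$, so $G_M$ has a unique fixed point $\eta^\ast\in\partial\mathbb{D}\cap\overline{N_\zeta}$, a repelling periodic point of $g$, hence $\eta^\ast\in\mathcal{J}(g)$.

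\emph{Step 2: contraction in $W$ and extraction of $p$.} Set $F_M\coloneqq\varphi\circ G_M\circ\varphi^{-1}$, an inverse branch of $f^M$ on $\varphi(N_\zeta)\subset W$. The key point is that $F_M$ strictly contracts $\rho_W$, even though its forward itinerary passes through the sets $\overline{\varphi(N_{\xi_k})}$, which may meet $P(f)$. Indeed, for $w\in\varphi(N_\zeta)$ the orbit $F_M(w),f(F_M(w)),\dots,f^{M}(F_M(w))=w$ consists of backward images of $w\in W$; since $P(f)$ is forward invariant we have $f^{-1}(W)\subset W$, so the whole finite orbit stays in $W$, and telescoping the local expansion of Lemma~\ref{lemma-hyperbolic-metric}(a) along it yields non-expansion, the uniform factor being supplied by Lemma~\ref{lemma-hyperbolic-metric}(b),(c). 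Fixing $w_0\in\varphi(N_\zeta)$ and an arc $\tau$ joining $w_0$ to $F_M(w_0)$ inside $\varphi(N_\zeta)$, the curve $\gamma\coloneqq\bigcup_{j\ge0}F_M^{\,j}(\tau)\subset U$ has finite $\rho_W$-length, hence lands at a point $p$; completeness of $\rho_W$ gives $p\in W\subset\mathbb{C}$ and $f^M(p)=p$. Since $\bigcup_j G_M^{\,j}(\varphi^{-1}(\tau))$ lands at $\eta^\ast\in\partial\mathbb{D}$, the curve $\gamma$ approaches $\partial U$ from within $U$, so $p\in\partial U$ is accessible, $p=\varphi^\ast(\eta^\ast)$, and by construction $f^{m_k}(p)\in\overline{\varphi(N_{\xi_k})}$ for suitable $m_k$.

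\emph{Step 3: localisation and the first assertion.} To force $p\in D_W(x,r)$, I shrink $N_\zeta$ and apply Lemma~\ref{lemma-hyperbolic-metric}(c): choosing the base point $z$ on the access of $x=\varphi^\ast(\zeta)$ so close to $x$ that the displacement $\dist_W(z,F_M(z))$ is small, the uniform factor $C\in(0,1)$ confines the fixed point $p=\lim_j F_M^{\,j}(z)$ to $D_W(x,r)$. The first assertion is the special case $n=0$. The main obstacle is Step~2: proving that the composite inverse branch genuinely contracts $\rho_W$ while its forward orbit is allowed to leave the region controlled by $P(f)$, and then verifying that the contraction limit is an honest accessible boundary point rather than escaping to $\infty$ or being absorbed into $P(f)$ — both resolved by the backward invariance $f^{-1}(W)\subset W$, which keeps the relevant orbit inside $W$.
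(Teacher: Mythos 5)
Your proposal follows the same architecture as the paper (prescribe an itinerary through dense iterated preimages in $\mathcal{J}(g)$, pull back by an inverse branch of $f^M$, contract in the hyperbolic metric of $W=\mathbb{C}\setminus P(f)$, and land the orbit of a connecting arc to get an accessible periodic point), but it has a genuine gap exactly where the paper does the most work: the claim in Step 2 that $F_M$ \emph{strictly} contracts $\rho_W$ with a uniform factor. Lemma \ref{lemma-hyperbolic-metric}(a),(b) only give non-expansion, $\dist_W(F_n(x),F_n(y))\leq\dist_W(x,y)$; the strict factor $C\in(0,1)$ of part (c) is available only when the backward image $F_n(z)$ returns to the fixed relatively compact set $D_W(z,2R)$, because $C$ is the supremum of a local contraction ratio over $\overline{D_W(z,3R)}\Subset W$. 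Your itinerary deliberately sends the backward orbit through $\overline{\varphi(N_{\xi_k})}$, which may meet $P(f)$ and in any case lies far from $x$, where the pointwise ratio $\rho_W(z)/(\rho_W(f(z))|f'(z)|)$ can degenerate to $1$; backward invariance $f^{-1}(W)\subset W$ keeps the orbit in $W$ but supplies no uniform contraction. Consequently the lengths $\ell_{\rho_W}(F_M^{\,j}(\tau))$ are only known to be bounded by $\ell_{\rho_W}(\tau)$, not summable, so the finiteness of the length of $\gamma=\bigcup_j F_M^{\,j}(\tau)$, the landing at $p$, and the localisation of Step 3 are all unjustified. This is precisely why the paper's proof introduces the interval $I$, the set $K$ of density points whose Stolz angles map into $D_W(x,r/4)$, and the combinatorial requirement $\#\{j: G_j(\zeta)\in I\}\geq n_0$ with $C^{n_0}<1/2$: the backward orbit is forced to revisit a fixed compact part of $W$ near $x$ enough times to accumulate a definite contraction, after which $\overline{F_n(D_W(x,r))}\subset D_W(x,r)$ and Brouwer's fixed point theorem produce $p$ without ever needing the full orbit to stay near $x$.

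A secondary, smaller issue: your assertion that $\overline{\varphi(N_\zeta)}\cap P(f)=\emptyset$ implies ``no singular value is ever met along the backward orbit of $\zeta$'' conflates two different things. The hypothesis guarantees that all inverse branches of all iterates $f^n$ are well defined on a neighbourhood of $x$ (via Prop.\ \ref{prop-SVinner}); it says nothing about singular values lying over the intermediate stops $N_{\xi_k}$, and indeed the lemma allows $\overline{\varphi(N_{\xi_k})}\cap P(f)\neq\emptyset$. This does not break your construction (the branches are pulled back from $N_\zeta$, so only the domain over $x$ matters), but the justification as written is not the right one. The remaining ingredients — the shrinking diameters of $G_M^{\,j}(N_\zeta)$, the identification of the landing point with a fixed point of $f^M$ on $\partial U$, and the extraction of the times $n_k$ — are sound once the uniform contraction is actually established.
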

	Note that, in particular, $ p $ can be taken of arbitrarily large period.
\begin{proof}
	Let us start by proving the following claim.
	\begin{claim}\label{claim}
		There exists $ \rho>0 $, a closed non-degenerate circular interval $ I$ and a subset $ K\subset I $ such that
		\begin{enumerate}
			\item $ \overline{K}=I$;
			\item for all $ \eta\in K$, $ \Delta_\rho (\zeta)\subset D_W(x, r/4) $;
				\item for all $ \eta_1\in I $ and $ \rho_1>0 $, there exists $ \eta_2 \in K$ so that $ R_\rho(\eta_1)\cap \Delta_\rho (\eta_2)\neq\emptyset $;
			\item for all $ \eta\in I $, iterated inverse branches are well-defined in $ D(\eta, \rho) $ and \[ G_n(R_\rho(\eta))\subset \Delta_\rho (G_n(\eta)) \]
			where $ R_\rho $ and $ \Delta_\rho $ denote the  radial segment and Stolz angle, respectively (for some opening $ \alpha\in (0, \pi/2) $ which is fixed throughout the whole proof of \ref{teo:D}).
		\end{enumerate}  
	\end{claim}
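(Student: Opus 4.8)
The plan is to secure, in the right order, a \emph{single} scale $\rho>0$ and a single interval $I$ around $\zeta$ for which all four items hold simultaneously, reading them as follows: (1) is density of $K$ in $I$; (2) says that $\varphi$ sends the truncated Stolz angles $\Delta_\rho(\eta)$ into the prescribed hyperbolic disk $D_W(x,r/4)$; (3) is a geometric approximation of radial segments at $I$-points by Stolz angles at $K$-points; and (4) is the Stolz-angle distortion control for the iterated inverse branches $G_n$ of $g$. The delicate point is that the choices are not independent: the scale coming from the inverse-branch control must then be shrunk to force (2), and only afterwards is the approximating set $K$ extracted, checking at each reduction that (4) survives.

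First I would pass to the inner function. Since $\varphi(SV(g)\cap\mathbb{D})\subset SV(f)\cap U$ and $\overline{\varphi(N_\zeta)}\cap P(f)=\emptyset$, the crosscut neighbourhood $N_\zeta$ is disjoint from the postsingular set $P(g)$ of $g$ in $\mathbb{D}$; in particular $\zeta$ is regular (Prop.~\ref{prop-SVinner}). With the opening $\alpha\in(0,\pi/2)$ already fixed for the proof of \ref{teo:D}, Proposition~\ref{prop-radial-limits} gives $\rho_0>0$ with $D(\zeta,\rho_0)\cap P(g)=\emptyset$ such that all branches $G_n$ of $g^{-n}$ are well-defined and univalent on $D(\zeta,\rho_0)$ and satisfy $G_n(R_\rho(\eta))\subset\Delta_\rho(G_n(\eta))$. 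Taking $I:=\overline{D(\zeta,\rho')}\cap\partial\mathbb{D}$ with $\rho'<\rho_0/2$, each $\eta\in I$ has $D(\eta,\rho_0/2)\subset D(\zeta,\rho_0)$, so applying the proposition at $\eta$ (the distance to $P(g)$ being at least $\rho_0/2$ uniformly) yields (4) for every $\eta\in I$ with a common scale; univalence on a fixed-size disk makes the control uniform via Koebe distortion.

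Next I would reduce $\rho$ and $\rho'$ to force (2). Since $\varphi^*(\zeta)=x$, the Lehto--Virtanen theorem gives $\varphi$ an angular limit $x$ at $\zeta$: for a slightly wider opening $\alpha''>\alpha$ and all small heights, $\varphi$ maps $\Delta_{\alpha'',\rho}(\zeta)$ into any prescribed Euclidean ball about $x$, and as $x\in W=\mathbb{C}\setminus P(f)$ this ball can be taken inside $D_W(x,r/4)$. The elementary observation is that for $\eta$ close enough to $\zeta$ and heights below $\rho$, the cone $\Delta_{\alpha,\rho}(\eta)$ is contained both in $\Delta_{\alpha'',\rho}(\zeta)$ and in a small ball about $\zeta$; hence a final shrinking of $\rho$ and $\rho'$ makes $\varphi(\Delta_\rho(\eta))\subset D_W(x,r/4)$ for \emph{every} $\eta\in I$, which is stronger than (2). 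I would then define $K\subset I$ to be the set of $\eta$ where the radial limit $\varphi^*(\eta)$ exists and lies in $\Omega(f)$; by Theorem~\ref{thm-capacity-Riemann-map} (cf. Lemma~\ref{lemma-existence-radial-limits}) its complement in $I$ has logarithmic capacity zero, hence empty interior, so $\overline{K}=I$, which is (1).

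Finally (3) reduces to a one-line half-plane computation, which I expect to be routine. A point $\eta_1+it$ of the radial segment $R_{\rho_1}(\eta_1)$ lies in $\Delta_{\alpha,\rho}(\eta_2)$ exactly when $|\eta_1-\eta_2|/\alpha<t<\min(\rho,\rho_1)$, and such $t$ exists whenever $|\eta_1-\eta_2|<\alpha\min(\rho,\rho_1)$; since $\overline{K}=I$, one can always choose $\eta_2\in K$ this close to $\eta_1$. The main obstacle is therefore not any single step but the bookkeeping of scales: one must fix $\rho_0$ and the branch control from Proposition~\ref{prop-radial-limits} \emph{first}, then shrink to make all Stolz-angle images small uniformly over $I$, then extract the density set $K$, verifying at each reduction that the uniform distortion control (4) is preserved. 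Once this ordering is respected, (1)--(4) hold for the final $\rho$, $I$, and $K$, proving the Claim.
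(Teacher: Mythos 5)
Your handling of items (1), (3) and (4) is essentially sound, but the step that is supposed to deliver item (2) contains a genuine error. You claim that for $\eta$ close enough to $\zeta$ and small heights the cone $\Delta_{\alpha,\rho}(\eta)$ is contained in a wider Stolz angle $\Delta_{\alpha'',\rho}(\zeta)$ at $\zeta$, and hence that the angular limit of $\varphi$ at $\zeta$ forces $\varphi(\Delta_\rho(\eta))\subset D_W(x,r/4)$ for \emph{every} $\eta\in I$. That inclusion of cones is false for every $\eta\neq\zeta$: in the half-plane model a point $\eta+it\in\Delta_{\alpha,\rho}(\eta)$ lies in $\Delta_{\alpha'',\rho}(\zeta)$ only when $t>\left|\eta-\zeta\right|/\alpha''$, so the bottom part of the cone at $\eta$ always escapes every Stolz angle at $\zeta$, no matter how small $\left|\eta-\zeta\right|$ and $\rho$ are. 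The angular limit at $\zeta$ therefore gives no control on $\varphi$ along Stolz angles based at nearby points. Worse, in the situation at hand the uniform statement cannot hold: $\zeta\in\mathcal{J}(g)\subset\overline{\Theta}$, so there are points $\eta$ arbitrarily close to $\zeta$ with $\varphi^*(\eta)=\infty$, and for those $\varphi(\Delta_\rho(\eta))$ is unbounded for every $\rho>0$. Your fallback set $K=\left\lbrace \eta\in I\colon \varphi^*(\eta)\in\Omega(f)\right\rbrace$ does not repair this, since the radial limit at such $\eta$ may exist yet lie far from $x$, so (2) fails on your $K$ as well.

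This is precisely why the paper argues measure-theoretically rather than uniformly. It sets $X=(\varphi^*)^{-1}(D_W(x,r/4))$, which has positive Lebesgue measure because $\omega_U(D_W(x,r/4))>0$; writes $X=\bigcup_n X^n$ with $X^n=\left\lbrace \xi\in X\colon \Delta_{\rho/n}(\xi)\subset D_W(x,r/4)\right\rbrace$, using that radial limits are angular limits pointwise on $X$; picks $n$ with $\lambda(X^n)>0$; and takes $K$ to be the set of Lebesgue density points of $X^n$ and $I$ a closed interval in which $K$ is dense. Item (2) is then asserted only on $K$, which is all that is needed later in the proof of Lemma \ref{lemma-construction-periodic-point}. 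Your treatment of (1), (3) and (4) could be grafted onto that construction, but as written your derivation of (2) does not work and cannot be fixed by further shrinking of scales.
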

	\begin{proof}
		Note that $ \omega_U(D_W(x,r/4))>0 $, so $ X\coloneqq (\varphi^*)^{-1}(D(x,r)) $ has positive Lebesgue measure. Without loss of generality, we can assume $ X $ is contained in a circular interval for which condition (4) is satisfied, for some $ \rho>0 $. Now, write\[
		X^n\coloneqq \left\lbrace \xi\in X\colon \Delta_{\rho/n}(\xi)\subset D_W(x,r/4)\right\rbrace.
		\]
		Since radial and angular limits coincide, we have \[X=\bigcup_{n\geq 0} X^n,\] implying that $ \lambda (X^n)>0 $ for some $ n\geq 0 $. Let $ K $ be the set of Lebesgue density points for $ X^n $. Note that $ \lambda (K)=\lambda (X^n)>0 $. Without loss of generality, we can assume that $ \overline{K} $ is connected (and hence it is a non-degenerate closed circular interval). Denote this closed circular interval by $ I $.
		
		Replacing $ \rho $ by $ \rho/n $, we constructed a circular interval $ I $ and a subset of it, $ K\subset I $, which satisfy conditions (1), (2) and (4). Then, condition (3) follows straightforward from the fact that $ K $ is dense in $ I $ (replacing $ I $ by a smaller subinterval if needed). This ends the proof of the claim.
	\end{proof}

	Without loss of generality, we can assume $ \zeta\in I $ (otherwise, replace $ \zeta $ by a point in $ I $, and prove the lemma for this new point; this already implies the theorem for the original $ \zeta $).

Now, let $ C $ be the constant of hyperbolic contraction given by Lemma \ref{lemma-hyperbolic-metric}(c), and let $ n_0 $ be such that $ C^{n_0}<1/2 $. Since iterated preimages of any point in $ \mathcal{J}(g) $ are dense in $ \mathcal{J}(g) $ (Lemma \ref{lemma-preimages-inner}), we can find an inverse branch $ G_n $ of $ g^n $, well-defined in $\bigcup_{\eta\in I} D(\eta, \rho)$, such that, if we denote by $ G_j $, $ j=1,\dots, n $, the inverse branch of $ g^j $ sending $ \zeta $ to $ g^{n-j}(G_n(\zeta)) $, it holds\begin{enumerate}
	\item $ \#\left\lbrace j=1,\dots, n\colon G_j(\zeta)\in I\right\rbrace \geq n_0 $;
	\item $ G_n(\zeta) \in I$;
	\item if we denote by $ I_{\xi_k} $ the circular arc bounded by $ N_{\xi_k} $, there exists $ j\in \left\lbrace 1,\dots, n\right\rbrace  $ with $ G_j(\zeta)\in I_{\xi_k} $.
\end{enumerate}

Now, let $ F_j $ be the inverse branch of $ f^j $ corresponding to $ G_j $ (i.e. the one such that $ \varphi\circ G_j=F_j\circ \varphi $ in $ \Delta_\rho (\zeta) $), which is well-defined in $ D_W(x,r) $. We claim that \[ F_n(D_W(x,r))\subset D_W(F_n(x), C^{n_0}\cdot r)\subset D_W(F_n(x), r/2).\] According to Lemma \ref{lemma-hyperbolic-metric}, it is enough to see that \[\#\left\lbrace j=1,\dots, n\colon F_j(x)\in D_W(x,2r)\right\rbrace \geq n_0.\] Indeed, for all $ j\in \left\lbrace 1,\dots, n\right\rbrace  $, we have that \[F_j(D_W(x,r))\subset D_W(F_j(x),r).\] Moreover, for all $  j\in \left\lbrace 1,\dots, n\right\rbrace   $  such that $ G_j(\zeta)\in I $ (what happens at least $ n_0 $ times), there exists $ \eta\in K $ such that  \[  \emptyset\neq G_j (R_\rho (\zeta))\cap \Delta_\rho (\eta)\ni z. \] Thus,  $ w=g^j(z)\in \Delta_\rho (\zeta), G_j(w)=z\in \Delta_\rho (\eta) $, so $$ \varphi(w), \varphi(G_j(w))=F_j(\varphi(w))\in D_W(x, r/4) .$$ Therefore, by the triangle inequality, \[\dist_W(F_j(x), x)\leq \dist_W(F_j(x), F_j(\varphi(w)))+\dist_W(F_j(\varphi(w)), x)\leq 2r,\] as desired.

Next we claim that \[\overline{F_n(D_W(x,r))}\subset D_W(x,r).\] Indeed, recall that $ G_n $ has been chosen so that $ G_n(\zeta)\in I $. Therefore, there exists $ \eta\in K $ such that  \[  \emptyset\neq G_n( R_\rho (\zeta))\cap \Delta_\rho (\eta)\ni z, \] and, as above, $ w=g^n(z)\in \Delta_\rho (\zeta),G_n(w)=z\in \Delta_\rho (\eta) $, so $$ \varphi(w), \varphi(G_n(w))=F_n(\varphi(w))\in D_W(x, r/4) .$$
Note that $ \Delta_\rho (\eta)\cap \Delta_\rho (\zeta)\neq \emptyset $.

 Since we just proved that $ F_n (D_W(x,r)) \subset D_W(F_n(x), r/2)$, we have \[\dist_W(F_n(x), x)\leq \dist_W(F_n(x), F_n(\varphi(w)))+\dist_W(F_n(\varphi(w)), x)\leq \frac{r}{2}+\frac{r}{4}=\frac{3r}{4}.\] Therefore, $ F_n (D_W(x,r)) \subset D_W(x, 3r/4)$, implying the claim.

See Figure \ref{fig-puntperiodic}.

\begin{figure}[htb!]\centering
	\includegraphics[width=16cm]{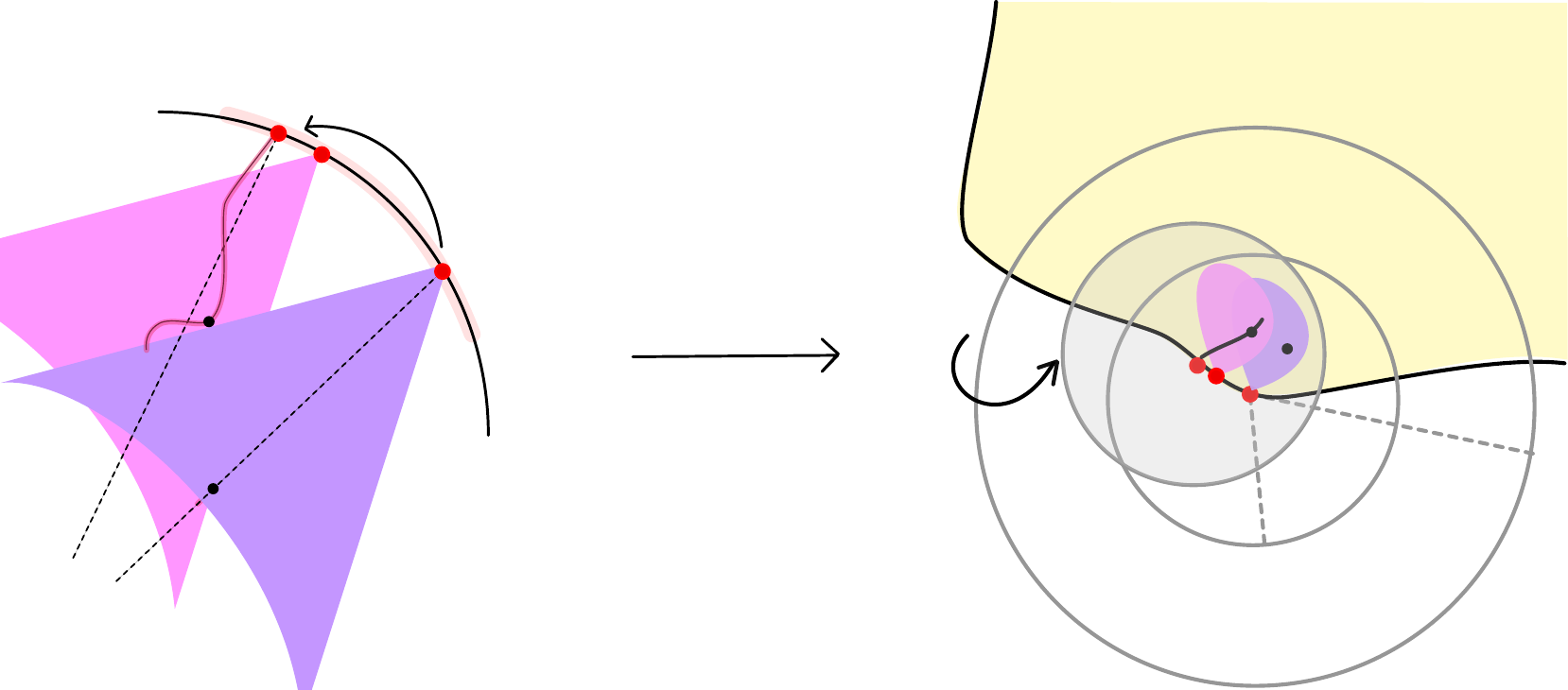}
	\setlength{\unitlength}{16cm}
	\setlength{\unitlength}{16cm}
	\put(-0.71, 0.27){$\zeta$}
	\put(-0.75, 0.35){$G_n$}
	\put(-0.375, 0.16){$F_n$}
	\put(-0.53, 0.225){$\varphi$}
	\put(-0.7, 0.14){$\partial\mathbb{D}$}
	\put(-0.4, 0.42){$\partial U$}
	\put(-0.84, 0.365){$G_n(\zeta)$}
		\put(-0.8, 0.32){$\eta$}
	\put(-0.22, 0.18){\small$x$}
	\put(-0.3, 0.2){\small$F_n(x)$}
		\put(-0.855, 0.125){\small$w$}
	\put(-0.865, 0.22){\small$G_n(w)$}
	\put(-0.08, 0.145){$r$}
	\put(-0.19, 0.105){\small $r/4$}
	\caption{\footnotesize Set-up of the proof of the existence of a periodic point in $ D(x,r) $, for $ x=\varphi^*(\zeta) \in\mathbb{C}$, $ \zeta\in\mathcal{J}(g) $. }\label{fig-puntperiodic}
\end{figure}

Finally, by the Brouwer fixed point theorem, $ F_n $ has a fixed point $ p\in D_W(x,r) $. We claim that $ p $ is accessible from $ U $, and thus $ p\in\partial U $. Indeed, let $ \gamma $ be a curve connecting $ w $ and $ G_n(w) $ in $ \Delta_\rho(\eta )\cup \Delta_\rho(\zeta )$, which exists because $ \Delta_\rho (\eta)\cap \Delta_\rho (\zeta)\neq \emptyset $.  Then, $ \varphi (\gamma) $ joins $ \varphi (w) $ and $ F_n(\varphi(w)) $ in $ U\cap D_W(x,r) $. The curve \[\Gamma=\bigcup_{m\geq 0} F^m_n (\varphi (\gamma))\] lands at $ p $, ending the proof of the lemma.
\end{proof}

	 	\vspace{0.4cm}
\noindent	(3) {\em Distribution of periodic points.}
		Statement (a) follows straightforward from Lemma \ref{lemma-construction-periodic-point}, taking into account that the preimages of $ \xi\in\partial \mathbb{D} $ are dense in $ \mathcal{J}(g) $, and, if $ \zeta= G_n(\xi) $, then $ \varphi^*(\zeta) $ exists and it belongs to $ W $. Indeed, this follows from the commutative relation $ \varphi\circ G_n=F_n\circ \varphi $ in any sufficiently small Stolz angle at $ \xi $. Then, for the radial segment $ R(\xi) $, we have $$ F_n(\varphi(R(\xi)))=\varphi(G_n(R(\xi))) .$$ By Lindelöf Theorem \cite[Thm. I.2.2]{CarlesonGamelin}, this already implies that $ \varphi^*(\zeta) $ exists (and belongs to $ W $ by the backward invariance of $ W $).   Therefore, one can apply Lemma \ref{lemma-construction-periodic-point} to $ \zeta= G_n(\xi) $ and get the desired result.
		
		If $ \mathcal{J}(g)=\partial \mathbb{D} $ and $ \omega_U(P(f))=0 $, note that one can apply Lemma \ref{lemma-construction-periodic-point} to $ \lambda $-almost every point on $ \partial\mathbb{D} $.
		
			 	\vspace{0.4cm}
		\noindent	(4) {\em Accessible points with prescribed orbit.}
		We find the desired point inductively. Indeed, let $ x_0=\varphi^*(\zeta_0) \in W$ as in Lemma \ref{lemma-construction-periodic-point}, there exists an inverse branch $ F_{n_1} $, well-defined in $ D_W(x_0, r_0) $ such that \begin{enumerate}
			\item $ G_{n_1} (\zeta_0)=\zeta_{n_1}$, $ F_{n_1}(x_0)=\varphi^*(\zeta_{n_1})=x_{n_1}\in W\cap\partial U $;
			\item $F_{n_1}(D(x_0,r_0))\subset D_W(x_{n_1}, r_1) \subset D_W(x_0, r_0)$, $ r_1=r_0/2 $;
			\item at least one of the sets $ \left\lbrace D_W(x_0,r_0), F_1(D_W(x_0,r_0)), \dots, F_{n_1}(D_W(x_0,r_0)) \right\rbrace $ intersects $ \overline{\varphi(N_{\xi_1}) }$.
		\end{enumerate}

		Since $ x_{n_1} =\varphi^*(\zeta_{n_1})\in W\cap\partial U$, we can apply again Lemma \ref{lemma-construction-periodic-point} to find an inverse branch $ F_{n_2} $, well-defined in $ D_W(x_{n_1}, r_1) $,  satisfying the analogous properties. Therefore, proceeding inductively, we get a sequence of inverse branches $ \left\lbrace F_{n_k}\right\rbrace _k $ and points $ \left\lbrace \zeta_{n_k}\right\rbrace _k\subset\partial\mathbb{D} $, $ \left\lbrace x_{n_k}\right\rbrace _k\subset \partial U $ satisfying that the inverse branch $ F_{n_k} $, well-defined in $ D_W(x_{n_{k-1}}, r_{k-1}) $ and \begin{enumerate}
			\item $ G_{n_k} (\zeta_{n_{k-1}})=\zeta_{n_k}$, $ F_{n_k}(x_{k-1})=\varphi^*(\zeta_{n_k})=x_{n_k}\in W\cap\partial U $;
			\item $F_{n_k}(D(x_{n_{k-1}},r_{k-1}))\subset D_W(x_{n_k}, r_k) \subset D_W(x_{n_{k-1}},r_{k-1})$, $ r_k=r_{k-1}/2 $;
			\item at least one of the sets $$ \left\lbrace D_W(x_{n_{k-1}},r_{k-1}), F_1(D_W(x_{n_{k-1}},r_{k-1})), \dots, F_{n_k}(D_W(x_{n_{k-1}},r_{{k-1}})) \right\rbrace $$ intersects $ \overline{(N_{\xi_{n_k}})} $.
		\end{enumerate}
	 
	 See Figure \ref{fig-bungee}.
	 	\begin{figure}[htb!]\centering
	 	\includegraphics[width=15cm]{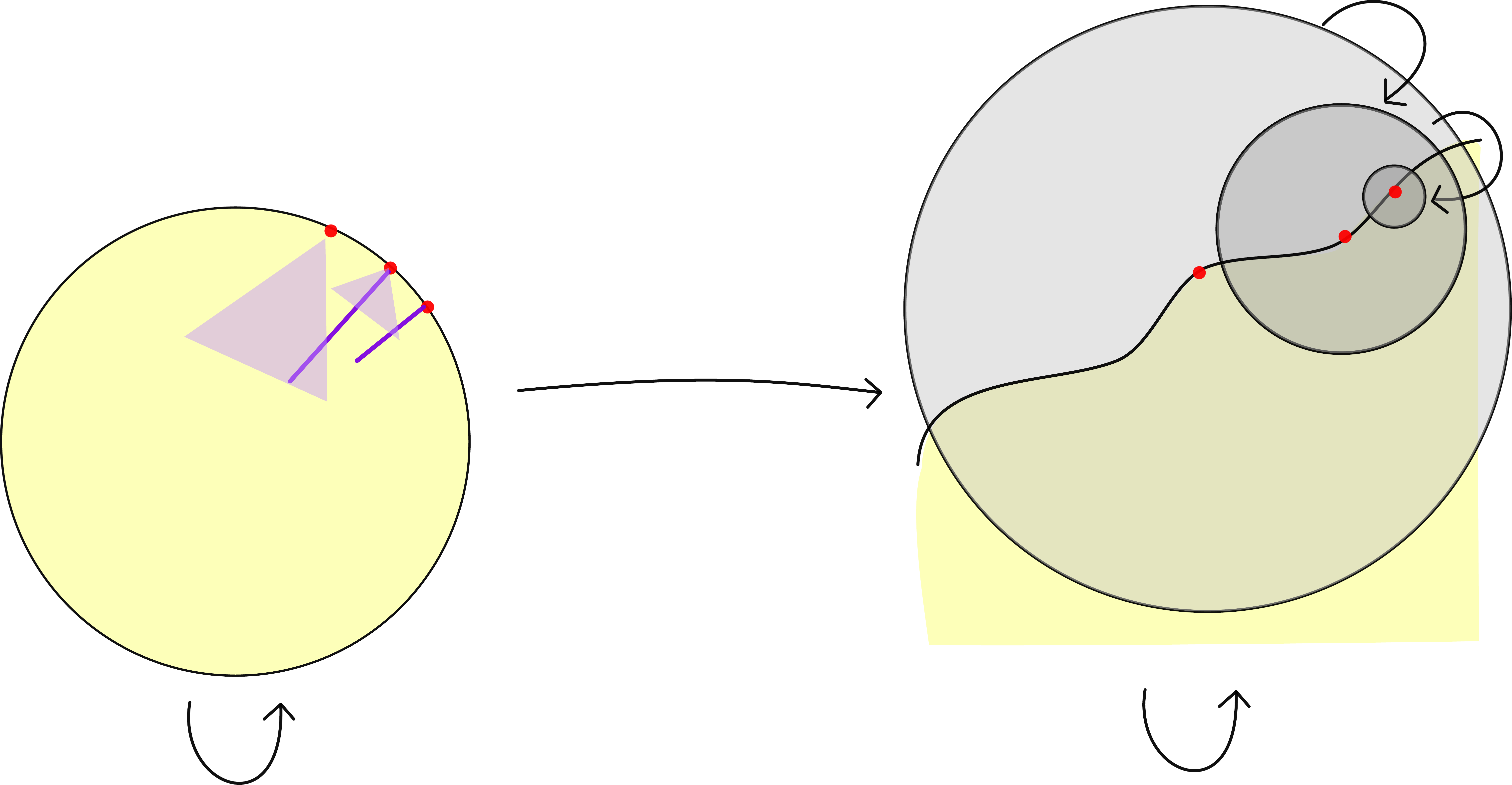}
	 	\setlength{\unitlength}{15cm}
	 	\put(-0.85, -0.02){$g$}
	 	\put(-0.22, -0.01){$f$}
	 	\put(-0.56, 0.28){$\varphi$}
	 	\put(-0.97, 0.35){$\mathbb{D}$}
	 	\put(-0.04, 0.1){$ U$}
	 	\put(-0.23, 0.33){\small$x_0$}
	 		\put(-0.06, 0.5){$F_{n_1}$}
	 	\put(-0.01, 0.4){$F_{n_2}$}
	\put(-0.13, 0.34){\small$x_{n_1}$}
		\put(-0.08, 0.37){\small$x_{n_2}$}
	 	\put(-0.735, 0.34){\small$\zeta_{n_1}$}
	 	\put(-0.71, 0.31){\small$\zeta_{n_2}$}
	 	\put(-0.79, 0.38){\small$\zeta_0$}
	 	\caption{\footnotesize Set-up of the proof of the existence of an accessible point in $ \partial U $ with a prescribed orbit.}\label{fig-bungee}
	 \end{figure}
 
		Let \[x_*\coloneqq \bigcap_k F_{n_k}(D(x_0, r_0)).\]Note that $ x_* \in\partial U$, since $ x_{n_k}\to x_* $ (and $ \partial U $ is closed).
		
		It is left to prove that $ x_* $ is accessible from $ U $. To do so, note that, by the construction in Lemma \ref{lemma-construction-periodic-point}, at step $ k $, it is satisfied that there exists a measurable set $ K_k $ and $ \eta_k \in K_k$ (chosen as in the Claim \ref{claim}) such that \[\Delta_{\rho_k}(\zeta_{n_{k-1}})\cap R_{\rho_k}(\zeta_{n_{k}})\neq \emptyset;\]
		$$ G_{n_k}(R_{\rho_k}(\zeta_{n_{k-1}}))\subset \Delta_{\rho_k} (\zeta_{n_k}); $$
			$$ G_{n_k}(R_{\rho_k}(\zeta_{n_{k-1}}))\cap  \Delta_{\rho_k} (\eta_k)\neq \emptyset; $$
		\[ \varphi(\Delta_{\rho_k}(\zeta_{n_{k-1}})), \varphi(\Delta_{\rho_k}(\eta_k))\subset D_W(x_{n_{k-1}}, r_{k-1}/4);\] for some appropriate radius $ \rho_k$. Without loss of generality, we shall assume that the sequence $ \left\lbrace \rho_k\right\rbrace _k $ is decreasing.
		
		 Thus, for all $ n\geq 0 $, take $$ z_{n_k}\in \Delta_{\rho_k}(\zeta_{n_{k}}) \cap R_{\rho_k}(\zeta_{n_{k-1}}).$$  
		 	We claim that there exists  a curve  $ \gamma_{k} $ connecting $ \varphi(z_{n_{k}}) $ to $ \varphi(z_{n_{k+1}})$ in $ D_W(x_{n_{k-1}}, r_{k-1}) \cap U$. Indeed,  since $ z_{n_k}\in \Delta_{\rho_k}(\zeta_{n_{k-1}}) $ and $ z_{n_{k+1}}\in \Delta_{\rho_{k+1}}(\zeta_{n_{k}}) $, if we prove that the set
		 	\[\Delta_{\rho_k}(\zeta_{n_{k-1}})\cup \Delta_{\rho_k}(\eta_k)\cup  G_{n_k}(\Delta_{\rho_k}(\zeta_{n_{k-1}}))\cup \Delta_{\rho_{k+1}}(\zeta_{n_{k}})\] is connected, since \[\varphi(\Delta_{\rho_k}(\zeta_{n_{k-1}})), \varphi(\Delta_{\rho_k}(\eta_k)) \subset D_W(x_{n_{k-1}}, r_{k-1}),\]
		 	\[\varphi(G_{n_k}(\Delta_{\rho_k}(\zeta_{n_{k-1}})))=F_{n_k}(\varphi(\Delta_{\rho_k}(\zeta_{n_{k-1}})))\subset F_{n_k}(D_W(x_{n_{k-1}}, r_{k-1}))\subset D_W(x_{n_{k-1}}, r_{k-1}),\]
		 	\[\varphi(\Delta_{\rho_{k+1}}(\zeta_{n_k}))\subset D(x_{n_k}, r_k)\subset D(x_{n_{k-1}, r_{k-1}}),\] then the existence of the curve $ \gamma_k $ follows straightforward.
		 	
		 	To see that $ \Delta_{\rho_k}(\zeta_{n_{k-1}})\cup \Delta_{\rho_k}(\eta_k)\cup G_n(\Delta_{\rho_k}(\zeta_{n_{k-1}}))\cup \Delta_{\rho_{k+1}}(\zeta_{n_{k}}) $ is connected, note that, on the one hand, $  \Delta_{\rho_k}(\zeta_{n_{k-1}})\cap \Delta_{\rho_k}(\eta_k)\neq\emptyset$, by the Claim \ref{claim} in Lemma \ref{lemma-construction-periodic-point}. Then, $ G_{n_k}(R_{\rho_k}(\zeta_{n_{k-1}}))\cap  \Delta_{\rho_k} (\eta_k)\neq \emptyset $, by the choice of $ \eta_k $. On the other hand, \[ R_{\rho_k}(\zeta_{n_{k-1}})\subset \Delta_{\rho_k}(\zeta_{n_{k-1}}),\]
		  and,
		 	according to Proposition \ref{prop-radial-limits}, $ G_n(R_{\rho_k}(\zeta_{n_{k-1}})) $ is a curve landing at $ \zeta_{n_{k}} $ inside $ \Delta_{\rho_k}(\zeta_{n_{k}}) $, and hence intersecting $ \Delta_{\rho_{k+1}}(\zeta_{n_{k}}) $. Hence, the intersection between $ G_n(\Delta_{\rho_k}(\zeta_{n_{k-1}})) $ and $ \Delta_{\rho_{k+1}}(\zeta_{n_{k}}) $  is non-empty. This proves the claim.

	  Finally, \[\Gamma=\bigcup_{k\geq 0} \gamma_{k}\] is an access to $ x_* $, as desired.
		 
		 	\vspace{0.4cm}
	\noindent	(5) {\em The case of infinite degree.}
		 It is left to show that, in the case of infinite degree, the set of periodic points in $ \partial U $ is unbounded and $ x_* $ can be taken bungee.
		 
		  This follows from the fact that the crosscut neighbourhood $ N_\zeta $ with $ \overline{\varphi(N_\zeta )} \cap P(f)=\emptyset$ has countably many preimages, and, for every compact subset $ K $ of $ \mathbb{C} $, only finitely many of them intersect $ K $. Hence, in the choices of crosscuts in Lemma \ref{lemma-construction-periodic-point}, we can construct a periodic point  which is outside $ D(0, n) $, leading to an unbounded sequence of periodic points. The same can be done to show that $ x_* $ can be taken to be bungee. This ends the proof of the theorem.

\end{proof}

\bibliographystyle{amsalpha}
{\footnotesize\bibliography{ref}}

\end{document}